\definecolor{colour1}{rgb}{0.79216,0.12156,0.7}
\definecolor{colour2}{rgb}{0.8,0.5,0.9}
\definecolor{colour3}{rgb}{0.6,0.8,0.7}
\newcommand{\tikzline}[2]{\tikz[baseline=-0.5ex]\draw[colour1] (0,0) -- (0.6, 0);}
\numberwithin{equation}{section}
\crefname{equation}{Eq.}{Eqs.}
\crefname{eqnarray}{Eq.}{Eqs.}
\crefname{algo}{Algorithm}{Algorithms}
\crefname{conj}{Conjecture}{Conjectures}
\crefname{lem}{Lemma}{Lemmas}
\crefname{thm}{Theorem}{Theorems}
\crefname{claim}{Claim}{Claims}
\crefname{rmk}{Remark}{Remarks}
\crefname{prop}{Proposition}{Propositions}
\crefname{section}{Section}{Sections}
\crefname{appendix}{Appendix}{Appendices}
\crefname{cor}{Corollary}{Corollaries}
\crefname{figure}{Figure}{Figures}
\crefname{table}{Table}{Tables}
\crefname{example}{Example}{Examples}
\crefname{prob}{Problem}{Problems}
\crefname{assm}{Assumption}{Assumptions}
\crefname{defn}{Definition}{Definitions}
\DeclareMathOperator{\tr}{Tr}
\newcommand{\de}{{\partial}}
\newcommand{\rd}{\mathrm{d}}
\newcommand{\re}{\mathrm{e}}
\newcommand{\bbV}{\mathbb{V}}
\newcommand{\bbA}{\mathbb{A}}
\newcommand{\bbN}{\mathbb{N}}
\newcommand{\bbZ}{\mathbb{Z}}
\newcommand{\bbR}{\mathbb{R}}
\newcommand{\bbC}{\mathbb{C}}
\newcommand{\bbP}{\mathbb{P}}
\newcommand{\bbQ}{\mathbb{Q}}
\def\bary{\begin{array}} 
\def\eary{\end{array}} 
\def\ben{\begin{enumerate}} 
\def\een{\end{enumerate}}
\def\bit{\begin{itemize}} 
\def\eit{\end{itemize}}
\def\nn{\nonumber} 
\newcommand{\cO}{\mathcal{O}}
\newcommand{\cT}{\mathcal{T}}
\newcommand{\cP}{\mathcal{P}}
\newcommand{\cC}{\mathcal{C}}
\newcommand{\LL}{\mathcal{L}}
\newcommand{\cK}{\mathcal{K}}
\newcommand{\cN}{\mathcal{N}}
\newcommand{\cW}{\mathcal{W}}
\newcommand{\cG}{\mathcal{G}}
\newcommand{\cA}{\mathcal{A}}
\newcommand{\HH}{\mathcal{H}}
\newcommand{\cI}{\mathcal{I}}
\newcommand{\RR}{\mathcal{R}}
\newcommand{\cV}{\mathcal{V}}
\newcommand{\cM}{\mathcal M}
\newcommand{\cQ}{\mathcal Q}
\def\beq{\begin{equation}}                     %
\def\eeq{\end{equation}}                       %
\def\bea{\begin{eqnarray}}                     
\def\eea{\end{eqnarray}}
\def\bary{\begin{array}} 
\def\eary{\end{array}} 
\def\ben{\begin{enumerate}} 
\def\een{\end{enumerate}}
\def\bit{\begin{itemize}} 
\def\eit{\end{itemize}}
\def\nn{\nonumber} 
\def\de {\partial}
\def\a{\alpha}
\def\b{\beta}
\def\d{\delta}
\def\Res{\mathrm{Res}}
\theoremstyle{plain}
\newtheorem{thm}{Theorem}[section]
\newtheorem{lem}[thm]{Lemma}
\newtheorem{prop}[thm]{Proposition}
\newtheorem*{conj*}{Conjecture}
\newtheorem{cor}[thm]{Corollary}
\newtheorem*{cor*}{Corollary}
\theoremstyle{definition}
\newtheorem*{rem*}{Remark}
\newtheorem*{rems*}{Remarks}
\newtheorem{rmk}[thm]{Remark}
\newtheorem{example}{Example}[section]
\newcommand{\GITl}[1]{\backslash \!\! \backslash _{\kern-.2em #1 \kern0.1em}}
\newcommand{\GIT}[1]{/\!\!/_{\kern-.2em #1 \kern0.1em}}
\renewcommand{\l}{\left}
\renewcommand{\r}{\right}
\newcommand{\bra}{\left\langle}
\newcommand{\ket}{\right\rangle}
\def\bred{\begin{color}{red}}
\def\ered{\end{color}}
\def\bes{\begin{subequations}}
\def\ees{\end{subequations}}
\newcommand\Tr{\operatorname{Tr}}
\newtheorem{theorem}{Theorem}[section]
\newtheorem{lemma-definition}[theorem]{Lemma-Definition}
\theoremstyle{definition}
\newtheorem{defn}[theorem]{Definition}
\theoremstyle{remark}
\numberwithin{equation}{section}
\numberwithin{figure}{section}
\renewcommand {\max} {{\operatorname{max}}}
\newcommand {\ord}  {\operatorname{ord}}
\newcommand {\PGL}  {\operatorname{PGL}}
\def\mydate{\ifcase\month \or January\or February\or March\or
April\or May\or June\or July\or August\or September\or October\or 
November\or December\fi \space\number\day,\space\number\year}
\tikzset{node distance=2em, ch/.style={circle,draw,on chain,inner sep=2pt},chj/.style={ch,join},every path/.style={shorten >=4pt,shorten <=4pt},line width=1pt,baseline=-1ex}
\newsavebox\CBox
\newcommand\hcancel[2][1pt]{%
  \ifmmode\sbox\CBox{$#2$}\else\sbox\CBox{#2}\fi%
  \makebox[0pt][l]{\usebox\CBox}%
  \rule[0.5\ht\CBox-#1/2]{\wd\CBox}{#1}}
\begin{document}

\title{
  Mirror symmetry for extended affine Weyl groups
  }

\author{Andrea Brini}
\address{\tiny School of Mathematics and Statistics,
University of Sheffield, S11 9DW, Sheffield, United Kingdom. \\ on leave from CNRS, DR 13, Montpellier, France.
}
\email{a.brini@sheffield.ac.uk}
\author{Karoline van Gemst}
\address{\tiny School of Mathematics and Statistics,
University of Sheffield, S11 9DW, Sheffield, United Kingdom
}
\email{kvangemst1@sheffield.ac.uk}

\thanks{This project has been supported by the Engineering and Physical Sciences
  Research Council under grant agreement ref.~EP/S003657/2.}

\begin{abstract}
We give a uniform, Lie-theoretic mirror symmetry construction for the Frobenius manifolds defined by Dubrovin--Zhang in \cite{MR1606165} on the orbit spaces of extended affine Weyl groups, including exceptional Dynkin types. The B-model mirror is given by a one-dimensional Landau--Ginzburg superpotential constructed from a suitable degeneration of the family of spectral curves of the affine relativistic Toda chain for the corresponding affine Poisson--Lie group. As applications of our mirror theorem we
give closed-form expressions for the flat coordinates of the Saito metric and the Frobenius prepotentials in all Dynkin types, compute the topological degree of the Lyashko--Looijenga mapping for certain higher genus Hurwitz space strata, and construct hydrodynamic bihamiltonian hierarchies (in both Lax--Sato and Hamiltonian form) that are root-theoretic generalisations of the long-wave limit of the extended Toda hierarchy. 

\end{abstract}

\maketitle
\setcounter{tocdepth}{1}
\tableofcontents

\section{Introduction}
\label{Section:intro}

Frobenius manifolds, introduced by B.~Dubrovin in \cite{Dubrovin:1994hc} as a coordinate free formulation of the Witten--Dijkgraaf--Verlinde--Verlinde (WDVV) equations of 2D topological field theory, have sat for a good quarter of a century at a key point of confluence of algebraic geometry, singularity theory, quantum field theory, and the theory of integrable systems. In algebraic geometry, they serve as a model for the quantum co-homology (genus zero Gromov--Witten theory) of smooth projective varieties; in singularity theory, they encode the existence of pencils of flat pairings on the base of the mini-versal deformations of hypersurface singularities; in physics, they codify the associativity of the chiral ring of topologically twisted $\cN=(2,2)$ supersymmetric field theories in two dimensions; and in the theory of integrable hierarchies, they provide a loop-space formulation of hydrodynamic bihamiltonian integrable hierarchies in 1+1 dimensions. 

On top of the physics-inspired examples of Frobenius manifolds coming from Witten's topological A- and B-twists of 2D-theories with four supercharges, an interesting source of Frobenius manifolds is well-known to arise in Lie theory. Let $\mathfrak{g}_\RR$ be a simple complex Lie algebra associated to an irreducible root system  $\RR$, and write $\mathfrak{h}_\RR$ and $\mathfrak{g}^{(1)}_\RR$ for, respectively, its Cartan subalgebra and the associated untwisted affine Lie algebra. In a celebrated result \cite{Dubrovin:1993nt}, Dubrovin constructed a class of semi-simple polynomial Frobenius manifolds on the space of regular orbits of the reflection representation of $\mathrm{Weyl}(\mathfrak{g}_\RR)$ (and in fact, on the orbit spaces of the defining representation of any Coxeter group). A remarkable extension of this was provided by Dubrovin and Zhang \cite{MR1606165}, who defined a Frobenius manifold structure $\cM^{\rm DZ}_{\RR}$ on quotients of $\mathfrak{h}_\RR\times \bbC$ by a suitable semi-direct product $\mathrm{Weyl}(\mathfrak{g}_\RR^{(1)}){\ltimes}  \bbZ$. They furthermore provided a mirror symmetry construction for Dynkin type~A, $\mathfrak{g}_{A_{N-1}}=\mathrm{sl}_N(\bbC)$, in terms of Laurent-polynomial one-dimensional Landau--Ginzburg models, which was later generalised to classical Lie algebras in \cite{Dubrovin:2015wdx}.  A question raised by \cite{MR1606165,Dubrovin:2015wdx} was whether a similar uniform mirror symmetry construction for all Dynkin types could be established, including exceptional Lie algebras.

This paper gives a constructive Lie-theoretic answer to this question, which is furthermore entirely explicit, and provides closed-form expressions for the flat coordinates of the analogue of the Saito--Sekiguchi--Yano metric and for the Frobenius prepotential. Our mirror theorem has simultaneous implications for singularity theory, integrable systems, the Gromov--Witten theory of Fano orbicurves, and Seiberg--Witten theory, some of which are explored here.

\subsection{Main results}

\subsubsection{Mirror symmetry for Dubrovin--Zhang Frobenius manifolds}

Our main result is the following general mirror theorem for Dubrovin--Zhang Frobenius manifolds (see \cref{thm:mirror} for the complete statement, and \cref{tab:notation} for details of the notation employed). Let $H_{g,\mathsf{n}}$ be the Hurwitz space of isomorphism classes $[\lambda: C_g \to \bbP^1]$ of covers of the complex line by a genus $g$ curve $C_g$ with ramification profile at infinity described by $\mathsf{n} \in (\bbZ_{\geq 0})^{\ell(\mathsf{n})}$, $\ell(\mathsf{n}) \geq 1$. Fixing a suitable meromorphic function $\mu$ on $H_{g,\mathsf{n}}$ 
induces, as a particular case of a classical construction of Dubrovin \cite{Dubrovin:1992eu,Dubrovin:1994hc}, a semi-simple Frobenius manifold structure  $\HH^{[\mu]}_{g,\mathsf{n}}$ on $H_{g,\mathsf{n}}$. 

\begin{thm}[=\cref{thm:mirror}]
For any simple Dynkin type $\RR$ there exists a highest weight $\omega$ for the corresponding simple Lie algebra $\mathfrak{g}$, pairs of integers $(g_{\omega}, \mathsf{n}_{\omega})$, and an explicit embedding $\iota_{\omega} : \cM^{\rm DZ}_{\RR} \hookrightarrow \HH^{[\mu]}_{g_{\omega},\mathsf{n}_{\omega}}$, 
such that $\iota_\omega$ is a Frobenius manifold isomorphism onto its image $\cM_\omega^{\rm LG} \coloneqq \iota_\omega(\cM^{\rm DZ}_\RR)$.
\label{thm:main_0}
\end{thm}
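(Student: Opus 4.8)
The plan is to build $\iota_\omega$ out of the spectral geometry of the affine relativistic Toda chain and then to certify the Frobenius isomorphism by identifying the two flat pencils of metrics rather than grinding through the products. First I would fix for each type $\RR$ a highest weight $\omega$ and form the spectral curve $\{P_\omega(\mu,\lambda)=0\}$ of the Toda Lax operator $L(\lambda)$ in the representation $V_\omega$, with $P_\omega(\mu,\lambda)=\det_{V_\omega}\bigl(\mu-L(\lambda)\bigr)$. Its coefficients in $\mu$ are, up to the spectral parameter, built from the fundamental characters $\chi_{\omega_i}$ together with the affine (loop) modulus; the ``suitable degeneration'' consists in freezing that modulus, so that the surviving coefficients are exactly invariant coordinates on $\mathfrak h_\RR\times\bbC$ descending to $\cM^{\rm DZ}_\RR$. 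Reading the Newton polygon of $P_\omega$ off the marks of the affine Dynkin diagram yields the ramification profile $\mathsf n_\omega$ over $[1:0]$ and, by Riemann--Hurwitz, the genus $g_\omega$, giving a well-defined family of covers $\lambda\colon C\to\bbP^1$ and hence a map $\iota_\omega\colon\cM^{\rm DZ}_\RR\to\HH_{g_\omega,\mathsf n_\omega}$.

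Next I would reduce the isomorphism claim to matching geometric data. Since a Frobenius structure is equivalent to its flat pencil $(\eta,g)$ of Saito metric and intersection form, it suffices to show that $\iota_\omega$ pulls the Dubrovin pencil on $\HH^\phi_{g_\omega,\mathsf n_\omega}$ back to the Dubrovin--Zhang pencil. That $\iota_\omega$ is an open embedding follows from a dimension count, $\dim\cM^{\rm DZ}_\RR=\rk\mathfrak g+1=\dim\HH_{g_\omega,\mathsf n_\omega}$, together with the fact that $\mathbf x\mapsto(\chi_{\omega_1}(\mathbf x),\dots,\chi_{\omega_n}(\mathbf x))$ separates regular orbits of the extended affine Weyl group. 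The intersection form is the easy half: the Dubrovin intersection form is the residue pairing built from the Seiberg--Witten differential, and on the spectral curve it collapses to the invariant form on $\mathfrak h_\RR\times\bbC$, which is by construction the Dubrovin--Zhang intersection form.

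The substantive half is the Saito metric, and here I would take the third-kind differential $\phi=\log\mu\,\rd\lambda$ (the Seiberg--Witten differential, normalised to have simple poles over $[1:0]$) as Dubrovin's primary differential and compute its flat coordinates explicitly. These coordinates are prescribed principal parts, residues, and periods of $\phi$; the computation I would carry out expands $\log\mu$ near the punctures, where the exponents are controlled by the affine marks, and shows in closed form that the resulting data reproduce the characters $\chi_{\omega_i}$ and the scaling variable, i.e. the Dubrovin--Zhang flat coordinates. The $\bbC^*$-homogeneity of $\lambda$ simultaneously identifies the unit and Euler fields and pins the constant Gram matrix of $\eta$ to the correct one. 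With both $\eta$ and $g$ matched and $E,e$ matched, reconstruction of the Frobenius product from the flat pencil forces $\iota_\omega$ to be a Frobenius isomorphism onto $\cM^{\rm LG}_\omega$.

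The hard part will be exactly this period computation, carried out uniformly across all Dynkin types. There is no convenient linear model of $V_\omega$ for exceptional $\mathfrak g$, so the evaluation of the residues and $\mathsf A$-periods of $\phi$ on a genus-$g_\omega$ curve cannot be done by bare-hands symmetric-function manipulations; instead it must exploit the factorisation of $P_\omega$ dictated by the weight multiplicities of $V_\omega$, the local behaviour of $\lambda$ at the punctures governed by the marks, and the compatibility of the $\bbC^*$-grading with the exponents of $\RR$. Showing type-independently that these periods are precisely the fundamental characters---and that the Gram matrix of $\eta$ comes out constant of the correct signature---is the crux on which both the embedding and the isomorphism ultimately rest.
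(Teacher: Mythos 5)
Your first paragraph (spectral curve of the Toda Lax operator, Newton polygon giving $\mathsf n_\omega$ and $g_\omega$) and your treatment of the intersection form and of $e$, $E$ follow essentially the same route as the paper, but two later steps are genuinely broken. First, the dimension count you invoke to get an open embedding is false outside type $A$: the parent Hurwitz spaces have dimensions $2l+1$, $2l$, $2l+2$ for $\RR=B_l,C_l,D_l$, and $42$, $130$, $518$, $36$, $7$ for $E_6$, $E_7$, $E_8$, $F_4$, $G_2$, all strictly larger than $l_\RR+1$. So $\iota_\omega$ lands in a \emph{proper} stratum, and you must prove that the restriction of the Hurwitz residue pairing \eqref{Def:etares} to that stratum is still flat and non-degenerate; this is not automatic for a submanifold, and it can actually fail --- Section 4.4 of the paper exhibits a singular $\tilde\eta$ when one shifts a non-canonical character in the same construction. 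The paper supplies this step through \cref{Thm:5.1}: the stratum is affine-linear in Dubrovin's flat coordinates on the ambient Hurwitz space (the fixed locus of an involution acting linearly for classical types, a direct Puiseux computation for the exceptional ones). Your proposal has nothing in place of this.

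Second, your plan for matching the Saito metric is circular as stated. You propose to compute the flat coordinates of the LG metric $\eta$ and check that they ``reproduce the characters $\chi_{\omega_i}$\dots, i.e.\ the Dubrovin--Zhang flat coordinates''; but no such coordinates exist a priori on the DZ side --- the DZ construction defines $\eta$ only abstractly as $L_e\gamma$, and producing its flat coordinates is a \emph{consequence} of the mirror theorem in the paper, not an available input (moreover the characters themselves are certainly not $\eta$-flat). There are two ways to close this gap: (i) observe that once $\gamma$ and $e$ are matched, $\eta^{-1}=L_e\gamma^{-1}$ is matched for free, after which semisimplicity recovers the product from the pencil as you intend; or (ii) do what the paper does and invoke the Dubrovin--Zhang reconstruction theorem (\cref{Thm:DZ}), whose remaining hypothesis --- quasi-polynomiality of the prepotential --- is the hardest verification in the paper's proof: one shows the ramification-point contributions to $\tilde c_{0ij}$ vanish, so these components are computable by residues at the poles of $\lambda$ alone, and then the WDVV equations with one index set to $0$ become an inhomogeneous linear system of maximal rank determining the rest. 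Your proposal contains neither of these, so the final ``reconstruction from the flat pencil'' step does not go through as written.
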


In other words, $\iota_{\omega}$ identifies the Frobenius manifold $\cM^{\rm DZ}_{\RR}$ with a distinguished stratum $\cM^{\rm LG}_\omega$ of a Hurwitz space, which is an affine-linear subspace of the Frobenius manifold $\HH^{[\mu]}_{g_{\omega},\mathsf{n}_{\omega}}$ in its natural set of flat coordinates. The datum of the covering map on $H_{g_{\omega},\mathsf{n}_{\omega}}$ defines a one-dimensional B-model superpotential for $\cM^{\rm DZ}_{\RR}$ in terms of a family of (trigonometric) meromorphic functions $\cM^{\rm LG}_\omega$, whose Landau--Ginzburg residue formulas determine the Dubrovin--Zhang flat pencil of metrics and the Frobenius product structure on $T\cM^{\rm DZ}_{\RR}$.\\

\cref{thm:main_0} is proved in two main steps. Fixing $\omega$ a dominant weight in a minimal non-trivial Weyl orbit, we first associate to $\cM^{\rm DZ}_{\RR}$ a family of spectral curves specified by the vanishing of the characteristic polynomial in the representation $\rho_\omega$ for a pencil of group elements $\mathsf{g}(\lambda) \in \cG:=
\exp\mathfrak{g}$. The construction of the family hinges on determining all character relations of the form $\chi_{\wedge^k \rho_{\omega}} =\mathfrak{p}^\omega_k(\chi_{\rho_{1}},\dots, \chi_{\rho_{l_{\RR}}})$ in the Weyl character ring of $\cG$, where $\rho_i$ is the $i^{\rm th}$ fundamental representation of $\cG$, and $l_\RR$ is the rank of $\RR$.
The resulting family sweeps a submanifold of a Hurwitz space $H_{g_\omega, \mathsf{n}_\omega}$: the second step consists in establishing that this is a Frobenius submanifold of the Frobenius manifold $\HH^{[\mu]}_{g_\omega, \mathsf{n}_\omega}$ satisfying the defining properties of $\cM_\RR^{\rm DZ}$.\\
 
 Our construction is motivated by a conjectural relation of the almost-dual Frobenius manifold \cite{MR2070050} for $\RR$ of type ADE with the orbifold quantum co-homology of the associated simple surface singularity, as proposed in \cite{Brini:2013zsa, Brini:2017gfi}, which is in turn described by a degeneration of a family of spectral curves for the relativistic Toda chain associated to (a co-extension of) the corresponding affine Poisson--Lie group of type ADE \cite{Fock:2014ifa,Williams:2012fz}. The one-parameter family of group elements $\mathsf{g}(\lambda)$ in our construction is given by the Lax operator for the chain, where $\lambda$ is the spectral parameter: the relation to the associated Dubrovin--Zhang Frobenius manifold of type ADE is suggested by analogous results for the simple Lie algebra case due to Lerche--Warner, Ito--Yang, and Dubrovin \cite{Lerche:1996an,Ito:1997ur,MR2070050}.
 
  For the minimal choices, the target Hurwitz space $H_{g_\omega,\mathsf{n}_\omega}$ is a space of rational functions ($g_\omega=0$) for type $\RR=A_l$, $B_l$, $C_l$, $D_l$ and $G_2$, and it is a space of meromorphic functions on higher genus curves in the other exceptional types. Note that different choices of $\omega$ induce different families of spectral curves, and therefore different embeddings $\iota_{\omega}:\cM^{\rm DZ}_{\RR}\hookrightarrow \HH^{[\mu]}_{g_{\omega}, \mathrm{m}_\omega}$ inside a parent Hurwitz space. Whilst we prove \cref{thm:main_0} for dominant weights $\omega$ in a minimal non-trivial Weyl orbit,  we  also provide verifications that different non-minimal choices of $\omega$ indeed give rise to isomorphic Frobenius manifolds.

\subsubsection{Application I: Frobenius prepotentials}

The original Dubrovin--Zhang construction establishes the existence of a Frobenius manifold structure on $\cM^{\rm DZ}_{\RR}$ by abstractly constructing a flat pencil of metrics $\gamma^*+\lambda \eta^*$ on $T^*\cM^{\rm DZ}_{\RR}$, where $\gamma^*$ arises from an extension of the Killing pairing to $\mathfrak{h}_\RR\oplus \bbC$, without reference to an actual system of flat coordinates for $\eta^*$ (the analogue of the Saito--Sekiguchi--Yano metric for finite reflection groups).  From \cref{thm:main_0}, the metric $\eta^*$ and Frobenius product on the base of the family of spectral curves can then be computed using Landau--Ginzburg residue formulas for the superpotential: the associativity of the Frobenius product reduces the analysis of the pole structure of the Landau--Ginzburg residues to the sole poles of the superpotential, giving closed-form expressions for the flat coordinates of $\eta^*$ and its prepotential. We then obtain the following 
\begin{thm}[=\cref{Thm:5.1} and Examples in \cref{sec:examples}]
For all $\RR$, we provide flat coordinates for the Saito metric of the Dubrovin--Zhang pencil and closed-form prepotentials for $\cM^{\rm DZ}_{\RR}$.
\label{thm:prep_0}
\end{thm}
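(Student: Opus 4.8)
The plan is to transport the full Frobenius structure across the mirror isomorphism $\iota_\omega$ of \cref{thm:main_0} and to read both objects off the superpotential directly. Write $\lambda$ for the covering map on the fibres of $\pi:\cC_{g_\omega}\to\HH_{g_\omega,\mathsf{n}_\omega}$, $\phi$ for the chosen third-kind differential and $s$ for the local coordinate with $\phi=\di s$. By Dubrovin's construction the Saito metric $\eta$ and the structure $3$-tensor $c$ of $\HH^\phi_{g_\omega,\mathsf{n}_\omega}$ are the residue sums over the ramification locus $\{\partial_s\lambda=0\}$
\begin{equation}
\eta(\partial',\partial'')=\sum_{\partial_s\lambda=0}\Res\frac{(\partial'\lambda)(\partial''\lambda)}{\partial_s\lambda}\,\phi,\qquad
c(\partial',\partial'',\partial''')=\sum_{\partial_s\lambda=0}\Res\frac{(\partial'\lambda)(\partial''\lambda)(\partial'''\lambda)}{\partial_s\lambda}\,\phi,
\end{equation}
the derivatives $\partial',\partial'',\partial'''$ being taken at fixed $s$. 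Pulling these back along the affine-linear embedding $\iota_\omega$ computes $\eta$, $c$, and hence the prepotential $F$, of $\cM^{\rm DZ}_\RR$ intrinsically from $\lambda$.

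First I would produce the flat coordinates of $\eta$. Since $\cM^{\rm LG}_\omega=\iota_\omega(\cM^{\rm DZ}_\RR)$ sits as an affine-linear slice in the canonical flat coordinates of the parent Hurwitz space (\cref{thm:main_0}), the $\eta$-flat coordinates are the restrictions of Dubrovin's standard Hurwitz-space flat coordinates: the principal-part coefficients of suitable roots and logarithms of $\lambda$ at the poles $\lambda^{-1}(\infty)$, the residues of $\phi$, and the periods of $\phi$ over a symplectic basis of $H_1(C_{g_\omega})$. As the superpotential is the explicit trigonometric characteristic polynomial of the relativistic Toda Lax element in $\rho_\omega$, its poles and the Laurent tails there are known in closed form as polynomials in the fundamental characters $\chi_{\rho_1},\dots,\chi_{\rho_{l_\RR}}$; extracting the relevant expansion coefficients yields the flat coordinates as explicit functions of the character coordinates, uniformly in $\RR$, with the period directions entering only as affine-linear combinations of the residue-type coordinates along the slice.

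Next I would compute the prepotential. The decisive simplification is the pole reduction already flagged in the text: for any triple of flat fields the one-form $\tfrac{(\partial'\lambda)(\partial''\lambda)(\partial'''\lambda)}{\partial_s\lambda}\phi$ is meromorphic on the compact curve $C_{g_\omega}$, so by the residue theorem the sum over the inaccessible ramification points equals minus the sum of residues at the poles of $\lambda$ and of $\phi$, all of which are explicit. This renders $c_{abc}$ a closed-form expression in the flat coordinates, and associativity guarantees that these third derivatives integrate to a single function $F$. I would fix $F$ by integrating $\partial_a\partial_b\partial_c F=c_{abc}$, using the quasi-homogeneity of the structure under the Euler field---whose grading is dictated by the Dubrovin--Zhang scaling on $\mathfrak h_\RR\oplus\bbC$---to reduce the integration data to finitely many constants, pinned down by the unit-field normalisation $\eta_{ab}=c_{1ab}$. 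Running this type by type, together with the explicit computations recorded in \cref{sec:examples}, proves the statement.

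The step I expect to be the main obstacle is the closed-form inversion of the mirror map. The residue formulas deliver $\eta$ and $c$ naturally as functions of the character coordinates $\chi_{\rho_i}$, whereas the prepotential is required in the $\eta$-flat coordinates, and passing between the two is a priori transcendental; it is the trigonometric---rather than elliptic or higher---nature of the Toda superpotential that keeps this inversion algebraic and the resulting $F$ a closed-form, exponential-polynomial expression. The exceptional higher-genus types $E_6,E_7,E_8,F_4$ require the most care, since there one must additionally verify that the period data enters the flat coordinates and the pole-reduced structure constants only through the same explicit combinations, so that uniformity of the closed form across all Dynkin types is preserved.
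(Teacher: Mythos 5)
Your flat-coordinate step is essentially the paper's: the paper's \cref{Thm:5.1} takes Dubrovin's Hurwitz-space flat coordinates \eqref{Eq:5.1a}--\eqref{Eq:5.1c} for $\phi=\rd\log\mu$ and restricts them to $\cM^{\rm LG}_\omega$, checking affine-linearity of the slice (via the involution/fixed-locus description for classical types, by direct Puiseux computation for exceptional ones). The genuine gap is in your prepotential step. Your central claim --- that because $\frac{(\partial'\lambda)(\partial''\lambda)(\partial'''\lambda)}{\partial_s\lambda}\,\phi$ is meromorphic on the compact curve, the residue theorem converts the sum over critical points of $\lambda$ into a sum over poles of $\lambda$ and $\phi$, ``all of which are explicit'' --- fails precisely in the new cases of the theorem, namely the exceptional types with $g_\omega>0$. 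The fixed-$\mu$ derivatives entering \eqref{eq:todac} are $\partial_{w_i}\lambda=-\partial_{w_i}\cP^{\rm red}_\omega/\partial_\lambda\cP^{\rm red}_\omega$, and these blow up at the points of $\overline{C^{(\omega,\bar k)}_w}$ where $\partial_\lambda\cP^{\rm red}_\omega=0$, i.e.\ at the ramification points $\rd\mu=0$ of the spectral curve over the $\mu$-line. Such points exist exactly when the curve is not rational over $\mu$ ($E_6$, $E_7$, $E_8$, $F_4$); they are additional poles of your one-form, distinct from the poles of $\lambda$ and $\phi$, and they are roots of the $\lambda$-discriminant of $\cP^{\rm red}_\omega$, which are \emph{not} available in closed form as functions of $w$. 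So contour-moving alone does not produce all structure constants, and your argument collapses exactly where the theorem is new.

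The paper's proof contains the two ingredients you are missing. First, an order-of-divergence count (using that $\partial_{w_0}\lambda=\lambda/w_0$ stays regular at the $\rd\mu=0$ points) shows the spurious residues vanish whenever one of the three fields is $\partial_{w_0}$; hence only the components $\tilde c_{0ij}$ of \eqref{Eq:ctilde0} are computed by residues at the poles of $\lambda$. Second, the remaining components are recovered not by residue calculus but by solving the WDVV equations \eqref{Eq:WDVVu} with $n=0$, which become an inhomogeneous \emph{linear} system of maximal rank in the unknown $\tilde c_{ijk}$, after which quasi-homogeneity and uniqueness (via \cref{Thm:DZ}) pin down $F$. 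Incidentally, the obstacle you flag --- transcendence of the mirror-map inversion and the role of period data --- is not the real difficulty: the paper observes that the flat coordinates are polynomial in $w$ (up to powers of $w_0$) with polynomial inverse, so that passage is harmless; the genuine obstruction is the inaccessible $\rd\mu=0$ residues described above.
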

Our expressions recover results of \cite{MR1606165,Dubrovin:2015wdx} for classical Lie algebras; the statements for exceptional Dynkin types are new. \cref{thm:main_0} is key to the determination of the prepotential: the Landau--Ginzburg calculation reduces the computation of flat coordinates for $\eta^*$ and a distinguished subset of structure constants to straightforward residue calculations on the spectral curves, from which the entire product structure on the Frobenius manifold can be recovered using WDVV equations.

\subsubsection{Application II: Lyaskho--Looijenga multiplicities of meromorphic functions}
\label{sec:ll_0}

The enumeration of isomorphism classes of covers of $S^2$ with prescribed ramification over a point is a classical problem in topology and enumerative combinatorics, going back to Hurwitz' formula for the case in which the covering surface is also a Riemann sphere. The result of the enumeration for a cover of arbitrary geometric genus $g$ and branching profile $\mathsf{n}=(n_0,\dots, n_m)$ is the Hurwitz number $h_{g,\mathsf{n}}$, whose significance straddles several domains in enumerative combinatorics \cite{MR1649966,MR1763948}, representation theory of the symmetric group \cite{MR1181077}, moduli of curves \cite{Ekedahl:LL}, and mathematical physics \cite{Crescimanno:1994eg,Bouchard:2007hi,Caporaso:2006gk}. It was first noticed by Arnold \cite{MR1387484} that when the branching profile has maximal degeneration (i.e., for polynomial maps $f:\bbP^1\to \bbP^1$) this problem is intimately related to considering the topology of the complement of the discriminant for the base of the type $A_l$ mini-versal deformation, and in particular to the degree of the  Lyashko--Looijenga mapping \cite{MR542251,MR0422675}
$\mathrm{LL}: \bbC[\mu] \to \bbC[\mu]$, which assigns to a polynomial $\lambda(\mu)$ the unordered set of its critical values $\mathrm{LL}(\lambda)(\mu)=\prod_{\lambda'(\tilde z)=0} (\mu-f(\tilde z))$. This is a finite polynomial map \cite{MR1387484,MR0422675}, inducing a stratification of $\bbC[\mu]$ according to the degeneracy of the critical values of $\lambda$. The computation of the topological degree of this mapping on a given stratum, enumerating the number of polynomials sharing the same critical values counted with multiplicity, can usually be translated into a combinatorial problem enumerating some class of embedded graphs. This connection was used by Looijenga \cite{MR0422675} to reprove Cayley's formula for the enumeration of marked trees (corresponding to the co-dimension zero stratum), and by Arnold \cite{MR1387484} to encompass the case of Laurent polynomials (see also \cite{MR1724267,MR2324558} for generalisations to rational functions and discriminant strata). The extension of this combinatorial approach to arbitrary strata at higher genus, involving enumerations of suitable coloured oriented graphs ($k$-constellations), appears unwieldy \cite{MR1944086}. However, when $\lambda(\mu)$ is the Landau--Ginzburg superpotential of a semi-simple, conformal Frobenius manifold, the graded structure of the latter can be used to determine the Lyashko--Looijenga multiplicity of $\lambda(\mu)$ by a direct application of the quasi-homogeneous Bezout theorem \cite{MR777682}, with no combinatorics involved. In particular \cref{thm:main_0} has the following immediate consequence.

\begin{thm}[=\cref{cor:LLdeg}]
For all $\RR$ we compute the Lyashko--Looijenga multiplicity of the stratum $\iota_\omega(\cM^{\rm DZ}_\RR) =\cM^{\rm LG}_\omega \subset  \HH^{[\mu]}_{g_\omega,\mathsf{n}_\omega}$.
\end{thm}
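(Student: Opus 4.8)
The plan is to exploit the identification of \cref{thm:main_0} to reduce the statement to a single application of the quasi-homogeneous B\'ezout theorem \cite{MR777682}. By \cref{thm:main_0}, the stratum $\cM^{\rm LG}_\omega$ is a semi-simple, conformal Frobenius manifold of dimension $n=l_\RR+1$ carrying an explicit one-dimensional superpotential $\lambda$; its canonical coordinates are precisely the $n$ critical values $v_i=\lambda(\mu_i)$ with $\lambda'(\mu_i)=0$. The Lyashko--Looijenga multiplicity of the stratum is, by definition, the topological degree of the map
\[
\mathrm{LL}\colon \cM^{\rm LG}_\omega \longrightarrow \bbC^n, \qquad (t^1,\dots,t^n)\longmapsto \big(e_1(v),\dots,e_n(v)\big),
\]
sending the flat coordinates of $\cM^{\rm DZ}_\RR$ to the elementary symmetric functions of the critical values, so the whole problem is to compute this degree.

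First I would record the grading. The Euler vector field of $\cM^{\rm DZ}_\RR$ endows each flat coordinate with a scaling weight and forces every canonical coordinate $v_i$ to be quasi-homogeneous of one and the same degree $\delta$; consequently $e_k(v)$ is quasi-homogeneous of degree $k\delta$. The one subtlety is the distinguished \emph{marginal} direction coming from the $\bbC$-factor of $\mathfrak{h}_\RR\times\bbC$: on it the Euler field acts by a pure shift, so the associated flat coordinate has weight zero. I would cure this by passing to the exponentiated variable $q=e^{t^{\,l_\RR+1}}$ (an exponentiated loop variable), which carries a genuine positive weight, after which $\mathrm{LL}$ becomes a quasi-homogeneous polynomial map in variables all of whose weights $(d_1,\dots,d_{l_\RR},\deg q)$ are positive and are read off directly from the grading of $\lambda$ established in \cref{thm:main_0}.

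With quasi-homogeneity in hand, the quasi-homogeneous B\'ezout theorem gives
\[
\deg \mathrm{LL}=\frac{\prod_{k=1}^{n} k\delta}{\big(\prod_{a=1}^{l_\RR} d_a\big)\,\deg q}=\frac{n!\,\delta^{\,n}}{\big(\prod_{a=1}^{l_\RR} d_a\big)\,\deg q},
\]
valid \emph{provided} the principal (top-weight) parts of the components $e_k(v)$ have $0$ as their only common zero. This finiteness hypothesis is the sole input the theorem requires, and is where the genuine work lies. I expect this nondegeneracy to be the main obstacle: one must show that along every nontrivial quasi-homogeneous degeneration of the flat coordinates the critical values escape to infinity, equivalently that $\mathrm{LL}$ is a finite (proper) map. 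In the classical polynomial ($A_l$) case this is the theorem of Arnold and Looijenga \cite{MR1387484,MR0422675}; here it must be established uniformly, and the danger is that, as $(t,q)\to\infty$ along a fixed weighted ray, some critical point of $\lambda$ could migrate to a pole of $\lambda$ with a bounded critical value. Ruling this out requires the explicit control of the pole structure and ramification profile $\mathsf{n}_\omega$ of the superpotential furnished by \cref{thm:main_0}, together with the fact that semisimplicity makes $\mathrm{LL}$ a local isomorphism at generic points, so that properness alone forces finiteness of the fibres.

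Finally I would substitute the explicit conformal weights. The degrees $d_a$ and $\deg q$ are Lie-theoretic data, determined by the degrees of the fundamental characters (equivalently the marks and exponents attached to $\RR$ and to the chosen weight $\omega$), which the grading of the spectral-curve superpotential in \cref{thm:main_0} makes completely explicit; inserting them into the displayed formula yields the closed-form multiplicities recorded type-by-type in \cref{Tab:topdataclass}, with no enumeration of constellations or embedded graphs required.
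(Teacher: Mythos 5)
Your proposal follows essentially the same route as the paper: use the mirror theorem to realise $\cM^{\rm LG}_\omega$ as a conformal, semisimple Frobenius manifold, exponentiate the marginal flat coordinate to $\cQ=\re^{t_{l_\RR+1}}$ so that all weights become positive, observe that the Euler grading gives every canonical coordinate the same weight and hence makes $e_k(u)$ quasi-homogeneous of degree proportional to $k$, and finish with the quasi-homogeneous B\'ezout theorem evaluated on the Lie-theoretic weights $d_j=(\omega_j,\omega_{\bar k})$.

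There is, however, one step you assert that is exactly where the paper does its (small amount of) real work: the claim that $\mathrm{LL}$ is a \emph{polynomial} map of $(t_1,\dots,t_{l_\RR},\cQ)$. The individual critical values $u_i(t)$ are only algebraic functions of the flat coordinates, so the polynomiality of their elementary symmetric functions is not automatic, and B\'ezout cannot be invoked without it. The paper secures it with a single identity,
\begin{equation*}
\prod_{i=1}^{l_\RR+1}\bigl(z-u_i(t)\bigr)=\det\bigl(z-(E(t)\,\cdot\,)\bigr),
\end{equation*}
i.e.\ the $e_j(u(t))$ are the coefficients of the characteristic polynomial of multiplication by the Euler vector field; since both the structure constants and $E$ are polynomial in $(t,\cQ)$ --- by the quasi-polynomiality of the prepotential in the Dubrovin--Zhang reconstruction theorem, transported through the mirror isomorphism --- each $e_j(u(t))$ is manifestly polynomial. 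Conversely, the finiteness hypothesis that you single out as "where the genuine work lies" is dispatched much more softly in the paper: it is taken from the general fact, recorded at the start of its applications section, that the $\mathrm{LL}$ map of a generically semisimple Frobenius manifold is a covering of finite multiplicity over the $S_n$-quotient of the complement of the discriminant (canonical coordinates are local coordinates on the semisimple locus), with no escape-to-infinity analysis of critical points near the poles of $\lambda$ required. So your division of labour is inverted relative to the paper's: what you flag as the crux is treated there as a soft general fact, while the step you pass over in one clause is the one place where a specific device is needed.
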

This includes, in particular, the higher genus Hurwitz spaces appearing for types $\RR=E_n$ and $F_4$ (see \cref{Tab:topdataclass}).

\subsubsection{Application III: the dispersionless extended type-$\RR$ Toda hierarchy}

The datum of a semi-simple conformal Frobenius manifold is equivalent to the existence of a $\tau$-symmetric quasi-linear integrable hierarchy, which is bihamiltonian with respect to a Dubrovin--Novikov hydrodynamic Poisson pencil. Having a description of the Frobenius manifold in terms of a closed-form prepotential allows to give an explicit presentation of the hierarchy in terms of an infinite set of commuting 1+1 PDEs in normal coordinates. The loop-space version of \cref{thm:prep_0} is then the following
\begin{thm}[=\cref{prop:mirrorham}]
For all $\RR$, we construct a bihamiltonian dispersionless hierarchy on the loop space $\LL \cM_\RR^{\rm DZ}$ in Hamiltonian form for the canonical Poisson pencil associated to $\cM_\RR^{\rm DZ}$. 
\label{thm:is_0}
\end{thm}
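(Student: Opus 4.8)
The plan is to obtain \cref{thm:is_0} as an instance of the canonical construction, due to Dubrovin \cite{Dubrovin:1994hc}, attaching to any semisimple conformal Frobenius manifold $M$ a bihamiltonian hierarchy of hydrodynamic type on its loop space $\LL M$, and then to make every ingredient explicit from the closed-form prepotential supplied by \cref{thm:prep_0}. Writing $t^1,\dots,t^n$ for the flat coordinates of the Saito metric $\eta$ on $\cM^{\rm DZ}_\RR$, $e=\partial/\partial t^1$ for the unit, $E$ for the Euler field and $F$ for the prepotential, the structure constants are $c^\epsilon_{\beta\gamma}=\eta^{\epsilon\mu}\partial_\mu\partial_\beta\partial_\gamma F$. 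The first bracket of the pencil is the Dubrovin--Novikov bracket determined by $\eta$, which in flat coordinates has vanishing Christoffel terms,
\[
\{t^i(x),t^j(y)\}_1=\eta^{ij}\,\delta'(x-y),
\]
while the second is the hydrodynamic bracket built from the intersection form $g^{\alpha\beta}=E^\epsilon\,\eta^{\alpha\mu}\eta^{\beta\nu}\partial_\mu\partial_\nu\partial_\epsilon F$. Compatibility of the two is precisely the statement that $(\eta,g)$ is a flat pencil of metrics, which is automatic from the Frobenius axioms; hence $\{\,,\,\}_1+\lambda\{\,,\,\}_2$ is the canonical Poisson pencil associated to $\cM^{\rm DZ}_\RR$.

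The Hamiltonians are then produced by the principal-hierarchy recursion. One constructs densities $\theta_{\alpha,p}(t)$, $p\geq 0$, by
\[
\partial_\beta\partial_\gamma\theta_{\alpha,p+1}=c^\epsilon_{\beta\gamma}\,\partial_\epsilon\theta_{\alpha,p},\qquad \theta_{\alpha,0}=\eta_{\alpha\beta}t^\beta,
\]
and sets $H_{\alpha,p}=\int_{S^1}\theta_{\alpha,p+1}\,dx$. The associated flows are the dispersionless hierarchy in Hamiltonian form,
\[
\frac{\partial t^\gamma}{\partial T^{\alpha,p}}=\{t^\gamma(x),H_{\alpha,p}\}_1=\partial_x\!\left(\eta^{\gamma\mu}\partial_\mu\theta_{\alpha,p+1}\right).
\]
The same flows are generated by $\{\,,\,\}_2$ with shifted Hamiltonians, the two descriptions being related by the Lenard--Magri scheme inherited from the flat pencil, which is what makes the hierarchy bihamiltonian. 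Since \cref{thm:prep_0} gives $F$, hence all $c^\epsilon_{\beta\gamma}$, in closed form, the recursion integrates order by order and the right-hand sides can be written out explicitly; the resulting system is quasilinear and first order, i.e. genuinely dispersionless, and is the long-wave limit of a $\tau$-symmetric hierarchy. In type $A$ this recovers the dispersionless limit of the extended Toda hierarchy, and the general-$\RR$ output is its root-theoretic counterpart.

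Before running the recursion one must verify the conformality hypotheses that make the second bracket Poisson. This reduces to checking that $E$ is affine-linear in the flat coordinates of \cref{thm:prep_0}, so that $\mathcal{L}_E\eta=(2-d)\eta$ for the charge $d$ of $\cM^{\rm DZ}_\RR$ and $g^{\alpha\beta}$ is a flat contravariant metric; both follow from the quasi-homogeneity of $F$. The \textbf{main obstacle} is the behaviour along the distinguished direction coming from the $\bbC$-factor and the $\bbZ$-extension in the definition of $\cM^{\rm DZ}_\RR$: there the corresponding flat coordinate enters $F$ exponentially rather than polynomially, so one of the generating densities $\theta$ is logarithmic. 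One must therefore check that the intersection form stays non-degenerate on the semisimple (open-orbit) locus so that $\{\,,\,\}_2$ is a genuine Poisson structure, and that the choice of third-kind differential in \cref{thm:main_0} normalises the residues so as to select the extended rather than the bare flows. Once these points are settled, the explicit prepotential turns the whole construction into a finite, closed-form computation.
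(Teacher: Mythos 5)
Your proposal is correct in substance, but it diverges from the paper at the decisive step, so it is worth comparing the two routes. Everything in your first three paragraphs --- the Dubrovin--Novikov pencil \eqref{eq:PBlambda}, the recursion $\partial_\beta\partial_\gamma\theta_{\alpha,p+1}=c^\epsilon_{\beta\gamma}\partial_\epsilon\theta_{\alpha,p}$ with $\theta_{\alpha,0}=\eta_{\alpha\beta}t^\beta$, involutivity, and the Lenard--Magri scheme --- is exactly the general machinery the paper recalls in Section~5.2 before stating \cref{prop:mirrorham}; existence and bihamiltonicity of the hierarchy indeed follow from semisimplicity, conformality and the flat pencil, which hold for $\cM^{\rm DZ}_\RR$ by \cref{Thm:DZ}, so your argument does prove the statement as literally phrased. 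The difference lies in how the Hamiltonian densities are produced. You integrate the recursion order by order from the closed-form prepotential of \cref{thm:prep_0}; the paper instead feeds the Landau--Ginzburg superpotential of \cref{thm:mirror} into Dubrovin's Hurwitz-space formulas \cite[Proposition~6.3]{Dubrovin:1994hc}, obtaining the deformed flat coordinates in closed form, parametrically in the flow index: the residue and principal-value expressions \eqref{eq:RToda1}--\eqref{eq:RToda3} involving ${}_1F_1\big(1,1+\tfrac{\a}{n_i+1};z\lambda\big)\,\rd\mu/\mu$ and $\re^{z\lambda}\rd\mu/\mu$ on the spectral curve. This is what makes the theorem effective --- the paper itself remarks, in the $G_2$ example around \eqref{eq:recdefconn}, that the recursion you propose is ``ostensibly very hard to solve directly'' --- and it is also what yields the companion dispersionless Lax--Sato description as a reduction of the Whitham hierarchy, which your route does not reach. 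Two points in your sketch should be tightened if you keep your approach: the recursion determines $\theta_{\alpha,p+1}$ only up to adding lower-order solutions and constants, so the calibration (the $z^{\cV}z^R$ normalisation tied to the monodromy data) must be fixed to obtain the principal hierarchy with its $\tau$-symmetry; and your ``main obstacle'' concerning the exponential coordinate is handled in the paper not by an ad hoc check but automatically, since the extended flows are exactly the third-kind densities $\widetilde h^{\rm ext}_j$ of \eqref{eq:RToda2}, whose logarithmic behaviour is built into the formula.
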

For type~$A_n$ this integrable hierarchy is the zero-dispersion limit of Carlet's extended bigraded Toda hierarchy \cite{MR2246697}, and for type~$D_n$ it is the long-wave limit of the Cheng--Milanov extended $D$-type hierarchy \cite{Cheng:2019qjo}. For simply-laced $\RR$, we expect that the principal hierarchies of \cref{thm:is_0} should coincide with the dispersionless limit of the Hirota integrable hierarchies constructed by Milanov--Shen--Tseng in \cite{Milanov:2014pma}. The non-simply-laced cases are, to the best of our knowledge, new examples of hydrodynamic integrable hierarchies: our construction of the Landau--Ginzburg superpotential is highly suggestive that these should be obtained as symmetry reductions of the hierarchies in \cite{Milanov:2014pma} by the usual folding procedure of the Dynkin diagram. Aside from laying the foundation for determining the prepotential of $\cM^{\rm DZ}_\RR$, \cref{thm:main_0}  also provides a dispersionless Lax formulation for the hierarchy as an explicit reduction of Krichever's genus-$g_\omega$, $\ell(\mathsf{n}_\omega)$-pointed universal Whitham hierarchy.

\subsection{Further applications}

We also highlight three further applications of \cref{thm:main_0}, which are the subject of ongoing investigation and whose details will be provided in three separate publications. 

\subsubsection{The orbifold Norbury--Scott conjecture}

When $\RR=A_1$, the Frobenius manifold $\cM_\RR^{\rm DZ}$ famously coincides with the quantum cohomology $\mathrm{QH}(\bbP^1, \bbC)$ of the complex projective line.  In \cite{MR3268770}, the authors propose a higher genus version of this statmement and  conjecture that the Chekhov--Eynard--Orantin topological recursion applied to the Landau--Ginzburg superpotential of $\bbP^1$
computes  the $n$-point, genus-$g$ Gromov--Witten invariants  of $\bbP^1$ with descendant insertions of the K\"ahler class  (the ``stationary" invariants) in terms of explicit residues on the associated spectral curve (see \cite{DuninBarkowski:2012bw} for a proof).  It was shown in \cite{MR2672302} that for type $\RR=A_l$, $D_l$ and $E_l$ the Dubrovin--Zhang Frobenius manifolds $\cM_\RR^{\rm DZ}$ are isomorphic to the orbifold quantum cohomology of the Fano orbicurves $\mathscr{C}_\RR = [\bbC^\star \backslash \bbC^2/\Gamma_\RR]$, where $\Gamma_\RR < SU(2)$, $|\Gamma_\RR|<\infty$ is the McKay group of type $\RR$. In particular,
\beq
\mathscr{C}_{\RR} \simeq 
\l\{
\bary{cc}
\bbP(1,l), & \RR=A_l,\\
\bbP_{2,2,l-2}, & \RR=D_l,\\
\bbP_{2,3,l-3} & \RR=E_l.
\eary
\r.
\eeq
The construction of the LG superpotentials of \cref{thm:main_0} now associates a family of mirror spectral curves to the quantum cohomology of these orbifolds. As anticipated in \cite{Brini:2017gfi}, it is natural to conjecture that the Norbury--Scott theorem receives an orbifold generalisation through \cref{thm:main_0}, whereby higher genus stationary Gromov--Witten invariants of $\mathscr{C}_\RR$ can be computed by residue calculus on the respective type $\RR$ spectral curve mirrors. The investigation of the correct phrasing for the topological recursion is ongoing.

\subsubsection{Seiberg--Witten theory}

For the case of polynomial Frobenius manifolds with $\RR=A_l$, $D_l$ or $E_l$, it was noted by a number of authors \cite{Lerche:1996an,Ito:1997ur,MR2070050} that the {\it odd periods} of the Frobenius manifold (in the language of \cite{MR2070050}) give the quantum periods of the Seiberg--Witten family of curves dual to $\cN=2$ pure super Yang--Mills theory on $\bbR^4$ with gauge group given by the compact real form of $\exp(\mathfrak{g})$. In \cite{Nekrasov:1996cz}, Nekrasov reformulated the Seiberg--Witten study of $\cN=2$, $d=4$ gauge theories in the context of five-dimensional $\cN=1$ gauge theories compactified on a circle, by viewing the five-dimensional theory on $\bbR^4 \times S^1$ with gauge group $G$ as, effectively, a four-dimensional theory with gauge group the extended loop group $\widehat{G}$. In this context the classical Coulomb vacua are parametrised by orbits of the associated extended affine Weyl group. It is only natural to conjecture that the construction of odd periods and Picard--Fuchs system for four dimensional Seiberg--Witten theory from the polynomial Frobenius manifolds can be lifted to provide solutions of five-dimensional Seiberg--Witten theory using their Dubrovin--Zhang, extended affine counterpart; this can indeed be explicitly checked for simply-laced cases \cite{Brini:2021wrm}. Considerations about folding in five dimensions also allow to treat non-simply laced Lie groups, which points to the existence of a new class of Frobenius manifolds having as monodromy group an extension of the {\it twisted} affine Weyl group.

\subsubsection{Saito determinants on discriminant strata}

In \cite{antoniou2020saito}, the authors consider semi-simple Frobenius manifolds embedded as discriminant strata on the Dubrovin--Hertling polynomial Frobenius structures on the orbits of the reflection representation of Coxeter groups. In particular, they use the Landau--Ginzburg mirror superpotentials to establish structural results on the determinant of the restriction of the Saito metrics to arbitrary strata. A specific question asked by \cite{antoniou2020saito} is how much of that story can be lifted to the study of the Dubrovin--Zhang Frobenius manifolds on extended affine Weyl group orbits. The Landau--Ginzburg presentation of \cref{thm:main_0} unlocks the power to employ the same successful methodology in the affine setting as well.

\begin{table}[!h]
\begin{tabular}{|c|m{9cm}|}
\hline
$\RR$ & {An irreducible root system}  \\
\hline
$l_\RR$ & {The rank of $\RR$}  \\
\hline
$\mathfrak{g}_\RR$ & {The complex simple Lie algebra with root system $\RR$} \\
\hline
$\cG_\RR$ & {The simply connected complex simple Lie group $\exp(\mathfrak{g}_\RR)$} \\
\hline
$\mathfrak{h}_\RR$ (resp. $\cT_\RR$) & The Cartan subalgebra of $\mathfrak{g}_\RR$ (resp. Cartan torus of $\cG_\RR$)   \\
\hline
$\mathsf{g}$ (resp. $\mathsf{h}$) & A regular element of $\cG_\RR$ (resp. $\mathfrak{h}_{\RR}$) \\
\hline
$\cW_\RR$/$\widehat{\cW}_\RR$/$\widetilde{\cW}_\RR$  & The Weyl/affine Weyl/extended affine  Weyl group \\ & of Dynkin type $\RR$ \\
\hline
$\{\a_1, \dots, \a_{l_\RR}\}$ & The set of simple roots of $\RR$\\
\hline
$\{\omega_1, \dots, \omega_{l_\RR}\}$ & The set of fundamental weights of $\RR$  \\
\hline
$\Lambda_r(\RR)$& The lattice of roots of $\RR$\\
 (resp. $\Lambda_r(\RR)^\pm$) &   (resp. the semi-group of positive/negative roots)  \\ 
\hline
$\Lambda_w(\RR)$ & The lattice of all weights of $\RR$\\
 (resp. $\Lambda_w(\RR)^\pm$) &    (resp. the monoid of non-negative/non-positive weights) \\ 
\hline
$\rho_\omega$ & The irreducible representation of $\cG_\RR$ with highest weight $\omega$ \\
\hline
$\rho_i$ & The $i^{\rm th}$ fundamental representation of $\cG_\RR$, $i=1, \dots, l_\RR$ \\
\hline
$\Gamma(\rho)$  & The weight system of the representation $\rho$ \\
\hline
$\mathrm{Mult}_\rho(\omega)$  & The dimension of the weight space of $\omega$ in the \\ &   representation $\rho$ \\
\hline
$\chi_\omega$ (resp. $\chi_i$) & The formal character of  $\rho_\omega$ (resp. $\rho_{i}$)  \\
\hline
$\mathsf{w}$ & The Weyl vector of $\mathfrak{g}_\RR$  \\
\hline
$[i_1\dots i_{l_\RR}]_\RR$  & Components of a weight in the $\omega$-basis of $\RR$ \\
\hline
$(x_1, \dots, x_{l_\RR})$ & Linear 
coordinates on $\mathfrak{h}_\RR$  w.r.t. the
co-root \\
 & basis $\{\a_1^\vee, \dots, \a_{l_\RR}^\vee\}$
\\
\hline
$(Q_1, \dots, Q_{l_\RR})$ & $(\exp (x_1), \dots, \exp (x_{l_\RR}))$\\
\hline
$C_{\RR}$ (resp. $\cK_{\RR}$) & The Cartan (resp. symmetrised Cartan) matrix of $\RR$ \\
\hline
$\cM^{\rm DZ}_{\RR}/\cM^{\rm LG}_{\omega}/\HH^{[\mu]}_{g, \mathsf{n}}$ & Dubrovin--Zhang/Landau--Ginzburg/Hurwitz  Frobenius\\ 
 ($M^{\rm DZ}_{\RR}/M^{\rm LG}_{\omega}/H_{g, \mathsf{n}}$) & manifolds  (resp. their underlying complex manifolds)\\
\hline
$\mathsf{n} \vdash d$ & A padded partition of $d\in \bbN$ \\
 & (i.e. parts are allowed to be zero)\\
\hline
$|\mathsf{n}|$ (resp. $\ell(\mathsf{n})$) & The length of a partition $\mathsf{n}$ \\
&  (resp. the number of parts in $\mathsf{n}$) \\
\hline
\end{tabular}
\caption{Notation employed throughout the text.}
\label{tab:notation}
\end{table}

\subsection{Organisation of the paper}

The paper is organised as follows. In \cref{Section:DZ} we recall the definition of 
the affine Lie theoretic Frobenius manifolds of Dubrovin--Zhang (DZ).
In \cref{Section:LG} we state how to construct  Frobenius manifolds in terms of Landau--Ginzburg (LG) superpotentials defined on suitable strata of a Hurwitz space. We also recall the construction in \cite{Brini:2017gfi}, describing how to find LG-superpotentials for DZ-manifolds from the characteristic equation of a suitable degeneration of the Lax operator for the type $\RR$ periodic relativistic Toda chain. This boils down to finding relations in the representation ring of $\cG$, which we determine for all Dynkin types giving explicit algebraic expressions for the corresponding superpotentials. In \cref{Section:MS} we prove the mirror theorem, and determine in turn closed-form prepotentials for the corresponding Dubrovin--Zhang Frobenius manifolds, including the hitherto unknown exceptional cases in type $E_6$, $E_7$ and $F_4$. Finally, in \cref{Section:LL}, we discuss the applications to the extended $\RR$-type Toda hierarchies and the calculation of the multiplicities of the Lyashko--Looijenga map of $\cM_\omega^{\rm LG}$. Our notation\footnote{To declutter the polynomial expressions of the prepotentials of $\cM_\RR^{\rm DZ}$, and in a slight departure from the conventions in the Frobenius manifolds literature, components of a chart of $\cM_\RR^{\rm DZ}$ will consistently be written with lower indices; in particular the Einstein summation convention is {\it never} assumed.}
 is described in \cref{tab:notation}.

\subsection*{Acknowledgements} We are grateful to I.~A.~B.~Strachan for his comments on a previous version of the manuscript. Special thanks are offered to A.~Takahashi for pointing out a calculational error in \cref{cor:LLdeg,Tab:topdataclass}, of which we became aware after the appearance of \cite{otani2023number}: this is now rectified in the current version.  We are also indebted to the anonymous referee for numerous helpful suggestions of improvement.

\section{Dubrovin--Zhang Frobenius manifolds}
\label{Section:DZ}

\subsection{Generalities on Frobenius manifolds}
\label{Section:DZ.1}

We start by recalling the basic definitions from the theory of Frobenius manifolds.

\begin{defn}
A (complex, holomorphic) Frobenius manifold is a 5-tuple $\cM=(M,  \cdot, \eta , e, E)$, where $M$ is a finite dimensional complex manifold such that at each point $p \in M$, the fibre $T_pM$ of the holomorphic tangent bundle at $p$ has the structure of a unital associative commutative algebra with multiplication $\cdot$ and identity element $e$, varying holomorphically. Additionally, $\eta$ is a flat, holomorphic, non-degenerate symmetric $(0,2)$-tensor such that the Frobenius property holds:
\begin{equation}
    \eta(X \cdot Y, Z) = \eta(X, Y \cdot Z), \qquad \forall \, X, Y, Z \in \Gamma(M,TM).
\end{equation}

Moreover, the following properties are satisfied:

\ben
\item the unit vector field is horizontal,
    $\nabla e = 0$,
w.r.t. the Levi-Civita connection $\nabla$ associated to $\eta$;
\item
there exists a $(0,3)$-tensor $c \in \Gamma(M,\mathrm{Sym}^3 T^*M)$ such that 
$\nabla_{W}c(X, Y, Z)$
is totally symmetric $\forall  \, W, X, Y, Z \in \Gamma(M,TM)$;

\item there exists $E \in \Gamma(M,TM)$ such that $\nabla E$ is covariantly constant, and the corresponding 1-parameter group of diffeomorphisms acts by conformal transformations of the metric and  the product tensor. 
\een
\label{def:frob}
\end{defn}

\smallskip

A complex Frobenius manifold is {\it semisimple} if the set $$\mathrm{Discr}(M) \coloneqq \{p\in M~|~\exists v \in T_p M~{\rm with}~ v \cdot v=0\}$$ has positive complex codimension. Whenever $E$ is in the group of units of $(T_p M, \cdot)$,  one may define a second flat metric, $\gamma \in \Gamma(M, \mathrm{Sym}^2 T^*M)$, by 
\begin{equation}
    \gamma(E \cdot X, Y) = \eta(X, Y). 
\end{equation}
A key consequence of \cref{def:frob} is the existence of a one-parameter affine family of flat metrics on $T^*M$. The non-degenerate pairings $\eta$ and $\gamma$ on $\Gamma(M, TM)$ define dual cotangent metrics $\eta^*$ and $\gamma^*$, the Gram matrices of which are inverses of those of $\eta$ and $\gamma$. Then \cref{def:frob} implies that 
 $\gamma^*$ and $\eta^*$ form a flat pencil of metrics:
i.e.   $\gamma^*+\lambda \eta^*$
is a flat metric for any $\lambda \in \mathbb{C}$, and its Christoffel symbols satisfy $\Gamma(\gamma^*+\lambda \eta^*)=\Gamma(\gamma^*)+\lambda \Gamma(\eta^*)$. \\

Since the metric $\eta$ is flat, a Frobenius manifold carries a canonical affine equivalence class of charts given by flat frames for $\eta$. Spelling out the Frobenius manifold axioms in one such chart $(t_1, \dots, t_{\dim M})$ on some sufficiently small open $U \subset M$ amounts to the local existence of a holomorphic function $F$ (the \emph{prepotential}) with the following properties (we denote $\partial_i$ as a shorthand for $\frac{\partial}{\partial t_i}$):

\ben[(i)]
    \item the Gram matrix $\eta_{ij} \equiv \eta(\partial_i, \partial_j) = \partial_{ijk}^3 F$, is constant and non-degenerate;
    \item $e=\de_{t_1}$;
    \item $c_{ijk} \equiv c(\partial_i, \partial_j, \partial_k) = \eta(\partial_i \cdot \partial_j, \partial_k) = \dfrac{\partial^3 F}{\partial t_i \partial t_j \partial t_k}$;
    \item $E = \sum_{i} d_i t_i \partial_i + \sum_{i} r_i \partial_i $;
    \item $\gamma_{ij} = \sum_k E^k c_{kij} $; 
    \item $\partial_i \cdot \partial_j = \sum_k c^k_{ij} \partial_k$, where $c^k_{ij} \coloneqq \sum_{m} \eta^{k m}c_{mij}$, and $\eta^{ij} \coloneqq (\eta)^{-1}_{ij}$.
    \item $F$ satisfies the Witten--Dijkgraaf--Verlinde--Verlinde (WDVV) equations,
\begin{equation}
\sum_{k,l}    \dfrac{\partial^3 F}{\partial t_i \partial t_j \partial t_k} \eta^{kl}\dfrac{\partial^3 F}{\partial t_l \partial t_m \partial t_n} = j \longleftrightarrow m.
\end{equation}
\een 



\subsection{Frobenius manifolds from extended affine Weyl groups}
We will here give a condensed description of the Dubrovin--Zhang construction of semi-simple Frobenius manifold structures on the space of regular orbits of extended affine Weyl groups, which follows closely the account given in \cite{Brini:2017gfi}. \\

Let $\mathfrak{g}_\RR$ be a rank-$l_\RR$ complex simple Lie algebra associated to a root system $\mathcal{R}$, $\mathfrak{h}_\RR$ the associated Cartan subalgebra, $\dim \mathfrak{h}_\RR=l_{\mathcal{R}}$, and  $\mathcal{W}_\RR$ the Weyl group. The construction of Dubrovin--Zhang Frobenius manifolds depends on a canonical choice
of a marked node in the Dynkin diagram of $\RR$, which will be labelled $\bar k \in \{1, \dots, l_\RR\}$, and we let $\alpha_{\bar k}$ and $\omega_{\bar k}$ denote the corresponding simple root and fundamental weight, respectively.  This node is an ``attaching" vertex for the external nodes in the diagram, that is, the one which if removed splits the Dynkin diagram into disconnected A-type pieces: we depict this choice of marking on the Dynkin diagrams with one vertex added, corresponding to the affine root, in  \cref{fig:dynkin}. Except for $\RR=A_l$, where any node can be chosen, $\bar k$ marks the fundamental representation $\rho_{\bar k}$ of $\mathfrak{g}_\RR$ of highest dimension. \\

The action of $\mathcal{W}_{\RR}$ on $\mathfrak{h}_\RR$ may be lifted to an action of the affine Weyl group $\widehat{\mathcal{W}}_{\RR} \cong \mathcal{W}_{\RR} \ltimes  \Lambda^\vee_r(\RR)$, with  $\Lambda_r^\vee(\RR)$ being the lattice of co-roots:
 \begin{align}
    \widehat{\mathcal{W}}_{\RR} \times \mathfrak{h}_\RR& \mapsto \mathfrak{h}_\RR,\\
    ((w, \alpha^\vee), h) & \mapsto w(h) + \alpha^\vee.
\end{align}
Then the \textit{extended} affine Weyl group $\widetilde{\mathcal{W}}_{\RR}$
 is defined as the semi-direct product $\widetilde{\mathcal{W}}_{\RR} \coloneqq \widehat{\mathcal{W}}_{\RR} \ltimes \mathbb{Z}$ acting on $\mathfrak{h}_\RR \oplus \mathbb{C}$ by 
\begin{align}
    \widetilde{\mathcal{W}}_{\RR} \times {\mathfrak{h}_{\mathcal{R}}} \oplus \mathbb{C} & \rightarrow \mathfrak{h}_\RR \oplus \mathbb{C},\\
    ((w, \alpha^\vee, {l}), (h, v)) & \mapsto (w(h) + \alpha^\vee + l\omega_{\bar k}, {v} - {l}){.} 
    \label{eq:extweyl}
\end{align}

\begin{figure}[t]
    \centering
\def\arraystretch{1.5}    
    \begin{tabular}{|c|c|c|}
    \hline
    $\RR$ & $\bar k$ & Canonically marked affine Dynkin diagram \\
    \hline \hline 
      $A_l$   & $1, \dots, l$ &  \begin{tikzpicture}[scale=3] 
  \dynkin[label, root radius=0.06cm, labels={\alpha_0,\alpha_1,\alpha_2,\alpha_3,\alpha_{\bar{k}}, \alpha_{l-2},\alpha_{l-1},\alpha_l},affine mark=*] A[1]{ooo.X.ooo}  
\end{tikzpicture} \\ \hline   
        $B_l$ &  $l-1$ &  \begin{tikzpicture}[scale=3]
  \dynkin[label,affine mark=*, root radius=0.06cm, labels={\alpha_0~,\alpha_1~,\alpha_2,\alpha_3,\alpha_j, \alpha_{l-2},\alpha_{l-1},\alpha_l}] B[1]{ooo.o.oXo}
\end{tikzpicture} \\ \hline 
$C_l$ &  $l$ &   \begin{tikzpicture}[scale=3]
  \dynkin[label,affine mark=*, root radius=0.06cm, labels={\alpha_0,\alpha_1,\alpha_2,\alpha_3,\alpha_j, \alpha_{l-2},\alpha_{l-1},\alpha_l}] C[1]{ooo.o.ooX}
\end{tikzpicture} \\ \hline 

$D_l$ &  $l-2$ &   \begin{tikzpicture}[scale=3]
  \dynkin[label,affine mark=*, root radius=0.06cm, labels={\alpha_0~,\alpha_1~,\alpha_2,\alpha_3,\alpha_j, \alpha_{l-2},~\alpha_{l-1},~\alpha_l}] D[1]{ooo.o.Xoo}
\end{tikzpicture} \\ \hline 

$E_6$ &   $3$   &   \begin{tikzpicture}[scale=3]
  \dynkin[label,affine mark=*, root radius=0.06cm, labels={~\alpha_0,\alpha_1,~\alpha_6,\alpha_2,\alpha_3, \alpha_{4},\alpha_{5}}] E[1]{oooXoo}
\end{tikzpicture} \\ \hline 

$E_7$
&   $3$ &      \begin{tikzpicture}[scale=3]
  \dynkin[label,affine mark=*, root radius=0.06cm, labels={\alpha_0,\alpha_1,~\alpha_7,\alpha_2,\alpha_3, \alpha_{4},\alpha_{5},\alpha_6}] E[1]{oooXooo}
\end{tikzpicture} \\ \hline 

$E_8$

&  $3$ &       \begin{tikzpicture}[scale=3]
  \dynkin[label,affine mark=*, root radius=0.06cm, labels={\alpha_0,\alpha_1,~\alpha_8,\alpha_2,\alpha_3, \alpha_{4},\alpha_{5},\alpha_6,\alpha_7}] E[1]{oooXoooo}
\end{tikzpicture} \\ \hline 

$F_4$

&   $2$ &  \begin{tikzpicture}[scale=3]
  \dynkin[label,affine mark=*, root radius=0.06cm, labels={\alpha_0,\alpha_1,\alpha_2,\alpha_3, \alpha_{4}}] F[1]{oXoo}
\end{tikzpicture} \\ \hline 

$G_2$
&  $2$ &   \begin{tikzpicture}[scale=3]
  \dynkin[label,affine mark=*, root radius=0.06cm, labels={\alpha_0,\alpha_1,\alpha_2}] G[1]{oX}
\end{tikzpicture} \\ \hline 

    \end{tabular}

\caption{Affine Dynkin diagrams with canonical markings, as in \cite[Table~1]{MR1606165}. The node corresponding to the affine root is marked in black, and the canonical marked node is indicated with a $\times$.}

\label{fig:dynkin}
\end{figure}

Let $\Sigma_\RR$ denote the hyperplane arrangement associated to the root system $\RR$, and $\mathfrak{h}^{\text{reg}}_\RR \coloneqq \mathfrak{h}_\RR \backslash  \Sigma_\RR $ be the set of regular elements in $\mathfrak{h}_\RR$. The restriction of \eqref{eq:extweyl} to $\mathfrak{h}^{\rm reg}_\RR \oplus \bbC$ is then a free affine action, whose quotient defines the regular orbit space of the extended affine Weyl group of $\RR$ with marked node $\bar k$ as
\begin{equation}
M^{\rm DZ}_{\RR} \coloneqq (\mathfrak{h}_\RR^{\text{reg}} \times \mathbb{C})/ \widetilde{\mathcal{W}}_{\RR}  
\cong \mathcal{T}_\RR^{\text{reg}}/\cW_{\RR} \times \mathbb{C}^*, 
   \label{Def:DZmaniLG}
\end{equation}
where $\mathcal{T}^{\text{reg}}_\RR = \text{exp}( \mathfrak{h}_\RR^{\text{reg}})$ is the image of the set of regular elements of $\mathfrak{h}_\RR^{\rm reg}$ under the exponential map to the maximal torus $\mathcal{T}_\RR$. 

\begin{rmk}
Let $(x_1, \dots, x_{l_\RR})$ be linear coordinates on $\mathfrak{h}_\RR$ w.r.t. the co-root basis $\{\a_1^\vee, \dots, \a_{l_R}^\vee\}$, and extend these to linear coordinates $(x_1, \dots, x_{l_\RR}; x_{l_\RR+1})$ on $\mathfrak{h}_\RR \oplus \bbC$. Writing $Q_i=\re^{x_i}$, we denote $\cI_\RR \coloneqq \bbC[Q_1^\pm, \dots, Q_{l_\RR+1}^\pm]$ the ring of regular functions on the algebraic torus $ \mathcal{T}_\RR \times \bbC^\star \simeq (\bbC^\star)^{l_\RR+1}$. By its definition in \eqref{Def:DZmaniLG}, $M^{\rm DZ}_{\RR}$ is a smooth complex manifold homeomorphic to a Zariski open subset of the affine GIT quotient $\mathrm{Spec}~ \cI_{\RR}^{\widetilde{\mathcal{W}}_{\RR}}$. Note that, in \cite[Sec.~1, Definition and Main Lemma]{MR1606165}, the authors consider a partial compactification of the latter to an affine scheme $\mathrm{Spec}~\cA_\RR$, where $\cA_\RR$ is a polynomial subring, satisfying suitable boundedness conditions at infinity, of the Laurent polynomial ring $\cI_\RR$.  Then $M^{\rm DZ}_{\RR}$ also sits in the underlying affine variety as an open submanifold.\\
\end{rmk}

The linear coordinates $(x_1, \dots, x_{l_\RR}; x_{l_\RR+1})$ on $\mathfrak{h}_\RR^{\text{reg}} \oplus \bbC$ can serve as local coordinates on the regular orbit space. By orthogonal extension of minus the Cartan--Killing form on $\mathfrak{h}_\RR$, we define a non-degenerate pairing $\xi$ on  $\mathfrak{h}_\RR \times \mathbb{C}$ by
\begin{equation}
      \xi(\partial_{x_i}, \partial_{x_j}) \coloneqq
  \begin{cases}
                                   -(\cK_\RR)_{ij} & \text{if } i, j< l_{\mathcal{R}}+1, \\
                                   d_{\bar k} & \text{if } i=j=l_{\mathcal{R}}+1, \\
                                   0 & \text{otherwise,}
  \end{cases}
\end{equation}
with $x_{l_{\mathcal{R}}+1}$ parametrising linearly the right summand in $\mathfrak{h}_\RR \oplus \mathbb{C}$. Here, $d_i \coloneqq \bra \omega_i, \omega_{\bar k}\ket$, with $\bra \a, \b \ket$ being the pairing on $\mathfrak{h}_\RR^*$ induced by the restriction of the Killing form on the Cartan subalgebra. 
The quotient map $\aleph: \mathcal{T}_\RR^{\text{reg}} \times \mathbb{C}^* \rightarrow M_\RR^{\rm DZ}$ from \eqref{Def:DZmaniLG} defines a principal {$\mathcal{W}_{\mathcal{R}}$}-bundle on $M_\RR^{\rm DZ}$: a section $\tilde{\sigma}_i$, lifts a (sufficiently small) open $U \subset M_\RR^{\rm DZ}$ to the $i^\text{th}$ sheet of the cover $V_i \in \widetilde{\sigma}_i^{-1}(U) \equiv V_1 \sqcup \cdots \sqcup V_{|\mathcal{W}_{\mathcal{R}}|} $.  The following reconstruction theorem holds \cite[Thm~2.1]{MR1606165}:

\begin{thm}
There exists a unique (up to isomorphism) semisimple Frobenius structure $\cM_\RR^{\rm DZ}=(M_\RR^{\rm DZ}, e, E, \eta, \cdot)$  satisfying the following properties in flat coordinates $(t_1, \dots, t_{l_\RR+1})$ for $\eta$:
\begin{description}
    \item[DZ-I] $e = \partial_{t_{\bar k}}$;
    \item[DZ-II] $E = \dfrac{1}{d_{\bar k}} \partial_{x_{l_\RR+1}} = \sum_{j=1}^{l_{\RR}} \dfrac{d_j}{d_{\bar k}}t_j \partial_{{t}_j} + \dfrac{1}{d_{\bar k}} \partial_{{t}_{l_{\RR}+1}}$;
    \item[DZ-III] the intersection form is $\gamma = \tilde{\sigma}_i^* \xi$;
    \item[DZ-IV] the prepotential is polynomial in $t_1, \cdots, t_{l_{\RR}+1}$, $e^{t_{l_{\RR}+1}}$.
\end{description}
\label{Thm:DZ}
\end{thm}

Such Frobenius manifolds will always be of charge one, or equivalently, the prepotential will be a degree 2 quasi-homogeneous function of its arguments.

\section{Landau--Ginzburg superpotentials from Lie theory}
\label{Section:LG}

\subsection{One-dimensional LG mirror symmetry} 

\label{sec:hurgen}

Hurwitz spaces are moduli spaces parametrising ramified covers of the Riemann sphere. A point in such a space is {an equivalence} class $[\lambda: C_g \mapsto \mathbb{P}^1]$, where $C_g$ is a genus-$g$ smooth  complex projective curve (compact Riemann surface) and $\lambda$ is a morphism to the complex projective line realising $C_g$ as a branched cover of $\bbP^1$; the equivalence relation is here given by automorphisms of the cover. \\
We consider Hurwitz spaces with fixed ramification over infinity. Let the preimage of $\infty$ consist of $m+1$ distinct points, denoted by $\infty_i \in C_g$ for $i=0, \cdots, m$, with $\lambda$ having degree $n_i + 1$ near $\infty_i$. The corresponding Hurwitz space will be denoted ${H}_{g,\mathsf{n}}$, where $\mathsf{n} \coloneqq (n_0, \cdots, n_m)$.
This is a connected complex manifold (and in fact, an irreducible quasi-projective complex algebraic variety \cite{MR260752}) of dimension
%
   $d_{g,\mathsf{n}}  \coloneqq \text{dim}(H_{g,\mathsf{n}}) = 2g + 2m + \sum_{i=0}^m n_i.$
We will write $\pi$, $\lambda$ and $\Sigma_i$ for, respectively, the universal
family, the universal map, and the sections marking the $\infty_i$, as per the following commutative diagram:
\beq
\xymatrix{ C_g \ar[d]  \ar@{^{(}->}[r]& \cC_{g,\mathsf{n}}\ar[d]^\pi  \ar[r]^{\lambda}  &  \bbP^1 \\
                     [\lambda]  \ar@{^{(}->}[r]^{\rm pt}   \ar@/^1pc/[u]^{P_i}&                                             H_{g,\mathsf{n}}  \ar@/^1pc/[u]^{\Sigma_i}& 
}
\label{eq:hurdiag}
\eeq
We furthermore denote by $\rd=\rd_\pi$  the relative differential with respect to the
universal family and $p_i^{\rm cr}\in C_g\simeq \pi^{-1}([\lambda])$ the critical points $\rd
\lambda =0$ of the universal map. By the Riemann existence theorem, the critical values of $\lambda$, $\{u_i\}_{i = 1, \cdots ,d_{g;\mathsf{n}}}$, serve as local coordinates away from the closed subsets in $H_{g, \mathsf{n}}$ in which 
    $u_i = u_j$ for $i \neq j$, 
whose union is called the discriminant. Additionally, there is an action on a Hurwitz space given by the affine subgroup of the $\PGL_2(\bbC)$-action on the target,
\begin{equation}
    (C,\lambda) \mapsto (C, a\lambda+b), \qquad u_i \mapsto au_i + b,
    \label{Eq:Hurwitzaffine}
\end{equation}
for $a, b \in \mathbb{C}$, and $\forall i=1,\cdots, d_{g;\mathsf{n}}$. 

%
%
On the complement of the discriminant, we can associate a family of semi-simple, commutative, unital $\bbC$-algebra structures on the the tangent fibres at $(u_1, \dots, u_{d_{g,\mathsf{n}}})$ by positing that the coordinate vector fields in the $u$-chart are the idempotents of the algebra,
\begin{equation}
    \partial_{u_i} \cdot \partial_{u_j} = \delta_{ij}\partial_{u_i}.
\label{eq:prodss}
\end{equation}
The unit and Euler vector field,
\begin{equation}
 e =    \sum_{i=1}^{d_{g;\mathsf{n}}}\partial_{u_i},
\quad     E = \sum_{i=1}^{d_{g;\mathsf{n}}}u_i \partial_{u_i}, 
\label{eq:eEhur}
\end{equation}
arise here as the generators of the affine action in \eqref{Eq:Hurwitzaffine} by, respectively, translations and rescalings.

What remains to be constructed to define a full-fledged Frobenius manifold structure on $H_{g,\mathsf{n}}$ is a flat non-degenerate symmetric pairing playing the role of $\eta$, such that the vector fields $e$ and $E$ are, respectively, horizontal and linear under its Levi-Civita connection. This will depend on additional data \cite[Lecture~5]{Dubrovin:1994hc}, as we now explain in the generality we will require. 

\begin{defn}
  A meromorphic function $\mu : \cC_{g,\mathsf{n}} \to \bbP^1$ on the universal family is \emph{$\lambda$-admissible} if it satisfies the following properties:
\ben[(i)] 
\item $\mu$ does not factor through $\lambda$, i.e. $\nexists~ g : \bbP^1 \rightarrow \bbP^1$ s.t. $\mu = g \circ \lambda$;
\item $0 \neq \rd \mu \in \Omega^1_{\cC_{g,\mathsf{n}}/H_{g,\mathsf{n}}}$; 
\item $\mathrm{div}(\mu)=\sum_{i=0}^l a_i \Sigma_i(H_{g,\mathsf{n}})$ for $a_i\in \bbZ$.
\een 
\label{def:ladm}
\end{defn}
The datum of a $\lambda$-admissible projection $\mu$ allows to define extra structure on the universal curve, as follows. We can first of all canonically associate to it a relative one-form given by $\rd \log \mu \in \Omega^1_{\cC_{g,\mathsf{n}}/H_{g,\mathsf{n}}}(\infty_0+\dots+\infty_m)$: this is an exact third-kind differential on the fibres of the universal curve, which has simple poles at $\infty_i$ with residues $\Res_{\infty_i} \rd\log\mu =a_i$.
%
%
The second ingredient that the $\mu$-projection provides is a notion of a meromorphic Ehresmann connection on $T\cC_{g,\mathsf{n}}$, defined in terms of its singular foliation by level sets of $\mu$. Let  $p \in \cC_{g,\mathsf{n}}$ with $\rd \mu(p)\neq 0$, $\mathsf{m} \coloneqq \mu(p)$, so that the leaf $\cC_{g,\mathsf{n}}^{(\mathsf{m})} \coloneqq \{ p' \in \cC_{g,\mathsf{n}} | \mu(p') = \mathsf{m}\}$ is locally transverse to the fibres of the universal curve.
Let $U$ be a small neighbourhood of $\pi(p) \in H_{g, \mathsf{n}}$; by transverality, there is a canonical local holomorphic section $\Sigma_\mathsf{m} : U \to \pi^{-1}(U) $ of $\cC_{g,\mathsf{n}}$, lifting $U$ to $\pi^{-1}(U) \cap \cC_{g,\mathsf{n}}^{(\mathsf{m})}$. Accordingly, holomorphic vector fields $X\in \Gamma(U, TH_{g,\mathsf{n}})$ are lifted to local holomorphic sections $(\Sigma_\mathsf{m})_* X$ of $T\cC_{g,\mathsf{n}}$ which are tangent to the leaves of the foliation. This defines locally around $p$ a holomorphic derivation as
$$\delta^{(\mu)}_X f \coloneqq L_{(\Sigma_\mathsf{m})_* X} f.$$ Globally, however, the leaves of the $\mu$-foliation will fail to be transverse to the fibres of the universal curve at the critical locus of $\mu$: the derivation $\delta^{(\mu)}_X$ will therefore take values in the ring of meromorphic functions on $\cC_{g,\mathsf{n}}$, with poles on the ramification divisor of $\mu$:
\beq 
\bary{rccl}
\delta^{(\mu)}_X ~ : ~ & H^0(\cC_{g,\mathsf{n}}, \cO_{\cC_{g,\mathsf{n}}})  & \xrightarrow{\hspace*{.75cm}}   & H^0(\cC_{g,\mathsf{n}}, \cK_{\cC_{g,\mathsf{n}}}), \\
\\
 & f & \xmapsto{\hspace*{.75cm}} & (\delta^{(\mu)}_X f)(p) \coloneqq (L_{(\Sigma_{\mu(p)})_* X}) f(p) \, .
 \eary 
\label{eq:horder}
\eeq 
In more low-brow terms, and in local coordinates $p=(u_1, \dots, u_{d_{g;\mathsf{n}}}; \mu)$ on the complement of $\rd \mu=0$, the derivation $\delta^{(\mu)}_{\de_{u_i}} f$ is simply the partial derivative taken with respect to $u_i$ whilst keeping $\mu$ constant. The meromorphicity of the derivation near an order-$r$ ramification point $q^{\rm cr}\in \cC_{g,\mathsf{n}}$ with $\mu(q^{\rm cr})=\mathsf{m}(u_1, \dots, u_{d_{g;\mathsf{n}}})$, $\rd \mu(q^{\rm cr})=0$, is then just expressing that $$\delta^{(\mu)}_{\de_{u_i}} (\mu(p)-\mathsf{m})^{1/r} = -r^{-1} (\mu(p)-\mathsf{m})^{(1-r)/r}\de_{u_i} \mathsf{m}$$ has a pole of order $r-1$ at $p=q^{\rm cr}$ as soon as $\de_{u_i} \mathsf{m}(u) \neq 0$.\\

With these definitions at hand, we can define a Frobenius manifold structure $\HH_{g,\mathsf{n}}^{[\mu]} \coloneqq (H_{g,\mathsf{n}}, \cdot, \eta, e, E)$ on the Hurwitz space $H_{g,\mathsf{n}}$. For later convenience, we will introduce an additional parametric dependence of the Frobenius structure on a normalisation factor $\cN\in \bbC^\star$: this will be immaterial {\it per se} in the comparison with the Frobenius manifolds of \cref{Thm:DZ}, as such factor can be scaled away by a Frobenius manifold isomorphism given by a time-$1/(2\cN)$ flow along the Euler vector field\footnote{The Frobenius manifolds of \cref{Thm:DZ} have charge $d=1$, hence their prepotentials are quasi-homogeneous of degree $3-d=2$. A time-$s$ Euler flow hence scales them by $2s$.}, but it will be helpful in simplifying the notation of the proof of \cref{thm:mirror}. In terms of the datum of $(\lambda, \mu, \cN)$, the metric $\eta$ is defined by the residue formula
\begin{equation}
    \eta(X, Y) \coloneqq -\cN \sum_{i} \underset{p_i^{\text{cr}}}{\text{Res}}\dfrac{\delta^{(\mu)}_X~\lambda~ \delta^{(\mu)}_Y\lambda}{\text{d}\lambda} \l(\frac{\rd \mu}{\mu}\r)^2, 
    \label{eq:etares}
\end{equation}
for  $X, Y \in \Gamma(H_{g,\mathsf{n}},TH_{g,\mathsf{n}})$.  Furthermore, combining \eqref{eq:prodss} and \eqref{eq:etares} the $3$-tensor $c(X,Y,Z)$ is defined by the LG formula
\beq
    c(X, Y, Z) \coloneqq  \eta(X, Y \cdot Z) \coloneqq -\cN\sum_{i} \underset{p_i^{\text{cr}}}{\text{Res}}\dfrac{\delta^{(\mu)}_X\lambda~\delta^{(\mu)}_Y\lambda~\delta^{(\mu)}_Z\lambda}{ \text{d}\lambda}\l(\frac{\rd \mu}{\mu}\r)^2,
    \label{eq:cres}
\end{equation}
which clearly satisfies the Frobenius property. Moreover, the second flat pairing is obtained upon replacing $\lambda$ by $\log \lambda$ in \eqref{eq:etares}, 
\begin{equation}
    \gamma(X, Y)  \coloneqq -\cN \sum_{i} \underset{p_i^{\text{cr}}}{\text{Res}}\dfrac{\delta^{(\mu)}_X \log \lambda~ \delta^{(\mu)}_Y \log \lambda}{\rd \log \lambda} \l(\frac{\rd \mu}{\mu}\r)^2.
    \label{eq:gres}
\end{equation}


\begin{prop}
The residue formulas \eqref{eq:etares}--\eqref{eq:gres} define a Frobenius manifold structure
$\HH_{g,\mathsf{n}}^{[\mu]}\coloneqq (H_{g,\mathsf{n}}, \cdot, \eta, e, E)$, which is semi-simple outside the discrimimant of $H_{g,\mathsf{n}}$. 
\end{prop}

The statement of the Proposition is a direct specialisation of the Main Lemma and the proof of Thm~5.1 in \cite[Lecture~5]{Dubrovin:1994hc}, where Frobenius structures are constructed on (suitable covers) of Hurwitz spaces $H_{g, \mathsf{n}}$. These depend on a choice of a {\it primary differential} $\phi$: a meromorphic relative 1-form on $\pi:\cC_{g,\mathsf{n}}\to H_{g,\mathsf{n}}$ satisfying suitable admissibility conditions, classified in five Types I--V in \cite[Lecture~5]{Dubrovin:1994hc}. If $\mu$ is $\lambda$-admissible, the case $\phi \coloneqq \sqrt{\cN} \rd \log \mu$ considered here (an exact third kind differential having at most simple poles at the poles of $\lambda$) is readily seen to satisfy the criteria of Type III given in  \cite[Lecture~5]{Dubrovin:1994hc}, and therefore leads to an honest Frobenius manifold. We will call the marked meromorphic function $\lambda$ a {\it Landau-Ginzburg (LG) superpotential} for the Frobenius manifold, with primary differential $\sqrt{\cN} (\rd\mu)/\mu$.

\subsection{Superpotentials for extended affine Weyl groups} 
\label{sec:supconstr}

We give here a general method for the construction of spectral curves associated to affine relativistic Toda chains, as anticipated in \cite{Brini:2017gfi}, for arbitrary Dynkin types. \\

Let $\omega\in \Lambda_w^+(\RR)$ be the highest weight of a non-trivial irreducible representation $\rho_\omega \in \mathrm{Rep}(\cG_\RR)$ of minimal dimension. In particular, $\rho_\omega$ is quasi-minuscule, 
i.e. all non-zero weights in the weight system $\Gamma(\rho_\omega)$ are in the same irreducible orbit under the action of the Weyl group, and it is minuscule (quasi-minuscule with no zero weights) for all  $\RR \neq B_l$, $E_8$, $F_4$ and $G_2$. Consider the characteristic polynomial of $\mathsf{g}\in \cG$ in the representation $\rho_\omega$,
\bea
\cQ_{\omega}(\chi_1, \dots, \chi_{l_\RR};\mu) &=& \det_{\rho_\omega}{\l(\mathsf{g}-\mu
\mathbf{1}\r)} = \sum_{k=0}^{\dim \rho_\omega} (-\mu)^{(\dim \rho_\omega-k)}
\chi_{\wedge^k \rho_{{\omega}}}(\mathsf{g}),
\label{eq:charpol}
\eea
where the second equality is the co-factor expansion of the determinant. Recall that the representation ring of a simple Lie group is an integral polynomial ring generated by the fundamental representations,
\beq
\chi_{\wedge^k \rho_{\omega}}(\mathsf{g}) = \mathfrak{p}^\omega_k(\chi_1, \dots, \chi_{l_\RR})\in \bbZ[\chi_1, \dots, \chi_{l_\RR}],
\label{eq:chardec}
\eeq
where $\chi_i(\mathsf{g}) \coloneqq \tr_{\rho_{i}}(\mathsf{g})$ is the $i^{\rm th}$ fundamental character. Since $\rho_\omega$ is quasi-minuscule, $\cQ_{\omega}$ factorises as
\beq
\cQ_{\omega}= (1-\mu)^{\mathsf{z}_0} \cQ^{\rm red}_{\omega} = (1-\mu)^{\mathsf{z}_0} \prod_{0\neq \omega' \in \Gamma(\rho_\omega)}\l(\re^{\omega' \cdot h}-\mu\r),
\label{eq:weylrel}
\eeq
where $\mathsf{z}_0$ is the dimension of the zero weight space of $\rho_{\omega}$, and $\re^{\mathsf{h}}$ with $[\re^{\mathsf{h}}]=[\mathsf{g}]$ is a choice of Cartan torus element conjugate to $\mathsf{g}$: in particular, $\mathsf{z}_0=0$ and $\cQ^{\rm red}_{\omega}=\cQ_{\omega}$ for $\RR\neq B_l$, $E_8$, $F_4$, or $G_2$.  

Define now
\beq 
\cP_{\omega}(w_0, \dots, w_{l_\RR+1};\lambda,\mu) \coloneqq
 \cQ_{\omega}^{\mathrm{red}}\l(\chi_i=w_i-\delta_{i \bar k}\frac{\lambda}{w_0};\mu \r),
 \label{eq:shiftfun}
\eeq 
and consider, 
as $w\coloneqq (w_0 ; w_1, \dots, w_{l_\RR}) \in {\, \bbC^*} \times \bbC^{l_\RR}$ varies, the family of plane algebraic curves in $\mathrm{Spec}\bbC[\lambda,\mu]$ with fibre at $w$ given by
$C^{(\omega)}_w \coloneqq \bbV\l(\cP_{\omega} \r)$.
%
We compactify and desingularise the fibres over $w$ by taking the normalisation $\overline{C_w^{(\omega)}}$ of their closure in $\bbP^2$. Marking the $\lambda$-projection $(\lambda, \mu)\to \lambda \in \bbP^1$ and varying $w$ defines a subvariety
 $M_\omega^{\rm LG}$ of the Hurwitz space $H_{g_{\omega},\mathsf{n}_{\omega}}$, where $g_\omega=h^{1,0}\big(\overline{C_w^{(\omega)}}\big)$, and $\mathsf{n}_{\omega}$ records the ramification profile at infinity of the $\lambda$-projection.
 \begin{rmk}
 It is not obvious that the above defines an immersion of $M_\omega^{\rm LG}$ into $H_{g_\omega, \mathsf{n}_\omega}$.
 This will follow from the ``rectification statement'' in \cref{Thm:5.1}, according to which $M_\omega^{\rm LG}$ embeds as a dimension-($l_{\RR}+1$) affine hyperplane into $H_{g_\omega, \mathsf{n}_\omega}$ in the respective flat coordinate systems.
 \end{rmk}
 %
In the next two Sections we will calculate the character relations \eqref{eq:chardec}, and therefore determine explicitly the polynomials $\cP_\omega$.  The following Proposition is an anticipated consequence of this direct calculation.
 
 \begin{prop}
 Let 
 $\widetilde{\cP}^{(j)}_\omega(w_0, \dots, w_{l_\RR}; \mu) \coloneqq [\lambda^j] \cP_{\omega}$ be the $j^{\rm th}$ coefficient of $\cP_{\omega}$ in the variable $\lambda$,  and let $ j_\omega^{\max}\coloneqq \deg_\lambda \cP_{\omega}$. Then $\widetilde{\cP}_\omega^{(j_\omega^{\max})}$ is a product of a monomial $\mu^{a_\omega}$ and cylotomic polynomials $\Phi_{k_i}(\mu)$
\beq
\label{eq:polmodind}
\widetilde{\cP}_\omega^{(j_\omega^{\max})}(w_0, \dots, w_{l_\RR}; \mu) = \mu^{a_\omega} \prod_{i=1}^{b_\omega} \Phi_{k^\omega_i}(\mu),
\eeq 
with $a_\omega \in \bbZ_{>0}$ and $b_\omega, k^\omega_i \in \bbZ_{\geq 0}$. Moreover,
\bea
\label{eq:muzeroes}
\widetilde{\cP}^{(j)}_\omega(w_0, \dots, w_{l_\RR}; 0) &=& \delta_{j0},\\
\lim_{\mu'\to 0}(-\mu')^{\dim\rho_\omega}\widetilde{\cP}^{(j)}_\omega(w_0, \dots, w_{l_\RR}; 1/\mu') &=& \delta_{j0}.
\label{eq:mupoles}
\eea
\label{prop:mu}
\end{prop}
\begin{rmk}
The projection $\lambda: \overline{C_w^{(\omega)}} \to \bbP^1$ can only possibly have poles at $\mu=\infty$ or at the zeroes of $\widetilde{\cP}_\omega^{(j_\omega^{\max})}$. The first equality in the Proposition, \eqref{eq:polmodind}, entails then that  $\delta^{(\mu)}_{\de_{w_i}}\widetilde{\cP}_\omega^{(j_\omega^{\max})} =0$ since the r.h.s. is constant in $w$ at fixed $\mu$. In particular, the ramification profile $\mathsf{n}_{\omega}$ is independent of $w$. \\
The second part of the Proposition implies that the zeroes of $\mu$ must occur at points that are poles of $\lambda$, since by \eqref{eq:muzeroes} the equation $\cP_\omega(\lambda, \mu)|_{\mu=0} = 0$ has no solutions for finite $\lambda$. Likewise, replacing $\mu\to \mu=1/\mu'$ reveals that poles of $\mu$ are also poles of $\lambda$ by \eqref{eq:mupoles}. 
Therefore, $\rd\log \mu$ has at most simple poles at the poles of $\lambda$,  showing in particular that $\mu$ satisfies Property~(iii) in \cref{def:ladm}, with Properties~(i)-(ii) being obvious, and is therefore $\lambda$-admissible. 
\label{rmk:muadm}
\end{rmk}

Following the discussion of \cref{sec:hurgen}, and fixing $\cN_\omega \in \bbC^\star$, we can then define a family of semi-simple, commutative, unital Frobenius algebras on $TM^{\rm LG}_\omega$ via \eqref{eq:etares}-\eqref{eq:cres}:
%
\bea
\label{eq:todaeta}
\eta(\de_{w_i}, \de_{w_j}) &=&  - \cN_\omega \sum_{l}\underset{p_l^{\text{cr}}}{\text{Res}}\frac{\delta^{(\mu)}_{\de_{w_i}}\lambda~
  \delta^{(\mu)}_{\de_{w_j}}\lambda}{\rd \lambda}\l( \frac{\rd\mu}{\mu} \r)^2,  \\
\label{eq:todac}
\eta(\de_{w_i}, \de_{w_j}\cdot \de_{w_k}) &=&  - \cN_\omega
\sum_{l}\underset{p_l^{\text{cr}}}{\text{Res}}\frac{\delta^{(\mu)}_{\de_{w_i}}\lambda~
  \delta^{(\mu)}_{\de_{w_j}}\lambda \delta^{(\mu)}_{\de_{w_k}}\lambda}{\rd \lambda}\l( \frac{\rd\mu}{\mu} \r)^2,\\
\label{eq:todag}
\gamma(\de_{w_i}, \de_{w_j}) &=&  - \cN_\omega \sum_{l}\underset{p_l^{\text{cr}}}{\text{Res}}\frac{\delta^{(\mu)}_{\de_{w_i}}\log \lambda~
  \delta^{(\mu)}_{\de_{w_j}} \log \lambda}{\rd \log \lambda}\l( \frac{\rd\mu}{\mu} \r)^2, 
\eea
where $\{p_l^{\rm cr}\}_l$ are the ramification points of $\lambda :
\overline{C_w^{(\omega)}} \to \bbP^1$. This doesn't yet give a Frobenius manifold, or indeed a Frobenius submanifold of $\HH_{g_\omega,\mathsf{n}_\omega}^{[\mu]}$, as $\eta$ is not guaranteed to be non-degenerate or flat at this stage. The following statement establishes that this is the case. 

\begin{thm}[Mirror symmetry for DZ Frobenius manifolds]
The Landau--Ginzburg formulas \eqref{eq:todaeta}--\eqref{eq:todag} define a
semi-simple conformal Frobenius submanifold $\iota_{\omega}: \cM_{\omega}^{\rm
  LG}=(M_{\omega}^{\rm
  LG}, \eta, e, E, \cdot) \hookrightarrow \HH^{[\mu]}_{g_\omega,\mathsf{n}_\omega}$. In particular, \eqref{eq:todaeta} and \eqref{eq:todag} give flat, non-degenerate metrics on $TM_{\omega}^{\rm LG}$, and the identity and Euler vector
fields read
\beq
e= w_0^{-1} \de_{w_{\bar k}}, \qquad E= w_0 \de_{w_0}.
\label{eq:eE}
\eeq
Furthermore,
\beq
\cM_{\omega}^{\rm LG} \simeq \cM^{\rm DZ}_{\RR}.
\label{eq:geniso}
\eeq
\label{thm:mirror}
\end{thm}

The explicit embedding $\iota_{\omega} : M_{\omega}^{\rm
  LG} \hookrightarrow  H_{g_\omega,\mathsf{n}_\omega}$ is described by the relations \eqref{eq:weylrel} in the character ring of $\cG$, setting the coefficients associated to the interior of the Newton polytope of $\cP_{\omega}^{\rm red}(\lambda,\mu)$ to be the polynomials $\mathfrak{p}^\omega_k(w_1, \dots, w_r)$. \\

The proof of \cref{thm:mirror} requires two key  steps:

\ben
\item
computing the exterior relations \eqref{eq:weylrel} in the Weyl character ring of $\cG$: this was solved for simply-laced cases in \cite{Borot:2015fxa,Brini:2019agj}, and we complete the solution here in full generality;
\item proving that the LG formulas \eqref{eq:todaeta}--\eqref{eq:todag}, combined with the reconstruction theorem \cref{Thm:DZ}, establish the mirror statement of \cref{thm:mirror}.
\een

In the remainder of this Section we perform the first step and construct explicitly the family of LG mirror duals to type-$\RR$ Dubrovin--Zhang Frobenius manifolds. We will then devote \cref{Section:MS} to show how the second step leads to a proof of \cref{thm:mirror}.

\subsection{Superpotentials for classical Lie groups}
In the following we present the construction of the spectral curve for the classical root systems $\RR=A_l$, $B_l$, $C_l$, $D_l$ independently, and show how our construction for a weight $\omega$ corresponding to a minimal-dimensional representation $\rho_\omega$ recovers the mirror results of \cite{Dubrovin:2015wdx} for these cases. We will use the shorthand notation  $\varepsilon_i \coloneqq \chi_{\wedge^i\rho_{\omega}}$ for the exterior characters of  $\rho_\omega$.


\subsubsection{$\RR=A_l$}
The Dynkin diagram for affine $A_l$ is shown in \cref{fig:dynkin}. In this case we can choose any (non-affine) node to be the marked one, since the removal of any node from the corresponding finite Dynkin diagram results in two disconnected $A$-type pieces, with ranks adding up to $l-1$. 

A choice of minimal, nontrivial, irreducible representation $\rho_\omega\coloneqq \rho_1=(\mathbf{l+1})$ for $\mathrm{SL}_\bbC(l+1)$ is the defining $(l+1)$-dimensional representation, the other choice corresponding to its dual representation, $\rho_l =\wedge^l \square$.  We then have that $\varepsilon_i = \chi_i$ for $i =1, \dots, l$, and $\varepsilon_0 = \varepsilon_{l+1} = 1$.
Thus \eqref{eq:charpol} becomes
\begin{equation}
\mathcal{P}_{[10\dots 0]_{A_l}} = \dfrac{(-1)^{\bar{k}} \lambda \, \mu^{\bar{k}}}{w_0} + 1 + (-1)^{l+1}\mu^{l+1} + \sum_{i=1}^{l} (-1)^i w_i \mu^i, 
\label{Eq:Alcurve}
\end{equation}
which defines a family of genus 0 curves. 
Setting  (\ref{Eq:Alcurve}) equal to zero and solving for $\lambda$ gives
\begin{equation}
    \lambda = \dfrac{(-1)^{\bar{k}} w_0(1+(-1)^{l+1}\mu^{l+1} + \sum_{i=1}^{l} (-1)^{i} w_i \mu^i)}{\mu^{\bar{k}}},
    \label{Eq:AlSuppot}
\end{equation}
which is, for every point in the moduli space, a meromorphic function of $\mu$ with poles at $0$ and $\infty$ of orders $\bar k, l+1-\bar k$, respectively. We see that we have $l+1$ parameters $w_0, \cdots, w_l$, and so the resulting Frobenius manifold is $l+1$ dimensional\footnote{This is, in fact,  the case for all DZ-manifolds associated to a simple Lie algebra of rank $l$.}. In particular, it is an $l+1$ dimensional submanifold of the Hurwitz space $H_{0,\mathsf{n}_\omega}$, with ramification profile $\mathsf{n}_\omega = (\bar k-1, l-\bar k)$. This Hurwitz space, however, is of dimension $
2+\bar k-1+l-\bar k = l+1$,
and so the DZ-Frobenius manifold associated to $A_l$ is isomorphic to (a full-dimensional ball inside) its associated Hurwitz space. This, as we will see, will not be the case for the other Dynkin types.

\subsubsection{$\RR=B_l$}

For $l>2$, the minimal, nontrivial, irreducible representation of $\mathrm{Spin}(2l+1)$ is the defining representation $\rho_1=(\mathbf{2l+1})$ of the special orthogonal group in $(2l+1)$-dimensions\footnote{For $l=2$, the 4-dimensional spin representation $\rho_2$ is both minimal and minuscule. It is also isomorphic to the vector representation $\rho_1$ of $C_2$, which is included in the discussion of the next section.}, which is the irreducible representation with highest weight $\omega_1$. In this case the marked node is $\bar k =l-1$, as depicted in \cref{fig:dynkin}.

For $i<l$, the $i^{\rm th}$ fundamental representation $\rho_i$ of $B_l$ is the $i^{\rm th}$ exterior power of $(\mathbf{2l+1})$. For $i=l$, the decomposition of the tensor square of $\rho_l$ leads to
\begin{equation}
  \mathfrak{p}^{[10\dots 0]_{B_l}}_i =
  \begin{cases}
                                   \chi_i & \text{if $i<l$}, \\
                                   \chi_l^2 - \sum_{j=0}^{l-1}\chi_j & \text{if $i = l$}.\\
  \end{cases}
\end{equation}

Together with the self-duality relation $\mathfrak{p}^{[10\dots 0]_{B_l}}_i = \mathfrak{p}^{[10\dots 0]_{B_l}}_{2l+1-i}$, we get that the curve is the zero locus of 
\begin{equation}
  \mathcal{P}_{[10\dots 0]_{B_l}} = \dfrac{(-1)^l (\mu-1)(\mu+1)^2 \mu^{l-1} \lambda}{w_0} + \sum_{i=0}^{l} (-1)^i\mu^i(1-\mu^{2(l-i)+1})\varepsilon_i,
  \label{Eq:Blcurve}
\end{equation}
with 
\beq 
\varepsilon_i =
\begin{cases}
1 & \text{if } i=0, \\
w_i & \text{if } 1<i<l, \\
w_l^2 - \sum_{j=0}^{l-1}w_j & \text{if } i=l.
\end{cases}
\eeq 
Note that (\ref{Eq:Blcurve}) has a factor of $(\mu-1)$, since $(\mathbf{2l+1})$ has a one-dimensional zero weight space. Setting to zero the reduced characteristic polynomial $\mathcal{P}^{\rm red}_{[10\dots 0]_{B_l}}=\mathcal{P}_{[10\dots 0]_{B_l}}/(\mu-1)$ gives
\begin{equation}
    \lambda = \dfrac{(-1)^lw_0}{\mu^{l-1}(\mu+1)^2}\sum_{j=0}^{2l}\mu^j \left( \sum_{i=0}^{\text{min}(j, 2l-j)} (-1)^i \varepsilon_i\right).
    \label{Eq:BlSuppot}
\end{equation}
 For each point in the moduli space, this is a rational function in $\mu$ with three poles at $0, -1$, and $ \infty$ of orders $l-1, 2, l-1$, respectively. Hence, $M_{[10\dots 0]_{B_l}}^{\rm LG}$ is a sublocus in the $(2l+1)$-dimensional Hurwitz space $H_{0,\mathsf{n}_\omega}$, with $\mathsf{n}_\omega = (l-2, 1, l-2)$. The latter carries an involution given by sending $\mu \to 1/\mu$, and $M_{[10\dots 0]_{B_l}}^{\rm LG}$ is characterised as the $(l+1)$-dimensional stratum that is fixed by the involution.

\subsubsection{$\RR=C_l$}
The minimal, nontrivial, irreducible representation for $\mathrm{Sp}(2l)$ is the defining representation $\rho_1=(\mathbf{2l})$ of the rank $2l$ symplectic group. Again, this representation corresponds to the one in which $\omega_1$ is highest weight. The canonical node is the $l^{\text{th}}$ node, as shown in \cref{fig:dynkin}.

The exterior powers $\wedge^i\rho_1$  are reducible, with only fundamental representations appearing as direct summands in their decomposition, giving the character relations

\begin{equation}
     \mathfrak{p}^{[10\dots 0]_{C_l}}_i =
  \begin{cases}
                                  \sum_{j=0}^{\frac{i}{2}} \chi_{2j} & \text{for } i~{\rm even}, \\
                                    \sum_{j=0}^{\frac{i-1}{2}}\chi_{2j+1} &  \text{for } i~{\rm odd}.	 \\
  \end{cases}
\end{equation}

From this, and the fact that $\bar k = l$ for $C_l$, we see that the characteristic polynomial \eqref{eq:charpol} is
\begin{equation}
    \mathcal{P}_{[10\dots 0]_{C_l}} = \dfrac{(-1)^l \mu^l \lambda}{w_0} + \sum_{i=0}^{l-1}(-1)^i \varepsilon_i \mu^i(1+\mu^{2(l-i)})+ (-1)^l \varepsilon_l \mu^l,
    \label{Eq:Clcurve}
\end{equation}
with $\varepsilon_{2i}=\sum_{j=0}^{i} \chi_{2j}$, $\varepsilon_{2i+1}=\sum_{j=0}^{i} \chi_{2j+1}$ (and $\varepsilon_0 = 1$ as usual). Setting equal to zero and solving for $\lambda$ gives
\begin{equation}
    \lambda = \dfrac{(-1)^{l-1} w_0 \left(\sum_{i=0}^{l-1}(-1)^i \varepsilon_i \mu^i(1+\mu^{2(l-i)})+ (-1)^l \varepsilon_l \mu^l \right)}{\mu^l}, 
    \label{Eq:ClSuppot}
\end{equation}
which is a rational function in $\mu$ with two poles at $0$ and $\infty$ both of order $l$. Hence, the associated covering Hurwitz space is $H_{0,\mathsf{n}_\omega}$, with $\mathsf{n}_\omega = (l-1, l-1)$, which has dimension $2l$. As before, there is an involution on this Hurwitz space  sending $\mu \to 1/\mu$, with $M_{[10\dots 0]_{C_l}}^{\rm LG}$ being the $(l+1)$-dimensional stratum that is fixed by it.

\subsubsection{$\RR=D_l$}
For $l\geq 4$, the minimal, nontrivial, irreducible representation of $\mathrm{Spin}(2l)$ is the defining vector\footnote{For $l=4$, this can be any of the irreducible 8-dimensional representations, related by triality.} representation $\rho_1=(\mathbf{2l})_v$ of $\mathrm{SO}(2l)$, which corresponds to the irreducible representation with highest weight $\omega_1$. The canonical node is the one with label $l-2$, as shown in Figure \ref{fig:dynkin}.

The character relations for $D_l$ were found in \cite{Borot:2015fxa} to be
\bea
  \mathfrak{p}^{[10\dots 0]_{D_l}}_{i}  &=& \chi_i, \quad i<l-1, \\
  \mathfrak{p}^{[10\dots 0]_{D_l}}_{l-1}  &=& \chi_{l-1}\chi_{l} -
  \begin{cases}
                                  \sum_{j=0}^{\frac{l}{2}-2} \chi_{2j+1} & \text{if $l$ is even,} \\
                                 \sum_{j=0}^{\frac{l-3}{2}} \chi_{2j} & \text{if $l$ is odd,} \\
  \end{cases} \\
  \mathfrak{p}^{[10\dots 0]_{D_l}}_{l} &=&  \chi_{l-1}^2+ \chi_{l}^2 - 2
        \begin{cases}
                                   \sum_{j=0}^{\frac{l}{2}-1} \chi_{2j} & \text{if $l$ is even,} \\
                                  \sum_{j=0}^{\frac{l-3}{2}}\chi_{2j+1} & \text{if $l$ is odd,} \\
  \end{cases}
\eea
and $\mathfrak{p}^{[10\dots 0]_{D_l}}_{i} =  \mathfrak{p}^{[10\dots 0]_{D_l}}_{2l-i}$, so that
\begin{equation}
    \mathcal{P}_{[10\dots 0]_{D_l}} =  \dfrac{(-1)^l\mu^{l-2} (\mu^2-1)^2 \lambda}{w_0} + \sum_{i = 0}^{l-1} (-1)^i \varepsilon_i \mu^i (1+\mu^{2(l-i)}) + (-1)^l \mu^l \varepsilon_l,
    \label{Eq:Dlcurve}
\end{equation}
where as before we denote $\varepsilon_i(w_1, \dots, w_l)=  \mathfrak{p}^{[10\dots 0]_{D_l}}_{i}(\chi_j=w_j)$, with $\varepsilon_0 = 1$. Setting (\ref{Eq:Dlcurve}) equal to zero and solving for $\lambda$ gives
\begin{equation}
    \lambda = (-1)^{l-1}\dfrac{w_0\left(\sum_{i = 0}^{l-1} (-1)^i \varepsilon_i \mu^i (1+\mu^{2(l-i)}) + (-1)^l \mu^l \varepsilon_l \right)}{\mu^{l-2}(\mu^2-1)^2}, 
    \label{Eq:DlSuppot}
\end{equation}
which, for every point $w$, is a rational function in $\mu$ with four poles at $0$, $\infty$, $1$, $-1$ of orders $l-2$, $l-2$, $2$, $2$, respectively. Hence, the parent Hurwitz space is $H_{0;\mathsf{n}_\omega}$, where $\mathsf{n}_\omega = (l-3, l-3, 1, 1)$, which has dimension $2l+2$. Once more this hosts an involution obtained by sending $\mu \to 1/\mu$, identifying $M_{[10\dots 0]_{D_l}}^{\rm LG}$ as its fixed locus.

\subsection{Comparison with the Dubrovin--Strachan--Zhang--Zuo construction}
\label{sec:DSZZ}

For the case of $\mathcal{R} = A_{l}$, an LG-superpotential was already found in the original paper \cite{Dubrovin:1994hc}, with\footnote{To relate this to the expression in \cite{Dubrovin:1994hc} let $\mu = e^{i \phi}$.}
\begin{equation}
\lambda = \sum_{j=0}^{k+m}b_j \mu^{m-j} ,
    \label{Eq:AlDZsupPot}
\end{equation}  where $b_j \in \mathbb{C}$ and $b_0 b_{k+m}\neq 0$. Moreover, in \cite{Dubrovin:2015wdx}, the authors construct a three-integer parameter family of superpotentials of the form\footnote{To relate this to the expression in \cite{Dubrovin:2015wdx} let $\mu = e^{2 i \phi}$.}

\begin{equation}
 \lambda^{\rm DSZZ}(l, k, m) =    \dfrac{4^m \mu^m \sum\limits_{j=0}^{l}a_j 2^{-2(-j+k+m)} \left(\frac{\mu+1}{\sqrt{\mu}}\right)^{2(-j+ k+m)}}{(\mu-1)^m}.
 \label{Def:DSZZSuppot}
\end{equation}

The key result of \cite{Dubrovin:2015wdx} is an identification of \eqref{Def:DSZZSuppot} with a superpotential for a Dubrovin--Zhang Frobenius manifold  of type $B_l$, $C_l$, $D_l$, possibly with a non-canonical choice of marked node in the Dynkin diagram, for suitable choices of $(l,k,m)$. In particular, the mirror theorem for the canonical label $\bar k$ is obtained by setting $(l, k, m)$ equal to $(l, l-1, 1), (l, l, 0),$ and $(l, l-2, 1)$, respectively. We shall now show that the results of  \cite{Dubrovin:1994hc} and \cite{Dubrovin:2015wdx} coincide with our construction in the previous section.

\subsubsection{$\RR=A_l$}

By using the fact that $k+m=l+1$,  (\ref{Eq:AlDZsupPot}) becomes
\begin{equation}
  \lambda =  \dfrac{\sum_{j=0}^{l+1} b_j \mu^{l+1-j}}{\mu^{k}}, 
\end{equation}
which is the same as (\ref{Eq:AlSuppot}) by
\begin{equation}
        b_i = (-1)^k w_0\begin{cases}
                                  1 & \text{if $i=0$}, \\
                                   (-1)^{l+1}  & \text{if $i = l+1$}, \\
                                   w_i & \text{otherwise}.\\
  \end{cases}
\end{equation}

\subsubsection{$\RR=B_l$}

In the case of $B_l$ we consider (\ref{Def:DSZZSuppot}) with $k = l-1$, $m = 1$, which is
\begin{equation}
    \lambda^{\rm DSZZ}(l, l-1, 1) = \dfrac{4\mu \sum_{j=0}^{l}a_j 2^{-2(l-j)}\left(\sqrt{\mu}  + \dfrac{1}{\sqrt{\mu}} \right)^{2(l-j)}}{(\mu-1)^2}.
    \label{Eq:DSZZB}
\end{equation}
Simplifying (\ref{Eq:DSZZB}) gives: 
\begin{equation}
     \dfrac{w_0}{(\mu-1)^2}\sum_{j=0}^{l} \dfrac{a_j 2^{-2(l-j-1)} (\mu+1)^{2(l-j)}}{w_0 \,  \mu^{l-j-1}} = \dfrac{(-1)^l w_0 }{(\mu+1)^2 \mu^{l-1}} \sum_{\beta = 0}^{2l} (-1)^\beta C_{\beta} \mu^{\beta} ,
\end{equation}
where we have used the binomial theorem and let $\mu \mapsto -\mu$,  with 
\begin{equation}
    C_\beta = \dfrac{(-1)^l}{w_0} \sum_{j, \alpha | j+ \alpha = \beta }a_j 2^{-2(l-j-1)} \binom{2(l-j)}{\alpha} =  \dfrac{(-1)^l}{w_0}\sum_{j=0}^{\beta} a_j 2^{-2(l-j-1)}\binom{2(l-j)}{\beta-j}.
\end{equation}
 
 On the other hand, the superpotential constructed from the spectral curve, (\ref{Eq:BlSuppot}),  is given by  
\begin{equation}
    \lambda_{B_l} = \dfrac{(-1)^lw_0}{\mu^{l-1}(\mu+1)^2}\sum_{j=0}^{2l}\mu^j \left( \sum_{i=0}^{\text{min}(j, 2l-j)} (-1)^i \varepsilon_i\right), 
\end{equation}

which we can write as 
 \begin{equation}
      \lambda_{B_l} = \dfrac{(-1)^lw_0}{\mu^{l-1}(\mu+1)^2}\sum_{j=0}^{2l}b_j\mu^j,
 \end{equation}
with $b_{j}  = \sum_{i=0}^{\text{min}(j, 2l-j)} (-1)^i \varepsilon_i$; note that $b_j = b_{2l-j}$. This means that we want to match up
 $b_i =  (-1)^i C_i$, hence 
\beq
  \sum_{j=0}^{\text{min}(i, 2l-i)} (-1)^j \varepsilon_j = \dfrac{(-1)^{l+i}}{w_0}\sum_{j=0}^{i} a_j 2^{-2(l-j-1)} \binom{2(l-j)}{i-j}. 
\label{Eq:Blmatchup}
\eeq

We claim that 
\begin{equation}
    \varepsilon_i = \dfrac{(-1)^{l}}{w_0}\sum_{j=0}^{l}a_j 2^{-2(l-i-1)}\binom{2l-2j+1}{i-j}.
    \label{Eq:Blepsilon}
\end{equation}

 \begin{proof}
  The $i=0$ case is clear giving $\varepsilon_0  = \dfrac{(-1)^l}{w_0} 2^{-2(l-1)}a_0$ obtained by taking $j = 0$.  
  
So suppose $0<i \leq l$. Then from (\ref{Eq:Blmatchup}) we get
\bea
 \varepsilon_i & = & \dfrac{(-1)^l}{w_0}\sum_{j=0}^i a_j 2^{-2(l-j-1)}\binom{2(l-j)}{i-j} + \dfrac{(-1)^l}{w_0}\sum_{j=0}^{i-1}a_j 2^{-2(l-j-1)} \binom{2(l-j)}{i-1-j} \nn \\
     &=& \dfrac{(-1)^l}{w_0}\left(a_i2^{-2(l-i-1)} + \sum_{j=0}^{i-1} a_j 2^{-2(l-j-1)}\left(\binom{2(l-j)}{i-j} + \binom{2(l-j)}{i-1-j}  \right) \right).\nn \\
\eea
 Furthermore, 
\begin{align*}
    \binom{2(l-j)}{i-j}+\binom{2(l-j)}{i-1-j}  & \, = \dfrac{(2(l-j))!}{(i-j)!(2l-i-j)!} + \dfrac{(2(l-j))!}{(i-j-1)!(2l-i-j+1)!} \\
    & \, = \binom{2l-2j+1}{i-j},
\end{align*}
which gives the result for $i \leq l$.  Hence, since $\varepsilon_i = \varepsilon_{2l+1-i}$, we have (\ref{Eq:Blepsilon}) for all $i$.

 \end{proof}


\subsubsection{$\RR=C_l$}

In the case of $C_l$, we consider (\ref{Def:DSZZSuppot}) with $k = l, m = 0$ which is
\begin{equation}
    \lambda^{\rm DSZZ}(l, l, 0) =  \sum_{j=0}^{l}a_j 2^{-2(l-j)}\left(\sqrt{\mu}  + \dfrac{1}{\sqrt{\mu}} \right)^{2(l-j)}. 
    \label{Eq:ClDZSuppot}
\end{equation}
Simplifying (\ref{Eq:ClDZSuppot}) gives: 
 \begin{equation}
   \dfrac{(-1)^{l-1} w_0}{\mu^l} \sum_{j=0}^{l}\dfrac{(-1)^{l-1}a_j 2^{-2}(l-j) (\mu+1)^{2(l-j)}}{w_0 \mu^{-j}} =   \dfrac{(-1)^{l-1} w_0}{\mu^l} \sum_{\beta=0}^{2l} C_{\beta} \mu^\beta  ,
 \end{equation}
where we again have used the binomial theorem, and with
\begin{equation}
    C_\beta = \dfrac{(-1)^{l-1}}{w_0}\sum_{j=0}^{\beta} a_j 2^{-2(l-j)} \binom{2(l-j)}{\beta-j}. 
\end{equation}
Thus, the equivalence is obtained in the case of $C_l$ by letting 

\begin{equation}
    \varepsilon_i \mapsto \dfrac{(-1)^{l+i-1}}{w_0} \sum_{j=0}^{i} a_j 2^{-2(l-j)} \binom{2(l-j)}{i-j}.
\end{equation} 

\subsubsection{$\RR=D_l$}

For $D_l$, we want to consider (\ref{Def:DSZZSuppot}) with $k = l-2, m=1$, which is of the form

\begin{equation}
    \lambda^{\rm DSZZ}(l, l-2, 1) = \dfrac{4\mu \sum_{j=0}^{l} a_j 2^{-2(l-j-1)}\left( \sqrt{\mu} + \dfrac{1}{\sqrt{\mu}}\right)^{2(l-j-1)}}{(\mu-1)^2}. 
\end{equation}

This is equivalent to
\begin{equation}
\dfrac{(-1)^{l-1} w_0}{(\mu-1)^2 \mu^{l-2} (\mu+1)^2} \sum_{j=0}^{l} \dfrac{(-1)^{l-1}a_j(\mu+1)^{2(l-j)}}{ 2^{2(l-j-2)}w_0 \mu^{-j}} = \dfrac{(-1)^{l-1} w_0}{\mu^{l-2}(\mu^2-1)^2}\sum_{\beta=0}^{2l} C_\beta \mu^{\beta},
\end{equation}
with 
\begin{equation}
    C_{\beta} = \dfrac{(-1)^{l-1}}{w_0} \sum_{}^{} a_j 2^{-2(l-j-2)} \binom{2(l-j)}{\beta-j}, 
\end{equation}
where, again, the binomial theorem has been used. Hence, the map 
\begin{equation}
    \varepsilon_i \mapsto \dfrac{(-1)^{l+i-1}}{w_0} \sum_{j=0}^{i} a_j 2^{-2(l-j-2)} \binom{2(l-j)}{\beta-j}
\end{equation}
gives the equivalence.

\subsection{Superpotentials for exceptional Lie groups}
We present here the construction of the spectral curve for the exceptional types $E_6$, $E_7$, $F_4$, and  $G_2$. The $E_8$ case was treated extensively in  \cite{Brini:2017gfi} and \cite{Brini:2019agj}, and we only give a very brief presentation here.   

As for the classical cases, the construction of the superpotential hinges on determining the character relations \eqref{eq:chardec} for all $k$. Explicitly, for all dominant weights $\varpi=\sum_i \varpi_i \omega_i\in \Lambda^+_w(\RR)$ we should determine $N^{(\omega,k)}_\varpi \in \bbZ$ such that
\beq
\mathfrak{p}^\omega_k = \sum_{\varpi \in \Lambda^+_w(\RR)} N^{(\omega,k)}_\varpi \prod_{i=1}^{l_\RR} \chi_i^{\varpi_i}.
\label{eq:decnum}
\eeq
\begin{defn}
A set of dominant weights $\Pi_{\omega} \subset  \Gamma(\wedge^\bullet \rho_\omega)$ is called \emph{pivotal} for $\omega$ if, $\forall \varpi' \in \Gamma(\wedge^\bullet \rho_\omega)$,  $\exists \varpi \in \Pi_{\omega}$ such that $\varpi' \preceq \varpi$.
\label{def:pivotset}
\end{defn}

Here $\varpi' \preceq \varpi$ denotes the canonical partial ordering of weights, i.e. $\varpi' \preceq \varpi \Leftrightarrow \varpi-\varpi' = \sum_{i=1}^{l_\RR} n_i \a_i$ with $n_i \geq 0$.  \cref{def:pivotset} then states that a set $\Pi_\omega$ of dominant weights is pivotal for a representation $\rho_\omega$ if it is contained in the weight system of the exterior algebra of $\rho_\omega$, and all the other weights in $\Gamma(\wedge^\bullet \rho_\omega)$ are lower, in the partial order, than some of element of $\Pi_\omega$.\\
It will be useful, in the following, to consider pivotal sets that are as small as possible. As for the classical cases, we take $\omega$ to sit in a minimal non-trivial orbit of $\cW_\RR$, as described in \cref{tab:minex}.

\begin{example}
Consider the decomposition of the exterior algebra of $\rho_\omega$ into irreducible representations,
\beq
\Gamma(\wedge^\bullet \rho_\omega) = \bigoplus_{\omega'} \mathrm{Mult}_{\wedge^\bullet \rho_\omega}(\omega') \rho_{\omega'},
\label{eq:irrepext}
\eeq 
and let $\Pi_\omega$ denote the finite set of dominant weights appearing on the r.h.s. with non-zero multiplicity. Then $\Pi_\omega$ is pivotal for $\omega$, although not necessarily of minimal cardinality. Suppose e.g. $\omega=\alpha_0$ is the highest root, so that $\rho_\omega= \mathfrak{g}$ is the adjoint representation. Then the dominant weight given by twice the Weyl vector, $2\mathsf{w}=2 \sum_i \omega_i=\sum_{\a>0} \a$, appears with multiplicity $\mathrm{Mult}_{\wedge^\bullet \mathfrak{g}}(\mathsf{w})=1$ in \eqref{eq:irrepext}, and it is higher than any other highest weight in the decomposition of $\wedge^\bullet \rho_\omega$ into irreducibles. In this case, $\Pi_{\alpha_0}=\{\mathsf{w}\}$ is pivotal and of minimal cardinality. 
\end{example}

\begin{table}[!h]
\begin{tabular}{|c|c|c|c|}
\hline
$\RR$ & $\omega$ & $\rho_\omega$ & $|\mathfrak{I}_\omega|$ \\
\hline
\hline
$E_6$ & $[100000]$ (resp. $[000010]$) & $\mathbf{27}_{E_6}$ (resp. $\overline{\mathbf{27}}_{E_6})$ & $111$ \\
\hline
$E_7$ & $[0000010]$ & $\mathbf{56}_{E_7}$ & $907$\\
\hline
$E_8$ & $[00000010]$ & $\mathbf{248}_{E_8}$ & $950077$ \\
\hline
$F_4$ & $[0001]$ & $\mathbf{26}_{F_4}$ & $74$ \\
\hline
$G_2$ & $[10]$ & $\mathbf{7}_{G_2}$ & $5$ \\
\hline
\end{tabular}
\caption{Highest weights of minimal representations for exceptional root systems. The last column indicates the cardinality of their sets of admissible exponents (\cref{def:admexp}).}
\label{tab:minex}
\end{table}

\begin{lem}
The sets of dominant weights
\beq
\Pi_{\omega} \coloneqq
\begin{cases}
\big\{ [010120], [120010], [200200], [110110], [001030],  & \omega=[100000]_{E_6}, \\
[030000] , [020020], [000041], [000050]\big\}, & \\
\big\{[0002022], [0001113], [0100132], [0101041], [1001051], & \\
[1001061], [0011031], [0010070], [0000204], [0110050],  & \omega=[0000010]_{E_7}, \\
[0010070], [1100070], [1000090], [0003011], [0020040], & \\
[0004000], [0000105], [00000100], [0000006]\big\}, & \\
\{ [22222222]\}, & \omega=[00000010]_{E_8},  \\
\{ [0022]\}, & \omega=[0001]_{F_4},  \\
\{ [20]\}, & \omega=[10]_{G_2}, 
\end{cases}
\label{eq:pivot}
\eeq
are pivotal and of minimal cardinality for $\omega$.
\label{lem:pivot}
\end{lem}
For $\RR = E_8$, $F_4$ and $G_2$ the weight system of $\rho$ is the set of short roots of $\RR$, and the single element of its minimal pivotal set is then the sum of the positive short roots. For $\RR=E_6$, $E_7$ the pivotal sets of minimal cardinality in \eqref{eq:pivot} can be constructed by direct inspection of the weight system.
\begin{defn}
Let $\Pi_\omega$ be as in \cref{lem:pivot}.  We call the finite set
  \beq
  \mathfrak{I}_\omega:= \l\{\iota \in (\bbZ_{\geq 0})^{l_\RR} \bigg| \exists~\varpi=\sum_k \varpi_k \omega_k \in \Pi_\omega \mathrm{~s.t.~}\sum_k (C_\RR)^{-1}_{jk}(\iota_k -\varpi_k)\in \bbZ_{\leq 0} \r\}
  \label{eq:admexp}
  \eeq
the  set of \emph{admissible exponents} of the exterior algebra $\wedge^\bullet \rho_\omega$. 
\label{def:admexp}
\end{defn}
In other words, $\iota$ is admissible if and only if the corresponding dominant weight $\varpi_\iota \coloneqq \sum_i \iota_i \omega_i \preceq \varpi'$ for some weight $\varpi'$ in the minimal pivotal sets of \cref{lem:pivot}. We will use the short-hand notation $\varpi_\iota \preceq \Pi_\omega$ when this happens. The terminology is justified by the following

%
%
\begin{lem}
Let $\iota \in (\bbZ_{\geq 0})^{l_\RR}$, 
 $\iota \notin \mathfrak{I}_\omega$. Then 
$N^{(\omega,k)}_ {\varpi_\iota}=0$ for all $k$.
\label{lem:vanish}
\end{lem}
\begin{proof}
Consider the representation space version of \eqref{eq:decnum},
\beq
\wedge^k \rho_\omega = \bigoplus_{\iota} N^{(\omega,k)}_{ \varpi_\iota} \bigotimes_{i=1}^{l_\RR} \rho_{i}^{\iota_i}.
\label{eq:decnumrep}
\eeq
 By \cref{def:pivotset}, we have $\varpi \preceq \Pi_\omega$ for all $\varpi \in \Gamma(\wedge^k \rho_\omega)$, and furthermore, by \cref{def:admexp}, the $l_\RR$-tuple of its coefficients in the basis of fundamental weights is admissible. 
 Equivalently, if $\iota' \in (\bbZ_{\geq 0})^{l_\RR} \setminus \mathfrak{I}_\omega$ is not admissible, then the corresponding dominant weight $\varpi_{\iota'} \coloneqq \sum_i \iota'_i \omega_i \notin \Gamma(\wedge^k \rho_\omega)$ is not in the exterior algebra: taking multiplicities of \eqref{eq:decnumrep} at  $\varpi_{\iota'}$ then gives 
 \beq 
0= \mathrm{Mult}_{\wedge^k \rho_\omega} (\varpi_{\iota'}) = \sum_\iota N^{(\omega,k)}_{ {\varpi_\iota}} \mathrm{Mult}_{\otimes_i \rho_{i}^{\iota_i}} (\varpi_{\iota'}).
\label{eq:multzero}
 \eeq 
Further notice that
\beq 
\mathrm{Mult}_{\otimes_i \rho_{i}^{\iota_i}} (\varpi_{\iota'}) = 
\begin{cases}
1\,, & \iota'=\iota, \\
0\,, & \iota' \neq \iota \hbox{ and } \iota'-\iota \notin (\bbZ_{\geq 0})^{l_\RR}.
\end{cases}
\label{eq:multtens}
\eeq 
Both equalities in \eqref{eq:multtens} are consequences of the (elementary) fact that the weights of a tensor product representation are given by the sums of the weights of its factors, $\Gamma(V \otimes W) = \{v+w| v \in \Gamma(V), w \in \Gamma(W)\}$. In particular, the first equality  states that the weight space of $\varpi_{\iota}$ in $\otimes_i \rho_{i}^{\iota_i}$ is 1-dimensional, corresponding to the unique decomposition of $\varpi_\iota = \sum_i \iota_i \omega_i$ as a sum of highest weights for each factor; and the second is the assertion that $\varpi_{\iota'} \notin \Gamma(\otimes_i \rho_{i}^{\iota_i})$ when $\iota'_j>\iota_j$ for some $j$. As a consequence, \eqref{eq:multzero} is a linear homogeneous system in the unknowns $N_ {\varpi_\iota}^{(\omega, k)}$ with trivial kernel: the integral matrix $\mathsf{M}$ with coefficients $(\mathsf{M})_{\iota',\iota} \coloneqq \mathrm{Mult}_{\otimes_i \rho_{i}^{\iota_i}} (\varpi_{\iota'})$, in a choice of basis where the column $(\mathsf{M})_{\iota'}$ is to the left of the column $(\mathsf{M})_{\iota}$ whenever $\iota'-\iota \in (\bbZ_{\geq 0})^{l_\RR}$, is upper-triangular and with ones on the diagonal. The claim then follows.

\end{proof}

By \cref{lem:vanish}, the sum in the polynomial character decomposition \eqref{eq:decnum} localises on the set of admissible exponents, whose cardinality $|\mathfrak{I}_\omega|$ is shown in \cref{tab:minex}.

\begin{cor}  Fix a bijection $\sigma: \{1, \dots, |\mathfrak{I}_\omega|\} \to \mathfrak{I}_\omega$ inducing a total order on the set of admissible exponents, and let 
$\mathsf{N}_\omega \in \mathrm{Mat}_{|\mathfrak{I}_\omega| \times \dim{\rho_\omega}}(\bbZ)$ with 
$(\mathsf{N}_\omega)_{lk} \coloneqq N^{(\omega,k)}_{\varpi_{\sigma(l)}}$. Then, there exist explicit rational matrices $\mathsf{A}_\omega \in \mathrm{GL}_{|\mathfrak{I}_\omega|}(\bbQ)$, $\mathsf{B}_\omega \in \mathrm{Mat}_{|\mathfrak{I}_\omega| \times \dim{\rho_\omega}}(\bbQ)$ such that $\mathsf{N}_\omega=\mathsf{A}_\omega \mathsf{B}_\omega$. In particular, $\mathsf{N}_\omega$ is explicitly computable for all $\omega$.
\label{cor:nomega}
\end{cor}

\begin{proof}
 Fix 
$\cQ \coloneqq \{(Q^{(\kappa)}_1, \dots, Q^{(\kappa)}_{l_\RR}) \in \bbQ^{l_\RR}\}_{\kappa =1}^{|\mathfrak{I}_\omega|}$ rational points in $\cT_\RR$ in general position, and define
\bea
(\mathsf{D_\omega})_{\iota,\kappa} & \coloneqq & \prod_{i=1}^{l_\RR} \chi_i^{\iota_i}(Q^{(\kappa)}_1, \dots, Q^{(\kappa)}_{l_\RR}), 
\label{eq:domega}
\\
(\mathsf{B_\omega})_{\kappa,l} & \coloneqq & \chi_{\wedge^l \rho_{\omega}}(Q^{(\kappa)}_1, \dots, Q^{(\kappa)}_{l_\RR}).
\label{eq:bomega}
\eea  
Evaluating \eqref{eq:decnum} at $\cQ$ amounts to saying that
\beq 
\mathsf{D_\omega} \mathsf{N_\omega}=\mathsf{B_\omega}.
\label{eq:linsys}
\eeq 
The identification of the Weyl character ring as an integral polynomial ring generated by the fundamental characters, \eqref{eq:chardec}, assures that $\det\mathsf{D_\omega} \neq 0$  for general $\cQ$, and \eqref{eq:linsys} is a rank-$|\mathfrak{I}_\omega|$ linear problem over the rationals. We then have $\mathsf{A}_\omega = \mathsf{D}_\omega^{-1}$, and furthermore, the integrality of the coefficients in \eqref{eq:chardec} assures that $\mathsf{N}_\omega \in \bbZ$. Furthermore, given any such $\cQ$, the rational matrices $\mathsf{A}_\omega$, $\mathsf{B}_\omega$, and $\mathsf{N}_\omega$ are computable in an entirely explicit manner, as follows: the fundamental characters on the r.h.s. of \eqref{eq:domega} are given as
\beq 
\chi_i(Q^{(\kappa)}_1, \dots, Q^{(\kappa)}_{l_\RR}) = \sum_{\omega' \in \Gamma(\rho_i)} \prod_{j=1}^{l_\RR} \l(Q^{(\kappa)}_j\r)^{\omega'_j}, 
\eeq 
and their value can be computed from the known expressions of the elements of the fundamental weight systems $\Gamma(\rho_i)$, $i=1,\dots, l_\RR$. The exterior characters in \eqref{eq:bomega} can similarly be computed from the knowledge of $\Gamma(\rho_\omega)$ alone to evaluate the power sum virtual characters of $\rho_\omega$, 
\beq 
\chi_{\rho_\omega}((Q^{(\kappa)}_1)^n, \dots, (Q^{(\kappa)})^n_{l_\RR}) = \sum_{\omega' \in \Gamma(\rho_\omega)} \prod_{j=1}^{l_\RR} \l(Q^{(\kappa)}_i\r)^{n \omega'_j},
\eeq 
from which the exterior characters $\chi_{\wedge^k \rho_\omega}$ can be recovered using the Girard--Newton identities,
\beq 
k \chi_{\wedge^k \rho_\omega}= \sum_{n=1}^k(-1)^{n-1} \chi_{\wedge^{k-n} \rho_\omega}\chi_{\rho_\omega}.
\eeq 
The integral linear system \eqref{eq:linsys} can then be efficiently solved explicitly for $\mathsf{N}_\omega$  using Dixon's $p$-adic lifting algorithm.
\end{proof}
\medskip

Using \eqref{eq:charpol}--\eqref{eq:shiftfun}, \eqref{eq:decnum} and the complete calculation of the coefficients $N_ {\varpi_\iota}^{(\omega,k)}$ from \cref{lem:vanish,cor:nomega} we can construct Landau--Ginzburg superpotentials for all $\RR$, as we now describe for the remaining exceptional cases.

\subsubsection{$\RR=E_6$}

The Dynkin diagram for the affine $E_6$ root system is given in \cref{fig:dynkin}, for which the canonical label is $\bar k=3$. In this case there are two nontrivial minimal irreducible representations, the 27-dimensional fundamental representation $\rho_{1}=(\mathbf{27})$ with highest weight $\omega_1$ and its dual representation $\rho_{5}=(\overline{\mathbf{27}})$ with highest weight $\omega_5$, related by complex conjugation. \\

\begin{figure}[!h]
\includegraphics[scale=.6]{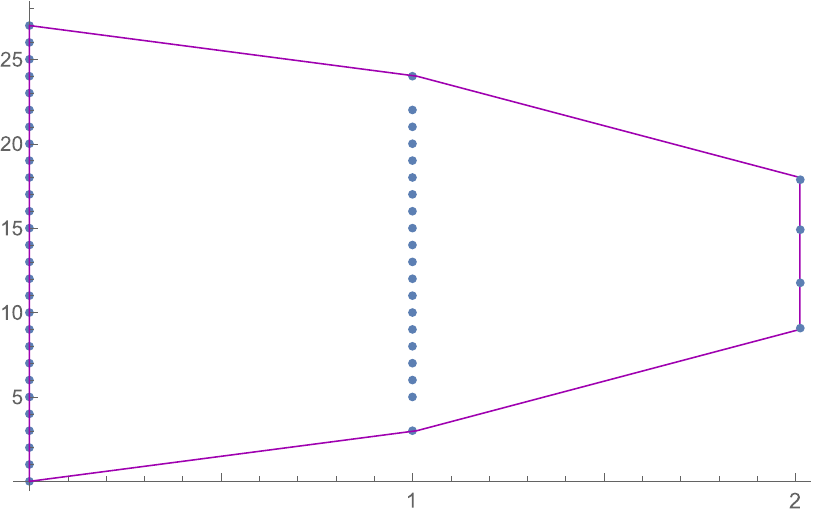}
\caption{Newton polygon for the $E_6$-spectral curve.}
\label{Fig:E6Newt}
\end{figure}

The character relations \eqref{eq:decnum} are given explicitly for $\omega=\omega_1$ in the appendix.  The resulting family of spectral curves has fibres which are hyperelliptic curves of genus 5, with Newton polygon as shown in \cref{Fig:E6Newt}, and ramification profile over $\infty$ 
  
\begin{equation}
 \left( \, \overbrace{3,6}^{\mu = 0}, \, \overbrace{6,3}^{\mu = \infty}, \, \overbrace{3}^{\mu = \varepsilon_3^j}   \, \right), 
 \label{Eq:E6ram}
\end{equation}
 where $\varepsilon_3$ is a primitive third root of unity. This realises $M_{[100000]_{E_6}}^{\rm LG}$ as a 7-dimensional subvariety of the $42$-dimensional Hurwitz space $H_{g_\omega, \mathsf{n}_\omega}$ with $g_\omega=5$ and $\mathsf{n}_\omega = (5,5,2,2,2,2,2)$, and explicit embedding described by \eqref{eq:e6char}.

\subsubsection{$\RR=E_7$}
The Dynkin diagram for the affine $E_7$ root system is given in Figure \ref{fig:dynkin}, in which we see that the canonical label is $\bar k=3$. For this case there is a unique choice of minimal representation, corresponding to the  56-dimensional fundamental representation having highest weight $\omega_6$. Choosing this representation gives character relations which we include in the appendix for $k=1,\dots, 11$. The resulting family of spectral curves has fibres of genus 33, with a degree 3 morphism to $\bbP^1$  inducing a $3:1$ branched cover of the Riemann sphere with ramification profile over $\infty$

\begin{equation}
 \left( \, \overbrace{12, 6, 4}^{\mu = 0}, \, \overbrace{12, 6, 4}^{\mu = \infty}, \, \overbrace{2}^{\mu = \pm 1}   \, \overbrace{4}^{\mu = \pm i} \right). 
 \label{Eq:E7ram}
\end{equation}

Hence, $M_\omega^{\rm LG}$ is an $8$-dimensional submanifold in the 130-dimensional Hurwitz space $H_{g_\omega,\mathsf{n}_\omega}$, with $g_\omega=33$ and $\mathsf{n}_\omega = (11,5,3,11,5,3,1,1,3,3)$. The associated Newton polygon is shown in \cref{Fig:E7Newt}. 

\begin{figure}[!h]
\includegraphics[scale=0.5]{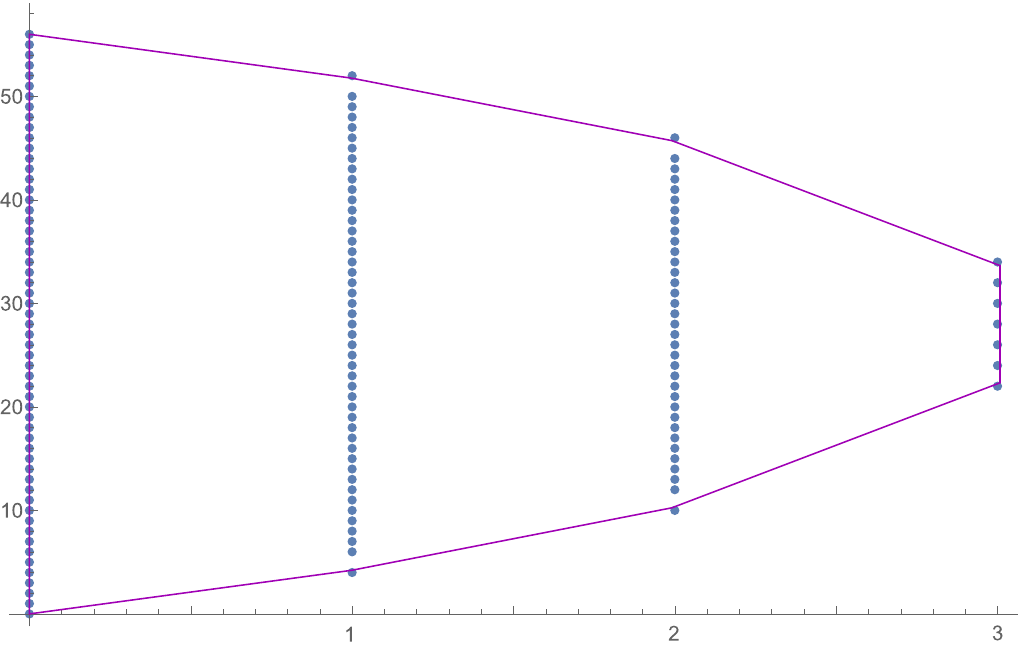}
\caption{Newton polygon for the $E_7$-spectral curve.}
\label{Fig:E7Newt}
\end{figure}

\subsubsection{$\RR=E_8$}

As mentioned, this case was thoroughly treated in \cite{Brini:2017gfi}, \cite{Brini:2019agj}, and we will provide only a brief presentation here. The Dynkin diagram for the affine $E_8$ root system is given in Figure \ref{fig:dynkin}, and the canonical label is, as for all the $E$-types, $\bar k=3$. In this case, the minimal, nontrivial, irreducible representation is the 248-dimensional adjoint representation. Explicit character relations were given in \cite{Brini:2019agj}, where their derivation is explained in detail.  It is shown in \cite{Brini:2017gfi} that the resulting curve is of genus 128 and induces a cover of the Riemann sphere with ramification over $\infty$ at $\mu = 0, \infty$, in addition to second, third and fifth roots of unity, with ramification profile given in \cite[Eq.~(5.34)]{Brini:2017gfi}.  
Explicit flat coordinates and prepotential can also be found in \cite{Brini:2017gfi}. The resulting parent Hurwitz space is of dimension 518.

\subsubsection{$\RR={F_4}$}

The Dynkin diagram for the affine root system of type $F_4$ is shown in \cref{fig:dynkin}. In this case the canonical node is the one corresponding to the fundamental weight $\omega_2$, $\omega=[0001]_{F_4}$, and $\rho_\omega$ will be the 26-dimensional fundamental representation, i.e. the irreducible representation of highest weight $\omega_4$. The character relations \eqref{eq:chardec} are then given as follows:

\bea
\mathfrak{p}^{[0001]_{F_4}}_1 &=& \chi_4, \nn \\
\mathfrak{p}^{[0001]_{F_4}}_2 &=& \chi_1+\chi_3, \nn \\
\mathfrak{p}^{[0001]_{F_4}}_3 &=& \chi_2+\chi_1 \chi_4-\chi_4, \nn \\
\mathfrak{p}^{[0001]_{F_4}}_4 &=& \chi_1^2+\chi_3 \chi_1-\chi_4^2-\chi_2, \nn \\
\mathfrak{p}^{[0001]_{F_4}}_5 &=& \chi_1^2 \chi_4-\chi_4^3 -\chi_1 \chi_4-2 \chi_2 \chi_4+\chi_3 \chi_4+\chi_4+\chi_3^2-\chi_2+\chi_3, \nn \\
\mathfrak{p}^{[0001]_{F_4}}_6 &= & \chi_1^3-\chi_1^2-\chi_4^2 \chi_1-3 \chi_2 \chi_1+\chi_3 \chi_4 \chi_1+\chi_4 \chi_1-\chi_1-\chi_4^3+\chi_4^2-\chi_2 \nn \\ &-& \chi_2 \chi_4    +\chi_3 \chi_4+\chi_4,  \nn \\
\mathfrak{p}^{[0001]_{F_4}}_7 &= & \chi_1 \chi_4^3-\chi_4^3-\chi_1 \chi_4^2+\chi_2 \chi_4^2-2 \chi_1 \chi_4-2 \chi_2 \chi_4-3 \chi_1 \chi_3 \chi_4+2 \chi_3 \chi_4 \nn \\ & + &\chi_4 +2 \chi_4^2 +\chi_1^2-\chi_1+\chi_1 \chi_2-\chi_2+\chi_1^2 \chi_3-\chi_1 \chi_3-2 \chi_2 \chi_3,  \nn \\
\mathfrak{p}^{[0001]_{F_4}}_8 &= & \chi_4^2 \chi_1^2-\chi_1^3-2 \chi_3 \chi_1^2+\chi_2 \chi_1-\chi_3 \chi_1+\chi_2 \chi_4 \chi_1-\chi_3 \chi_4 \chi_1-\chi_4 \chi_1\nn + \chi_4^3\\ & + &  \chi_3\chi_4^3  -2 \chi_3^2-\chi_4^2-\chi_2+3 \chi_2 \chi_3+\chi_3-3 \chi_3^2 \chi_4  + 2 \chi_2 \chi_4   -2 \chi_3 \chi_4 +\chi_4,  \nn \\
\mathfrak{p}^{[0001]_{F_4}}_9 &= & \chi_4^5-\chi_1 \chi_4^3-4 \chi_3 \chi_4^3-2 \chi_4^3+2 \chi_1 \chi_4^2+4 \chi_2 \chi_4^2+\chi_1 \chi_3 \chi_4^2+2 \chi_3^2 \chi_4-\chi_2  \nn \\ & + & \chi_1 \chi_2 \chi_4+\chi_2^2+2 \chi_1 \chi_3 \chi_4+3 \chi_3 \chi_4-2 \chi_1^2+\chi_2\chi_4-2 \chi_1 \chi_3^2 - 2 \chi_1 \chi_2 \nn \\ &+& 2 \chi_1 \chi_4-2 \chi_1^2 \chi_3-2 \chi_1 \chi_3-\chi_2 \chi_3,   \nn \\
\mathfrak{p}^{[0001]_{F_4}}_{10} &= &  \chi_1 \chi_4^4-5 \chi_3 \chi_4^3-2 \chi_4^3-2 \chi_1^2 \chi_4^2+3 \chi_2 \chi_4^2-3 \chi_1 \chi_3 \chi_4^2-\chi_3 \chi_4^2+5 \chi_3^2 \chi_4 \nn \\ & + & \chi_4^5-\chi_1 \chi_4^3+3
\chi_1 \chi_4+\chi_1 \chi_2 \chi_4+\chi_2 \chi_4+3 \chi_1 \chi_3 \chi_4+\chi_2 \chi_3 \chi_4+4 \chi_3 \chi_4 \nn \\ & -& \chi_4+\chi_1^3-\chi_2^2+\chi_1 \chi_3^2+3 \chi_3^2-\chi_1 \chi_2+\chi_2+2 \chi_1^2 \chi_3+ 3
\chi_1 \chi_3-3 \chi_2 \chi_3,  \nn \\
\mathfrak{p}^{[0001]_{F_4}}_{11} &= & \chi_1 \chi_4^4+\chi_2 \chi_4^3-\chi_3 \chi_4^3+\chi_4^3-\chi_1^2 \chi_4^2-2 \chi_1 \chi_4^2-2 \chi_2 \chi_4^2-5 \chi_1 \chi_3 \chi_4^2 \nn \\ &-& 2 \chi_4^2+ 2 \chi_3^2 \chi_4-\chi_1
\chi_4 \chi_2 \chi_4+\chi_1 \chi_3 \chi_4-3 \chi_2 \chi_3 \chi_4+\chi_3^3+3 \chi_1^2 \nn \\ & + &\chi_2^2 +4 \chi_1 \chi_3^2+2 \chi_3^2+\chi_1+3 \chi_1 \chi_2+3 \chi_1^2 \chi_3+5 \chi_1 \chi_3 + 2 \chi_2 \chi_3+\chi_3,  \nn \\
\mathfrak{p}^{[0001]_{F_4}}_{12} &= & \chi_4^4-\chi_4^5-\chi_1 \chi_4^4+3 \chi_1 \chi_4^3+2 \chi_3 \chi_4^3+2 \chi_1^2 \chi_4^2+\chi_3^2 \chi_4^2-\chi_1 \chi_4^2-\chi_2 \chi_4^2 \nn \\ & + & \chi_3
\chi_4^2-5 \chi_1 \chi_4-4 \chi_2 \chi_4-5 \chi_1 \chi_3 \chi_4-3 \chi_2 \chi_3 \chi_4 +\chi_3 \chi_4+\chi_4-\chi_1^3 \nn \\ & + &2 \chi_2^2-\chi_1 \chi_3^2+3 \chi_1 \chi_2-3 \chi_1 \chi_3-2 \chi_2 \chi_3+\chi_1 \chi_3 \chi_4^2 ,  \nn \\
\mathfrak{p}^{[0001]_{F_4}}_{13} &= & 2 \chi_4^4 -2 \chi_4^5-2 \chi_1 \chi_4^4+4 \chi_1 \chi_4^3-2 \chi_2 \chi_4^3+6 \chi_3 \chi_4^3+4 \chi_1^2 \chi_4^2+2 \chi_3^2 \chi_4^2 \nn \\  &-& 2 \chi_1 \chi_4^2-2
\chi_2 \chi_4^2+4 \chi_1 \chi_3 \chi_4^2-2 \chi_4^2-2 \chi_1^2 \chi_4-4 \chi_3^2 \chi_4-2 \chi_1 \chi_4\nn \\ & + &2 \chi_1 \chi_2 \chi_4-4 \chi_1 \chi_3 \chi_4+2 \chi_2 \chi_3 \chi_4-4 \chi_3 \chi_4-2
\chi_3^3-4 \chi_1^2 +2 \chi_4^3 \nn \\ & -& 2 \chi_2^2-4 \chi_1 \chi_3^2-4 \chi_3^2 -4 \chi_1 \chi_2-4 \chi_1^2 \chi_3-4 \chi_1 \chi_3-2 \chi_2 \chi_3+ 2, 
\label{eq:f4char}
\eea
with $\mathfrak{p}^{[0001]_{F_4}}_{26-i} = \mathfrak{p}^{[0001]_{F_4}}_i$. Note that the above relations in the character ring follow from those for $\RR=E_6$ and $\rho=(\mathbf{27})$ or $\rho=(\overline{\mathbf{27}})$ by folding; in particular $M_{F_4,[0001]_{F_4}}^{\rm LG}$ sits inside $M_{[100000]_{E_6}}^{\rm LG}$ as the fixed locus of the involution $w_1 \leftrightarrow w_5$, $w_2 \leftrightarrow w_4$. The generic fibre $\overline{C_w^{\omega_4}}$ is a genus 4 hyperelliptic curve with ramification over $\infty$
\begin{equation}
 \left( \, \overbrace{3,6}^{\mu = 0}, \, \overbrace{3,6}^{\mu = \infty}, \, \overbrace{3}^{\mu = \varepsilon_3}, \, \overbrace{3}^{\mu = \varepsilon_3^2}   \, \right), 
 \label{Eq:F4ram}
\end{equation}
 where $\varepsilon_3$ is a primitive third root of unity, and the associated Newton polygon is shown in \cref{Fig:F4Newt}. The extended affine $F_4$-Frobenius manifold is thus realised as a  5-dimensional submanifold of the 36-dimensional Hurwitz space $H_{g_\omega, \mathsf{n}_\omega}$, with $g_\omega=4$ and $\mathsf{n}_\omega = (5,5,2,2,2,2)$.

\begin{figure}[t]
\includegraphics[scale=0.5]{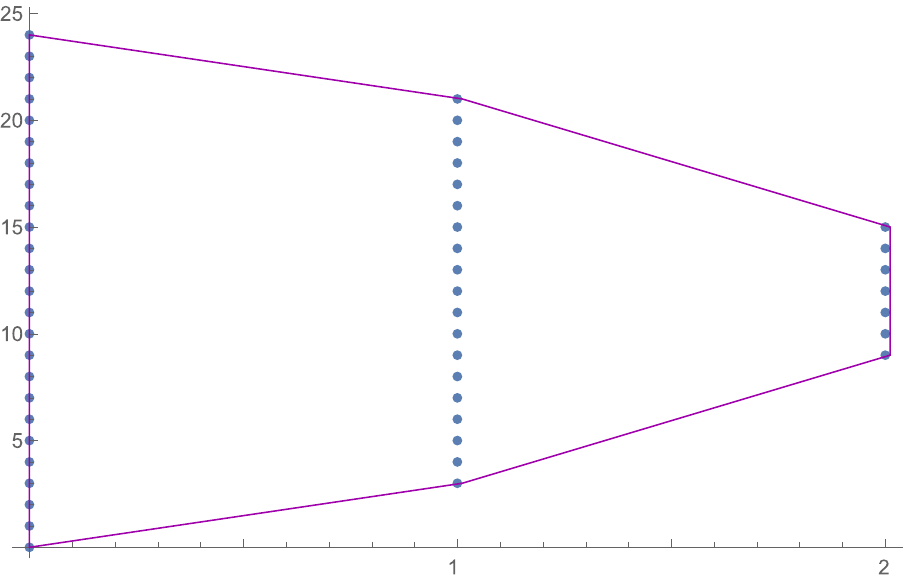}
\caption{Newton polygon for the $F_4$-spectral curve.}
\label{Fig:F4Newt}
\end{figure}

\subsubsection{$\RR={G_2}$}
\label{sec:G2LG}

The Dynkin diagram for the affine $G_2$ root system is given in Figure \ref{fig:dynkin}, and the canonical label is $\bar k=2$.  In this case,  $\rho_{\omega_1}=(\mathbf{7})$ is the 7-dimensional fundamental representation which is the irreducible representation with highest weight $\omega_1$. We obtain the character relations
\begin{equation}
     \mathfrak{p}^{[10]_{G_2}}_1 = \chi_1, \quad  \mathfrak{p}^{[10]_{G_2}}_2 = \chi_1 +  \chi_2, \quad \mathfrak{p}^{[10]_{G_2}}_3 = \chi_1^2 - \chi_2,
\label{eq:g2char}
\end{equation}
and $\mathfrak{p}^{[10]_{G_2}}_{7-i} =  \mathfrak{p}^{[10]_{G_2}}_i $, hence
\begin{equation*}
    \mathcal{P}_{[10]_{G_2}}(\lambda, \mu; w_0, w_1, w_2)  \equiv \sum_{i=0}^{3} (-1)^{i} \mathfrak{p}^{[10]_{G_2}}_i\left(w_1, w_2 - \frac{\lambda}{w_0}\right) \mu^i(1-\mu^{7-2i}),
    \label{Eq:G2curve}
\end{equation*}
and solving for $\lambda$
\bea
    \lambda &=&   \dfrac{w_0}{\mu^2 (\mu + 1)^2}\bigg(\mu^6 + (1-w_1)\mu^5 + (1+w_2)\mu^4 + (1-w_1^2+2w_2)\mu^3 \nn \\ &+& (1+w_2)\mu^2 + (1-w_1)\mu + 1 \bigg).
\label{eq:g2spot}
\eea

As was the case for $\RR=F_4$, the same superpotential could be obtained from the LG model of $\omega=[1000]_{D_4}$ by the order three folding of the $D_4$ Dynkin diagram, and $M_{[10]_{G_2}}^{\rm LG}$ sits inside $M_{[1000]_{D_4}}^{\rm LG}$ as the fixed locus of the triality action sending $(w_1, w_3,w_4) \to (w_{\epsilon(1)},w_{\epsilon(3)},w_{\epsilon(4)})$ with $\epsilon \in S_3$. The outcome is a family of rational functions in $\mu$, with three poles at $\mu = 0,  -1, \infty$, all of order two. This means that the resulting Frobenius manifold is a 3-dimensional sublocus in the 7-dimensional Hurwitz space $H_{0,\mathsf{n}_\omega }$, with $\mathsf{n}_\omega = (1,1,1)$.

\section{Mirror symmetry}
\label{Section:MS}

Having constructed $M_\omega^{\rm LG}$ for all Dynkin types, we now move on to proving \cref{thm:mirror}. 

As in \cite[Lecture~5]{Dubrovin:1994hc}, and with notation as in \cref{sec:hurgen}, define local coordinates $\kappa_i$ in a neighbourhood of $\infty_i$  by Lagrange inversion of $\lambda(\kappa)=\kappa_i^{n_i+1} +\cO(1)$. Consider then the following functions on $H_{g_\omega,\mathsf{n}_\omega}$:
\begin{subequations}
\begin{equation}	
    \tau_{i;\alpha} \coloneqq \,  \underset{\infty_i}{\mathrm{Res}} \,  \kappa_i^{-\alpha} \log\mu \, \rd\lambda, \quad \alpha = 1, \cdots, n_i,
    \label{Eq:5.1a}
\end{equation} 
\begin{equation}
    \tau_j^{\rm ext} \coloneqq \mathrm{p.v. } \,  \int_{\infty_0}^{\infty_j} \rd \log\mu, \quad j=1, \cdots, \ell(\mathsf{n}_\omega),
    \label{Eq:5.1b}
\end{equation}
\begin{equation}
    \tau_k^{\rm res} \coloneqq  \underset{\infty_k}{\mathrm{Res}} \, \lambda \, \rd\log\mu, \quad k = 0, \cdots, \ell(\mathsf{n}_\omega),
    \label{Eq:5.1c}
\end{equation}
\end{subequations}
where the principal value, {\rm p.v.}, indicates subtraction of the divergent part in $\kappa_i$.

\begin{lem}
There exist complex numbers $f_{i,\alpha}^{(r)}$, $q_j$, $r_k \in \bbC$  and holomorphic functions $\{t_i: M^{\rm LG}_\omega \to \bbC\}_{i=1,\dots, l_{\RR}+1}$ such that 
\bea 
\tau_j^{\rm ext}\big|_{M_\omega^{\rm LG}}
  &=&  q_j t_{l_\RR+1}   , \nn \\
    \tau_{i;\alpha}|_{M_\omega^{\rm LG}} &=& \sum_{r=1}^{l_\RR} f_{i,\alpha}^{(r)} t_r,  \nn \\
    \tau_k^{\rm res}|_{M_\omega^{\rm LG}} &=& r_k t_{\bar k}.
    \label{eq:taut}
\eea     
Moreover, $(t_1, \dots, t_{l_\RR+1})$ are flat coordinates for the metric \eqref{eq:reseta} on $\cM_\omega^{\rm LG}$.
\label{Thm:5.1}
\end{lem}
\begin{proof}
Suppose first that $\RR$ is any of the root systems $A_l$, $B_l$, $C_l$, $D_l$ or $G_2$. Then, from the discussion of \cref{sec:DSZZ,sec:G2LG}, we have $g_\omega=0$, and $M_\omega^{\rm LG} \subset H_{0,\mathsf{n}_\omega}$ is a space of rational functions. The statement of the Lemma is then a specialisation of \cite[Thm~5.1]{Dubrovin:1994hc}, which in particular asserts that $\tau_{i,\alpha}$, $\tau_j^{\rm ext}$ and $\tau_k^{\rm res}$ are a complete set of flat coordinates for the metric \eqref{eq:etares} on a genus zero Hurwitz space; and since $M_\omega^{\rm LG}$ is a fixed-locus of an involution acting linearly in these coordinates, it is specified by a linear condition of the form \eqref{eq:taut}. For the remaining four exceptional cases, the linear relations in \eqref{eq:taut} and the constancy of the Gram matrix of the metric \eqref{eq:todaeta} in the chart $(t_1, \dots, t_{l_\RR+1})$ follow from a direct residue calculation from \eqref{Eq:5.1a}--\eqref{Eq:5.1c}, and from making use of the explicit form of the superpotential from \eqref{eq:shiftfun} for each Dynkin type; we omit the details here\footnote{All calculations can be made available to the interested reader upon request.}. 
\end{proof}

\begin{lem}
Let $(t_1, \dots, t_{l_\RR+1})$ be flat coordinates for \eqref{eq:todaeta} as in \cref{Thm:5.1}. Then $t_{l_\RR+1} = \log w_0 /d_{\bar k}$ and, for all $i=1, \dots, l_{\RR}$,
\beq 
t_i(w_0, \dots, w_{l_\RR}) \in w_0^{d_i/d_{\bar k}} \bbZ[w_1, \dots, w_{l_\RR}].
\eeq 
Moreover, the change of variables $w \mapsto t(w)$ has a polynomial inverse 
\beq 
w_i(t_1, \dots, t_{l_\RR+1})\in \bbQ[t_1, \dots, t_{l_\RR+1},\re^{t_{l_\RR+1}}].
\eeq 
\label{lem:flatpol}
\end{lem}

\begin{proof}
A direct calculation from \eqref{Eq:5.1a}--\eqref{Eq:5.1c}, using as above the explicit form of \eqref{eq:shiftfun}, shows that the flat coordinates $(t_1, \dots, t_{l_\RR+1})$ are related (up to normalisation) to $(w_0, \dots, w_{l_\RR})$ as
\bea
t_i &=& w_0^{d_i/d_{\bar k}} \mathfrak{t}_i(w), \nn \\
t_{l_\RR+1} &=& \frac{\log w_0}{d_{\bar k}},
\eea  
where $\mathfrak{t}_i(w)$ are explicit integral polynomials in $(w_1, \dots, w_{l_\RR})$. Moreover, it can be verified directly that 
\bea
\de_{w_j}\mathfrak{t}_i = & 0 &\hbox{ if } d_j>d_i, \nn \\
\deg_{w_j}\mathfrak{t}_i = & 1 & \hbox{ if } d_j=d_i,
\eea 
implying that the inverse function $t \mapsto w(t)$ is a polynomial in $(t_1, \dots, t_{l_\RR}, \re^{t_{l_\RR+1}})$ with rational coefficients.
\end{proof}

The previous two Lemmas put us in a position to prove \cref{thm:mirror}.

\begin{proof}[Proof of \cref{thm:mirror}]
Consider the change-of-variables 
%
\beq
w_i(x_1, \dots, x_{l_\RR}) =  \chi_i(\re^{\mathsf{h}}) = \sum_{\omega' \in \Gamma(\rho_i)} \prod_{j=1}^{l_\RR}\re^{\omega'_j x_j}\,, \quad i=1,\dots l_\RR.
\label{eq:witox}
\eeq
Further identifying 
\beq 
w_0 = \re^{c_\omega x_{l_\RR+1}},
\label{eq:w0tox}
\eeq 
for $c_\omega \in \bbC^\star$, extends this to a local analytic isomorphism sending $$\cM_{\RR}^{\rm DZ} \ni (x_1, \dots, x_{l_\RR}; x_{l_\RR+1}) \mapsto (w_0; w_1, \dots, w_{l_\RR}) \in M_{\omega}^{\rm LG}.$$ We shall now prove that this is, in fact, an isomorphism of Frobenius structures upon checking the four defining properties, {\bf DZ-I} through {\bf DZ-IV}, of the reconstruction \cref{Thm:DZ}. \\

\begin{description}
\item[DZ-I and DZ-II] define holomorphic vector fields $e$, $E\in \Gamma(M_\omega^{\rm LG},TM_\omega^{\rm LG})$ as
\beq 
e \coloneqq \dfrac{\partial_{w_{\bar k}}}{w_0}, \quad E \coloneqq w_0\partial_{w_0}.
\label{eq:eEtoda}
\eeq 
Then, the one-parameter group of isomorphisms generated by the horizontal lift $\delta^{(\mu)}_e$ (resp. $\delta^{(\mu)}_E$) to the universal curve acts on the superpotential by translation (resp. conformal transformations) on the superpotential. To see this, note that, by \eqref{eq:shiftfun},
\bea 
w_0 \mapsto a w_0 & \leadsto & \lambda \mapsto a \lambda \nn, \\
w_{\bar k} \mapsto w_{\bar k} + b/w_0 & \leadsto & \lambda \mapsto \lambda+b. 
\label{eq:eEact}
\eea 
%
Since the unit and Euler vector field \eqref{eq:eEhur} of a Hurwitz--Frobenius manifold are characterised as the generators of the affine action \eqref{Eq:Hurwitzaffine} on the superpotential, \eqref{eq:eEact} identifies $e$ with the unit and $E$ with the Euler vector field of $\cM_\omega^{\rm LG}$. To verify {\bf [DZ-I]} and {\bf [DZ-II]}, it then remains to check that the expressions in \eqref{eq:eEtoda}  coincide with those for the respective vector fields of $\cM_\RR^{\rm DZ}$. A simple calculation from \cref{lem:flatpol} shows that
 \begin{gather}
      e =  \dfrac{1}{w_0}\sum_{i=1}^{l_{\mathcal{R}+1}}\dfrac{\partial t_i}{\partial w_{\bar k}} \partial_{t_i} = \partial_{t_{\bar k}},\label{Eq:LGe} \\  E = w_0\partial_{w_0} = w_0\sum_{i=1}^{l_{\mathcal{R}+1}}\dfrac{\partial t_i}{\partial w_0} \partial_{t_i} = \dfrac{\partial_{t_{l_\RR+1}}}{d_{\bar k}} +\sum_{i=1}^{l_{\mathcal{R}}} \dfrac{d_i t_i}{d_{\bar k}} \partial_{t_i},
     \label{Eq:LGE}
 \end{gather}
thereby matching the expression of the unit and the Euler vector fields in \cref{Thm:DZ}.
\item[DZ-III] 
Let us consider now the Gram matrix of the intersection pairing on $\cM^{\rm LG}_\omega$ in the $x$-chart. Consider first the argument of the residues in  \eqref{eq:todag},
\beq 
\Upsilon_{ij}(p) \coloneqq
\frac{\delta^{(\mu)}_{\de_{x_i}} \lambda ~ \delta^{(\mu)}_{\de_{x_j}} \lambda}{\lambda \mu^2  \de_\mu \lambda}\rd\mu(p),
\label{eq:ups}
\eeq
so that
\beq 
\gamma(\de_{x_i}, \de_{x_j}) = \sum_{l}\underset{p=p^{\rm cr}_l}{\Res}\Upsilon_{ij}(p).
\label{eq:gammaresups}
\eeq 
From \eqref{eq:ups}, we deduce that the pole structure of $\Upsilon_{ij}(p)$ is as follows:
\ben[(i)]
\item it has simple poles at the critical points $\{p_l^{\rm cr}\}$, for which $\rd\lambda(p_i^{\rm cr})=0$;
\item it has at most simple poles at $\lambda(p)=0$, and only when both $i,j \neq l_{\RR}+1$: indeed, from \eqref{eq:w0tox} we have that $\delta^{(\mu)}_{\de_{x_{l_\RR+1}}} \lambda = c_\omega \lambda$, thereby cancelling the zeroes at the denominator when either $i$ or $j=l_\RR+1$;
\item it has at most simple poles at $\mu(p)=0$ (for which $\lambda(p)=\infty$: see \cref{rmk:muadm}) and only when $i=j=l_{\RR}+1$. To see why, notice that locally at a point $p'$ near $\mu(p)=0$ we have
\beq
\lambda(p') = \re^{c_\omega x_{l_\RR+1}} \mu(p')^{-q_p} (r_p+ \cO(\mu)),
\eeq 
where $q(p) \in \bbZ_{>0}$ and $r(p) \in \bbC$. Then, the denominator in \eqref{eq:ups} has a leading Puiseux asymptotics in $\mu$ of the form  $\mu^{1-2 q_p}$, resulting from the combination of the order $q_p$ divergence of $\lambda(\mu)$, the order $q_p+1$ divergence of $\de_\mu \lambda(\mu)$, and the double zero of $\mu^2$. For the numerator, we have  $\delta^{(\mu)}_{\de_{x_i}} \lambda \sim \mu^{\delta_{i, l_{\RR}+1}-q_p}$, since $r(p)$ is $w$-independent by \cref{prop:mu}, so that
\beq 
\ord_{\mu(p)=0} \Upsilon_{ij} = 1-\delta_{i,l_\RR+1}-\delta_{j,l_\RR+1};
\eeq 
\item superficially, there may be further poles to be expected at the critical points $\{q^{\rm cr}_m\}$ of the $\mu$-projection, $\rd\mu(q^{\rm cr}_m)=0$, that is, where the Ehresmann connection \eqref{eq:horder} induced by the $\mu$-foliation is singular and $\delta^{(\mu)}_{\de_{x_i}} \lambda$ possibly develops a pole. However these singularities are offset by a vanishing of the same order of $\rd \mu/{\de_\mu \lambda}$, so that 
\beq 
\ord_{q_m^{\rm cr}} \Upsilon_{ij} = 0.
\label{eq:upsqcr}
\eeq 
\\
\een  

Based on the above, we turn the contour around and equate the sum of residues at the critical points in \eqref{eq:todag} to a much more manageable sum of residues at poles and zeros of $\mu$ and $\lambda$. When $i=j=l_{\RR}+1$, we only have poles at $\mu=0$, and
\bea
\gamma(\de_{x_{l_\RR+1}},\de_{x_{l_\RR+1}})  &=& \cN_\omega
\sum_{\mu(p)=0}\underset{p'=p}{\Res}~ \Upsilon_{l_\RR+1,l_\RR+1}(p') \nn \\
&=& \cN_\omega c_\omega^2
\sum_{\mu(p)=0}\underset{p'=p}{\Res}~ \frac{\lambda}{\mu^2 \de_\mu \lambda} \rd \mu(p') \nn \\
&=& \cN_\omega c_\omega^2
\sum_{\mu(p)=0} \frac{1}{\ord_p \lambda} =  -\sum_{i: \mu(\infty_i)=0} \frac{\cN_\omega c_\omega^2}{(\mathsf{n}_\omega)_i}.
\eea
Setting $c_\omega= \sqrt{d_{\bar k}} (- \cN_\omega \sum_{i: \mu(\infty_i)=0} 1/(\mathsf{n}_\omega)_i)^{-1/2}$ gives $\gamma(\de_{x_{l_\RR+1}},\de_{x_{l_\RR+1}})=d_{\bar k}$. \\

Suppose now that $j=l_\RR+1$, $i<l_\RR+1$. Our analysis above shows that $\Upsilon_{ij}$ is regular outside the critical locus of $\lambda$, and therefore
\beq 
\gamma(\de_{x_i},\de_{x_{l_\RR+1}}) = \gamma(\de_{x_{l_\RR+1}},\de_{x_i})= \delta_{i,l_\RR+1} d_{\bar k}.
\eeq 

Finally, let's look at $i,j < l_{\RR}+1$. In this case, outside $\{p_l^{\rm cr}\}$, $\Upsilon_{ij}$ has only simple poles at the zeroes of $\lambda$, i.e. when
\bea
 0= & \cP_{\omega}(0,\mu) & = \cQ^{\rm red}_{\omega}(\mu)= \prod_{0 \neq \omega'\in\Gamma(\rho_\omega)}(\mu-\re^{\omega'(x)}) \nn \\ & \Longleftrightarrow & \mu = \re^{\omega'(x)}\,, \quad \omega' \in \Gamma(\rho_\omega),
\label{eq:xil0}
\eea 
where we used the shorthand notation $\omega'(x) \coloneqq \sum_{n=1}^{l_\RR} \omega'_n x_n$. The evaluation of the residue at $(\lambda,\mu)=(0,\re^{\omega'(x)})$ gives
\bea
\underset{\mu=\re^{\omega'(x)}}
\Res \Upsilon_{ij}
&=& 
\frac{\delta^{(\mu)}_{\de_{x_i}} \lambda~ \delta^{(\mu)}_{\de_{x_i}} \lambda}{\mu^2 (\de_\mu\lambda)^2}\Bigg|_{\mu=\re^{\omega'(x)}}
=
\frac{\de_{x_i} \cQ_{\omega}^{\rm red}\de_{x_j} \cQ_{\omega}^{\rm red}}{\mu^2 (\de_\mu\cQ_{\omega}^{\rm red})^2}\Bigg|_{\mu=\re^{\omega'(x)}} \nn \\
&=& \frac{\re^{2\omega'(x)} \omega'_i \omega'_j \prod_{\omega'',\omega''' \neq \omega'}\l(\re^{\omega'(x)}-\re^{\omega''(x)}\r)
\l(\re^{\omega'(x)}-\re^{\omega'''(x)}\r)
}{\re^{2\omega'(x)}
  \prod_{\omega'',\omega''' \neq \omega'}\l(\re^{\omega'(x)}-\re^{\omega''(x)}\r)
\l(\re^{\omega'(x)}-\re^{\omega'''(x)}\r)} \nn \\
&=& \omega'_i \omega'_j,
\label{eq:reseta}
\eea
where the second equality makes use of the implicit function
theorem. Summing over $\omega'$ gives
\bea
\gamma(\de_{x_i},\de_{x_{j}}) &=& \cN_\omega \sum_{\omega' \in \Gamma(\rho_\omega)} \omega_i' \omega_j' = \cN_\omega \Tr (\rho_\omega(h_i) \rho_\omega(h_j)) \nn \\
&=& \cN_\omega C_2(\rho_\omega)  (\mathcal{K}_{\RR})_{ij} \nn \\
&=& \cN_\omega \frac{(\omega, \omega+2 \mathsf{w}) \dim_\bbC \rho_\omega}{\dim_\bbC \mathfrak{g}}(\mathcal{K}_{\RR})_{ij}.
\label{eq:etacart}
\eea
In \eqref{eq:etacart} the first equality is immediate; the second reflects the fact that the resulting trace gives a non-degenerate Ad-invariant bilinear form on $\mathfrak{h}_\RR^\star$, hence proportional to the Killing pairing since $\mathfrak{g}$ is simple; and the final step identifies the proportionality factor with the corresponding quadratic Casimir eigenvalue for the representation $\rho_\omega$ in the appropriate normalisation (see e.g. \cite[Lect.~25.1]{MR1153249}). Picking $\cN_\omega\coloneqq -\frac{\dim_\bbC \mathfrak{g}}{(\omega, \omega+2 \mathsf{w}) \dim_\bbC \rho_\omega}$ concludes the identification of \eqref{eq:todag} with the intersection pairing in \cref{Thm:DZ}.
\item[DZ-IV]
The final missing piece to invoke the reconstruction theorem, \cref{Thm:DZ}, is to prove the quasi-polynomiality of the prepotential of $\cM_\omega^{\rm LG}$. Consider the change-of-variables\footnote{This might look like an unnecessary piece of trivial additional notation at this stage: it just amounts to replacing $w_0$-derivatives of the superpotential  with logarithmic $w_0$-derivatives. The reader will forgive us as this will help making the discussion of polynomiality completely manifest at the end of the argument: indeed, while components $c(\de_{w_i},\de_{w_j},\de_{w_k})$ of the $c$-tensor in the $w$-chart are Laurent monomials (with coefficients in $\bbQ[w_1, \dots, w_{l_\RR}]$) in $w_0$ 
with exponent ${2-\delta_{i0}-\delta_{j0}-\delta_{k0}}$, and in particular have negative power for $i=j=k=0$, the components $c(\de_{v_i},\de_{v_j},\de_{v_k})$ of the $c$-tensor in the $v$-chart will always be elements of $\re^{2 v_0} \bbQ[v_1, \dots, v_{l_\RR}]$ (see \eqref{eq:c0w}).}
\beq
v_i \coloneqq  \begin{cases}
                                  \log w_0 & \text{if } i=0,\\
                                   w_i & \text{otherwise,}
  \end{cases}
  \label{Eq:wtildeDef}
\eeq
and define
\beq 
\Xi_{ijk}(p) \coloneqq -\cN_\omega \frac{\delta^{(\mu)}_{\de_{v_i}}\lambda~\delta^{(\mu)}_{\de_{v_j}}\lambda~\delta^{(\mu)}_{\de_{v_k}}\lambda}{\mu^2 \de_\mu \lambda} \rd\mu(p).
\eeq 
Then the symmetric (0,3) tensor \eqref{eq:todac} in the $v$-chart reads
\beq 
c(\de_{v_i}, \de_{v_j}, \de_{v_k}) = \sum_{l}\underset{p=p^{\rm cr}_l}{\Res}\Xi_{ijk}(p).
\label{eq:etaresxi}
\eeq 
The same analysis as the one we carried out to verify [{\bf DZ-III]} reveals that $\Xi_{ijk}$ has poles at the critical points (i) and the poles (iii) of $\lambda$. Additionally, it has poles at the critical points (iv)~
of the $\mu$-projection,
due to the singularities of the derivation $\delta^{(\mu)}_{X}$ in \eqref{eq:horder}. As noted in \eqref{eq:upsqcr}, in the case of the intersection pairing, the poles of $\delta^{(\mu)}_{\de_{x_i}}\lambda$ and $\de_\mu \lambda$ in \eqref{eq:ups} were cancelling out between the numerator and denominator in the Puiseux expansion of $\Upsilon_{ij}(p)$ near $q^{\rm cr}_m$. On the other hand, the additional factor containing $\delta^{(\mu)}_{\de_{w_k}}\lambda$ in the numerator of \eqref{eq:todac} may give rise to a pole with non-vanishing residue for $\Xi_{ijk}(p)$ at $q_m^{\rm cr}$. \\

To compute \eqref{eq:etaresxi} we determine individually the residues of $\Xi_{ijk}(p)$ using the known expression of $\cP_\omega(\lambda,\mu)$ from \eqref{eq:shiftfun}.  By \cref{prop:mu}, the $\mu$-coordinate of the poles of $\lambda$ have simple $v$-independent expressions, which are either $\mu=0$, $\infty$, or a root of unity (see \eqref{eq:mupoles}). The calculation of the corresponding residues is straightforward, and we find
\beq 
\underset{p=\infty_r}{\Res}\Xi_{ijk}(p) \in \re^{2v_0} \bbQ[v_1, \dots, v_{l_\RR}]\,, \quad r=1, \dots, \ell(\mathsf{n}_\omega).
\label{eq:xirespol}
\eeq 
%
On the other hand, the $\mu$-coordinates of the critical points $\{p_l^{\rm cr}\}$ of $\lambda$ (resp. $\{q_m^{\rm cr}\}$ of $\mu$) are given by the roots of  $\mathrm{Discr}_\mu(\cP_\omega)(\mu)$ (resp. $\mathrm{Discr}_\lambda(\cP_\omega)(\mu)$). When $\deg_\lambda \cP_\omega>1$, these are both high degree polynomials in $\mu$ with $v$-dependent coefficients, and therefore $\mu(p_l^{\rm cr})$ and $\mu(q_m^{\rm cr})$ are given by complicated (algebraic) hypergeometric functions of $(\re^{v_0}, v_1, \dots, v_{l_\RR+1})$. Since $\Xi_{ijk}$ has in general non-vanishing residues at $p=q_m^{\rm cr}$, turning the contour around in the sum in \eqref{eq:etaresxi} will pick up some intricate hypergeometric contributions from these points, 
making it difficult to provide a manifest proof of the polynomiality of their sum. One exception however is when $i=0$: since $\delta^{(\mu)}_{\de_{v_0}} \lambda = \lambda$, the same count of the order of divergence as in \eqref{eq:upsqcr} shows that
\beq 
\ord_{q_m^{\rm cr}} \Xi_{0jk} = 0 ~\Longrightarrow~ \underset{p=q_m^{\rm cr}}{\Res} \Xi_{0jk}=0.
\eeq 
In this case, the residue theorem implies that, closing the contour around the complement of $\{p_l^{\rm cr}\}$, \eqref{eq:etaresxi} equates to a sum of residues coming only from the poles of $\lambda$, 
\beq 
c(\de_{v_0}, \de_{v_j}, \de_{v_k}) = - \sum_{r}\underset{p=\infty_r}{\Res}\Xi_{0jk}(p) \in \re^{2v_0} \bbQ[v_1, \dots, v_{l_\RR}],
\label{eq:c0w}
\eeq 
where we used \eqref{eq:xirespol}. In the same vein, we also obtain
\beq 
\eta(\de_{v_j}, \de_{v_k}) = - \sum_{r}\underset{p=\infty_r}{\Res}\frac{1}{\lambda}\Xi_{0jk}(p) \in \re^{v_0}\bbQ[v_1, \dots, v_{l_\RR}].
\label{eq:etaw}
\eeq 
To compute the remaining components $\widetilde{c}_{ijk}(v)\coloneqq c(\de_{v_i}, \de_{v_j}, \de_{v_k})$, we use the associativity of the Frobenius product in the $v$-chart: 
 \begin{equation}
    \sum_{k,l} \bigg(\widetilde{c}_{ijk}\widetilde{\eta}^{kl}\widetilde{c}_{0lm} - \widetilde{c}_{imk}\widetilde{\eta}^{kl}\widetilde{c}_{0lj}\bigg)=0,
    \label{Eq:WDVVu}
\end{equation}
where $\widetilde{\eta}^{kl}\in \re^{-v_0} \bbQ(w_1, \dots, w_{l_\RR})$ is the inverse of the Gram matrix of \eqref{eq:etaw}. The set of equations \eqref{Eq:WDVVu} give an a priori overconstrained inhomogeneous linear system for the unknowns $\widetilde{c}_{ijk}(v)$ with all $i,j,k\neq 0$. It is not at all obvious, in principle, that a unique solution of \eqref{Eq:WDVVu} exists; and even so, that such a solution is a polynomial (as opposed to rational) function of $(\re^{v_0},v_1, \dots, v_{l_\RR})$. That this is the case, however, can be shown from a direct calculation from \eqref{eq:c0w}--\eqref{eq:etaw}, which gives
\beq 
c(\de_{v_i}, \de_{v_j}, \de_{v_k}) \in \re^{2 v_0} \bbQ[v_1, \dots, v_{l_\RR}].
\eeq 
Using \cref{lem:flatpol} together with \eqref{Eq:wtildeDef}, we finally deduce that
\beq 
c(\de_{t_i}, \de_{t_j}, \de_{t_k}) = \frac{\de F_\RR}{\de t_i \de t_j \de t_k}\in \bbQ[t_1, \dots, t_{l_\RR+1},\re^{t_{l_\RR+1}}].
\label{eq:c0t}
\eeq 
\end{description}
The claim now follows upon invoking \cref{Thm:DZ}.

\end{proof} 

\begin{rmk}
The explicit form of \eqref{eq:shiftfun} was crucially used in the computation of \eqref{eq:c0w}--\eqref{eq:etaw}, as well as in determining the flat coordinates in \cref{lem:flatpol}. Despite the disadvantage of having to carry out a separate analysis of each of the seven Dynkin series, an immediate spin-off of this method is that closed-form polynomial expressions for the flat coordinates and, from \eqref{eq:c0t}, the prepotential are rather powerfully produced from straightforward residue computations. We will illustrate this in detail in the next Section. 
\end{rmk}

\subsection{Examples}
\label{sec:examples}
The construction of closed-form flat frames for $\eta$ and prepotentials in all Dynkin types does not follow directly from the original Dubrovin--Zhang construction of the flat pencil\footnote{A priori one could try to construct a quasi-homogeneous polynomial ansatz for the prepotential, impose the associativity of the Frobenius product, and solve for the coefficients. This typically results in a large system of non-linear equations, owing to the non-linearity of the WDVV equations, whose solution is already unviable e.g. for $\RR=E_6$. The proof of \cref{thm:mirror} shows that the Landau--Ginzburg formulas reduce this to an explicit calculation of residues and a relatively small-rank, {\it linear} problem.}. One of the advantages of the mirror formulation in \cref{thm:mirror} is that these can now be easily computed using the Landau--Ginzburg formalism.

\subsubsection{Classical root systems}

\begin{example}[$\RR=B_3$]

From \eqref{Eq:BlSuppot} we have the superpotential
\bea
 \lambda_{B_3} &=&    \dfrac{w_0}{\mu^2 (\mu+1)^2} \bigg(\mu^6+1 -(\mu^5 + \mu) (w_1-1)+(\mu^4 + \mu^2) (-w_1+w_2+1)\nn \\ &+& \mu^3 (-w_3^2+2 w_2+2)\bigg), 
\eea 

which gives flat coordinates
\begin{equation}
    t_4  = \dfrac{\log w_0}{2}, \quad  t_1 = w_0^{\frac{1}{2}} (w_1 + 1), \quad t_3 =  w_0^{\frac{1}{2}} w_3, \quad t_2=  w_0(w_1 + w_2 + 2).
\end{equation}

We obtain the prepotential for $B_3$ to be
\begin{equation}
    F_{B_3} = t_4 t_2^2 +\dfrac{1}{2} t_1^2 t_2 +t_3^2 t_2 -\dfrac{1}{48}t_1^4 - \dfrac{1}{24}t_3^4 +2 t_1 t_3^2  \re^{t_4}+ t_1^2 e^{2 t_4} +2 t_3^2 \re^{2 t_4}  +\dfrac{1}{2} \re^{4 t_4},
\end{equation}
\end{example}
which is seen to be equivalent to the free energy in \cite[Example 2.7]{MR1606165} by $F \mapsto \frac{F}{2}$.

\begin{example}[$\RR=B_4$]

For $B_4$, (\ref{Eq:BlSuppot}) is
\begin{equation}
\begin{aligned}
   \lambda_{B_4} =  \dfrac{w_0}{\mu^3 (\mu+1)^2} \Big(& \mu^8+ 1 -(\mu^7+ \mu) (w_1-1)+(\mu^6 + \mu^2) (-w_1+w_2+1) \\ & -(\mu^5+\mu^3) (w_1-w_2+w_3-1)  +\mu^4 (w_4^2-2 w_1-2 w_3)\Big), 
   \end{aligned}
\end{equation}

which gives flat coordinates
\begin{subequations}
\begin{equation*}
    t_5 = \dfrac{\text{log}\,w_0}{3}, \quad t_1 =  w_0^{\frac{1}{3}} (w_1 + 1), \quad t_2 =  w_0^{\frac{1}{2}} w_4, \quad t_4 =  w_0^{\frac{2}{3}} ( 4 w_1-w_1^2 + 6 w_2 + 11),
\end{equation*}
\begin{equation*}
    t_3 =    w_0 (2 w_1 + w_2 + w_3 + 2).
\end{equation*}
\end{subequations}

In this case, the resulting prepotential is given by
\begin{equation}
\begin{aligned}
    F_{B_4} = \, & \dfrac{1}{1944}t_1^4 t_4 -\dfrac{1}{9720}t_1^6-\dfrac{1}{648}  t_1^2 t_4^2+ +\dfrac{1}{9}  t_1t_4 t_3  -\dfrac{1}{24}t_2^4 + t_2^2 t_3+\dfrac{1}{1944}t_4^3  \\ & +t_5 t_3^2  + \dfrac{1}{3} t_1^2 t_2^2  \re^{t_5}   + \dfrac{1}{3}t_2^2 t_4 \re^{t_5} +\dfrac{1}{36}t_1^4 \re^{2 t_5} +\dfrac{1}{18} t_1^2 t_4  \re^{2 t_5} + 2t_1t_2^2  \re^{2 t_5} \\ & +\dfrac{1}{36}t_4^2 \re^{2 t_5} +2t_2^2 \re^{3 t_5} +  \dfrac{1}{2} t_1^2 \re^{4 t_5}+\dfrac{1}{3} \re^{6 t_5}.
    \end{aligned}
\end{equation}
\end{example}

%
%
%
%
%


\begin{example}[$\RR=C_3$]

Here, the superpotential is given by
\begin{equation}
    \lambda_{C_3} =  \dfrac{ w_0 (\mu^6 + 1- (\mu^5 + \mu) w_1+ (\mu^4+\mu^2) (w_2+1)-\mu^3 (w_1+w_3)}{ \mu^3}, 
\end{equation}

which leads to the following flat coordinates:
\begin{equation}
   t_4 =  \dfrac{\text{log}\, w_0}{3}, \quad t_1 = w_0^{\frac{1}{3}} w_1, \quad t_2 =  w_0^{\frac{2}{3}} (w_1^2 - 6 (w_2 + 1)) , \quad t_3 =  w_0 (w_1 + w_3).
\end{equation}

In this case, the prepotential is 
\begin{equation}
\begin{aligned}
  F_{C_3} = & \,  t_4t_3^2 - \dfrac{1}{9} t_1 t_2 t_3 -\dfrac{1}{9720}t_1^6 -\dfrac{1}{1944}t_1^4 t_2  - \dfrac{1}{648} t_1^2 t_2^2   - \dfrac{1}{1944}t_2^3 \\ & +\dfrac{1}{36} t_1^4 \re^{2 t_4}- \dfrac{1}{18}t_1^2 t_2 \re^{2 t_4} + \dfrac{1}{36} t_2^2 \re^{2 t_4}  + \dfrac{1}{2} t_1^2 \re^{4 t_4} + \dfrac{1}{3} \re^{6 t_4},
  \end{aligned}
\end{equation}
which is the same as the one found in Example 2.8 in \cite{MR1606165} after letting $F \mapsto \frac{F}{2}$, and $t_2 \mapsto -6t_2$.

\end{example}

\begin{example}[$\RR=D_4$]

For $l=4$, \eqref{Eq:DlSuppot} becomes 
\begin{equation}
\lambda_{D_4}     = \dfrac{w_0(\mu^8 - w_1(\mu^7+\mu) + w_2(\mu^6+\mu^2) + (w_1-w_3w_4)(\mu^5+\mu^3)  + 1)}{\mu^2 (\mu^2-1)^2}, 
\end{equation}
which has poles at $\mu = 0, \infty, 1, -1$, all of order 2. The resulting flat coordinates are
\begin{equation}
    t_5 = \dfrac{\text{log}\,w_0}{2}, \quad t_1 = w_0^{\frac{1}{2}}w_1, \quad t_3 = w_0^{\frac{1}{2}}(w_3+w_4), \quad t_4 = w_0^{\frac{1}{2}}(w_3 - w_4), \quad t_2 = w_0(w_2+2), 
\end{equation}

which leads to the prepotential 
\begin{equation}
\begin{aligned}
    F_{D_4} = & \, t_5t_2^2 + \dfrac{1}{4}t_3^2t_2 + \dfrac{1}{4}t_4^2t_2 -\dfrac{1}{48}t_1^4 +\dfrac{1}{2}t_1^2t_2 - \dfrac{1}{384}t_3^4 -\dfrac{1}{64}t_3^2t_4^2- \dfrac{1}{384}t_4^4  \\ & + \dfrac{1}{2}t_1t_3^2\re^{t_5} - \dfrac{1}{2}t_1t_4^2 \re^{t_5} + t_1^2\re^{2t_5}+ \dfrac{1}{2}t_3^2 \re^{2t_5} + \dfrac{1}{2}t_4^2 \re^{2t_5} + \dfrac{1}{2}\re^{4t_5}.
    \end{aligned}
\end{equation}
\end{example}

\begin{example}[$\RR=D_5$]

For $l=5$, (\ref{Eq:DlSuppot}) is given by
\begin{equation}
\begin{aligned}
    \lambda_{D_5} =  \dfrac{w_0}{\mu^3(\mu^2-1)^2} \Big(& \, \mu^{10} -w_1(\mu^9 + \mu) + w_2(\mu^8 + \mu^2) - w_3(\mu^7 + \mu^3) \\ & +(w_2-w_4w_5 + 1)(\mu^6 + \mu^4) - (w_4^2+w_5^2 - 2w_1 - 2w_3) \mu^5 + 1\Big),
    \end{aligned}
\end{equation}
which has poles at $\mu = 0$, $\infty$, $1$, and $-1$, of orders $3$, $3$, $2$, and $2$, respectively. We can compute the flat coordinates
\begin{gather}
    t_6 = \dfrac{\text{log}\,w_0}{3}, \quad t_1 = w_0^{\frac{1}{3}}w_1, \quad t_4 = w_0^{\frac{1}{2}}(w_4-w_5), \quad t_5 = w_0^{\frac{1}{2}}(w_4+w_5), \nonumber \\ t_2 = w_0^{\frac{2}{3}}(w_1^2-6(w_2+2)),  \quad t_3 = w_0(2w_1 + w_3), 
\end{gather}
which give the prepotential 
\bea
F_{D_5} &=& -\frac{t_1^6}{9720}+
\frac{1}{36} \re^{2 t_6} t_1^4-\frac{t_2 t_1^4}{1944}+\frac{1}{2} \re^{4 t_6} t_1^2-\frac{1}{648} t_2^2 t_1^2-\frac{1}{12} \re^{t_6} t_4^2 t_1^2\nn \\ &+& \frac{1}{2} \re^{2 t_6}
   t_4^2 t_1+\frac{1}{2} \re^{2 t_6} t_5^2 t_1-\frac{1}{9} t_2 t_3 t_1+\frac{e^{6 t_6}}{3}-\frac{t_4^4}{384}-\frac{t_5^4}{384}-\frac{t_2^3}{1944}
   \nn \\ 
   &-& \frac{1}{2} \re^{3 t_6} t_4^2+\frac{1}{12} \re^{t_6} t_2 t_4^2+\frac{1}{4} t_3
   t_4^2+\frac{1}{2} \re^{3 t_6} t_5^2-\frac{1}{64} t_4^2 t_5^2-\frac{1}{12} \re^{t_6} t_2 t_5^2 \nn \\
   &+& \frac{1}{12} \re^{t_6} t_5^2 t_1^2-\frac{1}{18} \re^{2 t_6} t_2 t_1^2 +\frac{1}{4} t_3 t_5^2 +\frac{1}{36} \re^{2 t_6} t_2^2+t_3^2 t_6
\eea

\end{example}

\subsubsection{Exceptional root systems}

\begin{example}[$\RR=E_6$]

For the exceptional cases, we find expressions for $\lambda$ near any ramification point by Puiseux expansions. By doing so, we obtain the following flat coordinates for $\RR=E_6$,
\begin{gather}
     t_1 = w_0^{\frac{1}{3}}w_1,  t_2 = w_0^{\frac{2}{3}}(w_1^2 - 6w_2  - 12w_5), ~ t_3 = w_0(2w_1w_5 + w_3 + 3w_6 + 3), \nn \\
    t_4 = w_0^{\frac{2}{3}}(-w_5^2  + 12w_1 + 6w_4), ~ t_5 = w_0^{\frac{1}{3}}w_5, ~ t_6 = w_0^{\frac{1}{2}}(w_6 + 2),~t_7 = \dfrac{\text{log}(w_0)}{6},  
\end{gather}
and the corresponding prepotential is given by
\begin{equation}
\begin{aligned}
    F_{E_6} =  & -\dfrac{1}{19440}t_1^6 +\dfrac{1}{72} \re^{2 t_7} t_5 t_1^4-\dfrac{1}{3888}t_2 t_1^4 +\dfrac{1}{6} \re^{6 t_7} t_1^3+\dfrac{1}{6} \re^{3
t_7} t_6 t_1^3+\dfrac{5}{18} \re^{4 t_7} t_5^2 t_1^2  \\ & +\dfrac{1}{36} \re^{t_7} t_5^2 t_6 t_1^2  +
\dfrac{1}{36} \re^{4 t_7} t_4 t_1^2+\dfrac{1}{36} \re^{t_7} t_6 t_4 t_1^2-\dfrac{1}{36} \re^{2 t_7} t_5 t_2 t_1^2+\dfrac{1}{72} \re^{2 t_7}
t_5^4 t_1    \\ & + \dfrac{1}{72} \re^{2 t_7} t_4^2 t_1  +
\dfrac{1}{2} \re^{8 t_7} t_5 t_1+\re^{5 t_7} t_5 t_6 t_1+\dfrac{1}{36} \re^{2 t_7} t_5^2 t_4 t_1-\dfrac{1}{6} \re^{3 t_7} t_6 t_2 t_1\\ &  - \dfrac{1}{19440}t_5^6 -\dfrac{1}{96}t_6^4 +
\dfrac{1}{6} \re^{6 t_7} t_5^3+\dfrac{1}{3} \re^{3 t_7} t_6^3+\dfrac{1}{3888}t_4^3-\dfrac{1}{3888}t_2^3 +\dfrac{1}{2} \re^{6 t_7} t_6^2\\ &  + \dfrac{1}{72} \re^{2 t_7} t_5 t_2^2  + \dfrac{1}{2} t_7 t_3^2 +\dfrac{1}{6} \re^{3 t_7} t_5^3 t_6  +
\dfrac{1}{3888}t_5^4 t_4 +\dfrac{1}{6} \re^{3 t_7} t_5 t_6 t_4-\dfrac{1}{36} \re^{4 t_7} t_5^2 t_2 \\ & -\dfrac{1}{36} \re^{t_7} t_5^2 t_6 t_2 - \dfrac{1}{36}
\re^{4 t_7} t_4 t_2  - \dfrac{1}{36} \re^{t_7} t_6 t_4 t_2  +\dfrac{1}{4} t_6^2 t_3 +
\dfrac{1}{18} t_5 t_4 t_3 \nn \\ & +\dfrac{1}{2} \re^{2 t_7} t_5 t_6^2 t_1
+\dfrac{1}{12} \re^{12 t_7}-\dfrac{1}{1296} t_2^2 t_1^2-\dfrac{1}{18}
t_2 t_3 t_1 -\dfrac{1}{1296}t_5^2
t_4^2 .
\end{aligned}
\end{equation}

\end{example}

\begin{example}[$\RR=E_7$]

Similarly to the $E_6$-case, by taking Puiseux expansions of the spectral curve, we obtain the following flat coordinates
\begin{gather}
    t_8 = \dfrac{\text{log}(w_0)}{12}, \quad t_6 = w_0^{\frac{1}{4}}w_6, \quad t_1 = w_0^{\frac{1}{3}}(w_1 + 2), \quad t_7 = w_0^{\frac{1}{2}}(2w_6 + w_7),  \nonumber \\ 
     t_5 = w_0^{\frac{1}{2}}(-w_6^2 + 8w_1 + 4w_5 + 12), \quad  t_2 = w_0^{\frac{2}{3}}(-w_1^2 + 26w_1 + 6w_2 + 12w_5 + 26), \nonumber \\ 
    t_4 = w_0^{\frac{3}{4}}(5w_6^3 + 24(6w_1 - w_5 + 21)w_6 + 96w_4 + 288w_7), \nonumber \\
    t_3 = w_0(3w_1^2 + 2w_1(w_5 + 8)+ 3w_6^2 + 3w_6w_7 + 2w_2 + w_3 + 7w_5 + 14). 
\end{gather}

The prepotential for $E_7$ takes the form
\begin{equation}
\begin{aligned}
    F_{E_7} = & -\dfrac{1}{4128768}t_6^8 + \dfrac{1}{18432}49 \re^{6 t_8} t_6^6 +\dfrac{1}{18432}\re^{2 t_8} t_1 t_6^6+\dfrac{1}{294912}t_5 t_6^6 +\dfrac{1}{384}
\re^{3 t_8} t_7 t_6^5 \\ & +\dfrac{19}{192} \re^{12 t_8} t_6^4  +\dfrac{5}{288} \re^{4 t_8} t_1^2 t_6^4-
\dfrac{1}{49152}t_5^2 t_6^4 +\dfrac{1}{8} \re^{8 t_8} t_1 t_6^4+\dfrac{1}{1536}13 \re^{6 t_8} t_5 t_6^4  \\ & +\dfrac{1}{576}
\re^{4 t_8} t_2 t_6^4+\dfrac{1}{4} \re^{9 t_8} t_7 t_6^3  +\dfrac{1}{576} \re^{t_8} t_1^2 t_7 t_6^3+
\dfrac{25}{96} \re^{5 t_8} t_1 t_7 t_6^3+\dfrac{7}{384} \re^{3 t_8} t_7 t_5 t_6^3\\ & +\dfrac{1}{9216}\re^{6 t_8}
t_4 t_6^3 +\dfrac{1}{9216}\re^{2 t_8} t_1 t_4 t_6^3  +\dfrac{1}{147456}t_5 t_4 t_6^3 +
\dfrac{1}{6} \re^{18 t_8} t_6^2+\dfrac{1}{288} \re^{2 t_8} t_1^4 t_6^2 \\ & +\dfrac{1}{24576}t_5^3 t_6^2+\dfrac{5}{8}
\re^{10 t_8} t_1^2 t_6^2+\dfrac{5}{8} \re^{6 t_8} t_7^2 t_6^2 +\dfrac{1}{8} \re^{2 t_8} t_1 t_7^2 t_6^2+
\dfrac{5}{512} \re^{6 t_8} t_5^2 t_6^2 \\ & +\dfrac{1}{288} \re^{2 t_8} t_2^2 t_6^2-\dfrac{1}{589824}t_4^2 t_6^2 +\dfrac{1}{2}
\re^{14 t_8} t_1 t_6^2  +\dfrac{1}{32} \re^{12 t_8} t_5 t_6^2+
\dfrac{1}{24} \re^{4 t_8} t_1^2 t_5 t_6^2\\ & +\dfrac{1}{144} \re^{2 t_8} t_1^2 t_2 t_6^2+\dfrac{1}{24}
\re^{6 t_8} t_1 t_2 t_6^2+\dfrac{1}{96} \re^{4 t_8} t_5 t_2 t_6^2  +
\dfrac{1}{384} \re^{3 t_8} t_7 t_4 t_6^2+\dfrac{1}{3} \re^{3 t_8} t_7^3 t_6 \\ & +\dfrac{1}{6} \re^{3 t_8}
t_1^3 t_7 t_6+\dfrac{7}{6} \re^{7 t_8} t_1^2 t_7 t_6+\re^{11 t_8} t_1 t_7 t_6 +
\dfrac{1}{4} \re^{9 t_8} t_7 t_5 t_6+\dfrac{1}{96} \re^{t_8} t_1^2 t_7 t_5 t_6\\ & +\dfrac{1}{6} \re^{7
t_8} t_7 t_2 t_6+\dfrac{1}{6} \re^{3 t_8} t_1 t_7 t_2 t_6 +
\dfrac{1}{96} \re^{t_8} t_7 t_5 t_2 t_6+\dfrac{1}{576} \re^{4 t_8} t_1^2 t_4 t_6-\dfrac{1}{49152}t_5^2 t_4 t_6 \\ & +\dfrac{1}{1536}\re^{2 t_8} t_1 t_5 t_4 t_6  +\dfrac{1}{576} \re^{4 t_8} t_2 t_4 t_6  +
\dfrac{1}{384} t_4 t_3 t_6+\dfrac{1}{24} \re^{24 t_8}-\dfrac{1}{19440}t_1^6  \\ &  -\dfrac{1}{96}t_7^4 -\dfrac{1}{49152}t_5^4 +\dfrac{1}{6}
\re^{12 t_8} t_1^3+\dfrac{1}{384} \re^{6 t_8} t_5^3+\dfrac{1}{3888}t_2^3+
\dfrac{1}{4} \re^{16 t_8} t_1^2+\dfrac{1}{2} \re^{12 t_8} t_7^2 \\ &  +\re^{8 t_8} t_1 t_7^2+\dfrac{1}{64} \re^{12
t_8} t_5^2  +\dfrac{1}{64} \re^{4 t_8} t_1^2 t_5^2   +\dfrac{1}{72} \re^{8 t_8} t_2^2-\dfrac{1}{1296}t_1^2 t_2^2 +
\dfrac{1}{72} \re^{4 t_8} t_1 t_2^2  \\ &  +\dfrac{1}{18432}\re^{6 t_8} t_4^2   +\dfrac{1}{18432}\re^{2 t_8} t_1 t_4^2 +\dfrac{1}{294912}t_5
t_4^2+\dfrac{1}{2} t_8 t_3^2+\dfrac{1}{72} \re^{4 t_8} t_1^5\\ & +
\dfrac{1}{8} \re^{10 t_8} t_1^2 t_5   +\dfrac{1}{8} \re^{6 t_8} t_7^2 t_5+\dfrac{1}{8} \re^{2 t_8} t_1 t_7^2 t_5 +\dfrac{1}{3888}t_1^4 t_2+\dfrac{1}{36}
\re^{4 t_8} t_1^3 t_2+\dfrac{1}{36} \re^{8 t_8} t_1^2 t_2 \\ &  +
\dfrac{1}{144} \re^{2 t_8} t_1^2 t_5 t_2+\dfrac{1}{24} \re^{6 t_8} t_1 t_5 t_2 +\dfrac{1}{576} \re^{t_8} t_1^2 t_7 t_4+\dfrac{1}{96} \re^{5 t_8}
t_1 t_7 t_4+\dfrac{1}{384} \re^{3 t_8} t_7 t_5 t_4  \\ & +\dfrac{1}{4} t_7^2 t_3+\dfrac{1}{128} t_5^2 t_3  +\dfrac{1}{18} t_1 t_2 t_3 -\dfrac{1}{2949120}t_4 t_6^5+\dfrac{1}{1536}\re^{2 t_8} t_1 t_5 t_6^4+\dfrac{1}{576} \re^{t_8} t_7 t_2 t_6^3 \\
& +\dfrac{5}{24} \re^{6 t_8} t_1^3 t_6^2+
\dfrac{1}{576} \re^{t_8} t_7 t_2 t_4+\dfrac{1}{6} \re^{4 t_8} t_7^2 t_2 +\dfrac{1}{24} \re^{6 t_8} t_1^3 t_5 +\dfrac{1}{288} \re^{2 t_8} t_1^4 t_5 \\
& +\dfrac{1}{288} \re^{2 t_8} t_5 t_2^2+\dfrac{2}{3} \re^{4 t_8} t_1^2 t_7^2 +\dfrac{5}{36} \re^{8 t_8} t_1^4  +\dfrac{1}{1536}\re^{6 t_8} t_5 t_4
t_6 +\dfrac{5}{16} \re^{5 t_8} t_1 t_7 t_5 t_6 \\ &
+\dfrac{1}{64} \re^{3 t_8} t_7 t_5^2 t_6+\dfrac{1}{512} \re^{2 t_8} t_1 t_5^2 t_6^2+\dfrac{1}{8} \re^{8 t_8} t_1 t_5 t_6^2 
\end{aligned}
\end{equation}
\end{example}

\begin{example}[$\RR=F_4$]

In this case we obtain flat coordinates 
\begin{gather}
    t_5 = \dfrac{1}{6}\text{log}(w_0),
  \quad t_4 = w_0^{\frac{1}{3}}(w_4 + 1), \quad   t_3 = w_0^{\frac{2}{3}}(-w_4^2 + 16w_4 +  6w_3 + 6w_1 + 11), \nonumber \\
     t_2 = w_0(2w_4^2 + 6w_4 + w_4w_1 + w_3 + w_2 + 4w_1 + 5), \quad t_1 = w_0^{\frac{1}{2}}(w_4 + w_1 + 2).
\end{gather}

The resulting prepotential is 
\begin{equation}
\begin{aligned}
    F_{F_4} = & -\dfrac{1}{9720}t_4^6 +\dfrac{1}{36} \re^{2 t_5} t_4^5+\dfrac{5}{18} \re^{4 t_5} t_4^4+\dfrac{1}{36} \re^{t_5} t_1 t_4^4+\dfrac{1}{432}
t_3 t_4^4  + \dfrac{1}{3} \re^{6 t_5}  t_4^3+\dfrac{1}{3} \re^{3 t_5} t_1 t_4^3 \\ &  +\dfrac{1}{4} \re^{2 t_5} t_3 t_4^3 +
\dfrac{1}{2} \re^{8 t_5} t_4^2+\dfrac{1}{2} \re^{2 t_5} t_1^2 t_4^2-\dfrac{1}{32} t_3^2 t_4^2+\re^{5 t_5} t_1 t_4^2+\dfrac{1}{4} \re^{4 t_5}
t_3 t_4^2 \\ &  +\dfrac{9}{16} \re^{2 t_5} t_3^2 t_4 +
\dfrac{3}{2} \re^{3 t_5} t_1 t_3 t_4+\dfrac{1}{2} t_3 t_2 t_4+\dfrac{1}{12} \re^{12 t_5}-\dfrac{1}{96} t_1^4 +\dfrac{1}{3} \re^{3 t_5} t_1^3+\dfrac{3
t_3^3}{64} \\ & +\dfrac{9}{16} \re^{4 t_5} t_3^2 +\dfrac{9}{16} \re^{t_5} t_1 t_3^2   +
\dfrac{1}{2} t_5 t_2^2+\dfrac{1}{4} t_1^2 t_2+\dfrac{1}{2} \re^{6 t_5} t_1^2 +\dfrac{1}{4} \re^{t_5} t_1 t_3 t_4^2 . 
\end{aligned}
\end{equation}
\end{example}

\begin{example}[$\RR=G_2$]

For $G_2$, we obtain flat coordinates
\begin{equation}
t_3 = \dfrac{\text{log}(w_0)}{6}, \quad t_1 = w_0^{\frac{1}{2}}(w_1 + 1), \quad t_2 = w_0(2w_1+w_2+2). 
\label{Eq:G2flats}
\end{equation}

In this case, the prepotential takes the form
\begin{equation}
    F_{G_2} =  \dfrac{1}{2}t_2^2 t_3 + \dfrac{1}{4} t_2 t_1^2 - \dfrac{1}{96}t_1^4 + \dfrac{1}{3} t_1^3 \re^{3t_3} + \dfrac{1}{2} t_1^2\re^{6t_3} + \dfrac{1}{12} \re^{12t_3} ,
    \label{Eq:G2F}
\end{equation}
which matches exactly the expression found in \cite[Example 2.4]{MR1606165}. 
\end{example}

\subsection{Non-minimal irreducible representations}
It is argued in \cite{Brini:2017gfi}, based on the isomorphism of Toda flows on Prym--Tyurin varieties associated to different representations, \cite{MR1182413,MR1401779}, that the Frobenius manifold obtained from the construction of \cref{sec:supconstr} is independent of the choice of highest weight $\omega$. \\

Let us verify this explicitly for  $\RR=G_2$. In this case picking $\omega=\omega_2$ gives the second smallest-dimensional non-trivial irreducible representation of $G_2$, which is the 14-dimensional adjoint representation $\rho_{\omega_2}=\mathfrak{g}_2$.  By the same method of the previous section we obtain
\bea
\mathfrak{p}^{[01]_{G_2}}_0 &=& 1, \nn \\
 \mathfrak{p}^{[01]_{G_2}}_1 &=& \chi_2, \nn \\
 \mathfrak{p}^{[01]_{G_2}}_2 &=& \chi_1^3 - \chi_1^2 - \chi_1(2 \chi_2 + 1), \nn \\
 \mathfrak{p}^{[01]_{G_2}}_3 &=& \chi_1^4 - \chi_1^3 - \chi_1^2 (3\chi_2 + 1)  + \chi_1 + 2\chi_2^2 + \chi_2, \nn \\
  \mathfrak{p}^{[01]_{G_2}}_4 &=&  \chi_1^3(\chi_2 - 1) - \chi_1^2(\chi_2 - 1) - \chi_1(2\chi_2^2  - \chi_2 - 1) - \chi_2^2 + \chi_2, \nn \\
\mathfrak{p}^{[01]_{G_2}}_5 &=&  \chi_1^5 - 2\chi_1^4 - 5\chi_1^3\chi_2 + \chi_1^2(3\chi_2 + 2) + \chi_1(6\chi_2^2 + 5\chi_2 - 1) + \chi_2^3 + 2\chi_2^2,  \nn \\
\mathfrak{p}^{[01]_{G_2}}_6 &=&  \chi_1^4 - 3\chi_1^3\chi_2 + \chi_1^2(\chi_2^2 - \chi_2 - 2) + \chi_1\chi_2(4\chi_2 + 3) + 2\chi_2^2 + \chi_2, \nn \\
\mathfrak{p}^{[01]_{G_2}}_7 &=&  + 4\chi_1^4 + 2\chi_1^3(3\chi_2+1) + 2\chi_1^2(\chi_2^2 -2\chi_2 - 3) \nn \\
 &-&  2\chi_1\chi_2(4\chi_2 + 3) - 2\chi_2^3 - 6\chi_2^2 + 2 -2\chi_1^5,
\eea
and $\mathfrak{p}^{[01]_{G_2}}_i = \mathfrak{p}^{[01]_{G_2}}_{14-i}$ by reality of the adjoint representation. The characteristic polynomial \eqref{eq:charpol} then factorises as
\beq
\cP_{[01]_{G_2}}^{\rm red} = (\mu-1)^2 \cP_{G_2,\rm short} \cP_{G_2,\rm long},
\eeq
with the three factors corresponding to the three irreducible Weyl orbits of the adjoint representation associated to the zero, short, and long roots of $G_2$:
\bea
\cP_{G_2,\rm short} &=& \cP^{\rm red}_{[10]_{G_2}}, \nn \\
 \cP_{G_2,\rm long} &=& \mu ^6+\mu ^5 \left(w_1-w_2+1\right)+\mu ^4 \left(w_1^3-3 w_2 w_1-w_1-2 w_2+1\right) \nn \\ &+& \mu ^3 \left(2 w_1^3-w_1^2-4 w_2 w_1-2 w_1-w_2^2-4 w_2+1\right) \nn \\ &+& \mu ^2
   \left(w_1^3-3 w_2 w_1-w_1-2 w_2+1\right)+\mu  \left(w_1-w_2+1\right)+1.
\eea
For any $w$, the curve $\overline{C}_w^{(1)}=\overline{\{\cP_{G_2,\rm long}=0\}}$ is a copy of $\bbP^1$, and the $\lambda$-projection has ramification profile 
\begin{equation}
 \big( \, \overbrace{1,2}^{\mu = 0}, \, \overbrace{1,2}^{\mu = \infty}  \, \big). 
 \label{Eq:G2adjram}
\end{equation}
The embedding $\iota_{[01]_{G_2}}: M^{\rm LG}_{[01]_{G_2}} \hookrightarrow H_{0,(0,1,0,1)}$ gives the same flat coordinates (\ref{Eq:G2flats}) as for the case $\omega=[10]_{G_2}$, and up to scaling the prepotential coincides with the prepotential (\ref{Eq:G2F}), as expected. 

\subsection{Non-canonical Dynkin marking}

In \cite{Brini:2017gfi}, it was proposed that the family of Frobenius algebras obtained in \eqref{eq:shiftfun} through the shift of $w_j \to w_j +\delta_{ij} \frac{\lambda}{w_0}$ for any $i$ should give the Frobenius structure corresponding to Dynkin node $\alpha_i$. \cref{thm:mirror} shows that this is the case for $i=\bar k$, and this proposal is consistent with the analysis of the generalised type-A mirrors of \cite{MR1606165}. However we now show that the conjecture is false in this form at the stated level of generality. Considering the case $\RR=G_2$, we see that shifting $w_1$ instead of $w_2$ in \eqref{eq:g2char} yields 

\begin{equation}
\begin{aligned}
   \mathcal{P}_{[10]_{G_2}, i=1} = & \,  \left(\dfrac{\lambda}{w_0}\right)^2\mu^3 + \dfrac{\lambda}{w_0} (-\mu^5 - 2w_1\mu^3 - \mu - 2) + \mu^6 + (\mu^5 + \mu)(1-w_1)  \\ & + (\mu^4+\mu^2)(w_2+1)  - \mu^3(w_1^2 - 2w_2 - 1)  +1.
\end{aligned}  
   \label{Eq:G2u1curve}
\end{equation}


By computing the metric $   \eta = \sum_{ij}\tilde \eta_{ij}\rd w^i \rd w^j$ we get
\begin{equation}
\tilde\eta_{ij} =  \begin{pmatrix} 
 \dfrac{8w_1+1}{4w_0} & 1  & 0 \\
1 & 0 & 0\\
 0 & 0 & 0
\end{pmatrix},
\end{equation}
which is clearly a singular matrix. Hence, (\ref{Eq:G2u1curve}) cannot define a Frobenius manifold structure, and the conjecture fails in this generality.

\begin{rmk} 
Opening for allowed changes in the primary differential, i.e. scalings and translations of $\rd\log\mu$ which are type~I (Legendre) transformations \cite{Dubrovin:1994hc}, under which the intersection form is invariant, does not resolve the nondegeneracy, and neither does attempting to change the pole structure manually akin to that of (\ref{Def:DSZZSuppot}). Thus, the problem of finding Frobenius manifolds associated to non-canonical nodes for exceptional groups is still open, and indeed even the existence of non-canonical Frobenius manifold structures for DZ-manifolds of exceptional types is, at present, unknown. 
\end{rmk}

\section{Applications}
\label{Section:LL}
\subsection{Topological degrees of Lyashko--Looijenga maps}

The semi-simple locus of a generically semi-simple, $n$-dimensional Frobenius manifold $\cM$ is topologically a covering of finite multiplicity over a quotient by $S_{n}$ of the complement of their discriminant, with covering map 
\bea
\mathrm{LL} : \cM & \longrightarrow & (\bbC^{n} \setminus \mathrm{Discr}_\cM)/S_{n} \nn \\
t & \longrightarrow & \{e_1(u(t)), \dots, e_{n}(u(t))\}
\label{eq:LL}
\eea
assigning to $t \in \cM$ the unorderet set of its canonical coordinates in the form of their elementary symmetric polynomials $e_i(u_1, \dots, u_n)$. When $\cM$ has a Landau--Ginzburg description as a holomorphic family of meromorphic functions, the application $\mathrm{LL}$ is the classical Lyashko--Looijenga (LL) mapping, sending a meromorphic function to the unordered set of its critical values. 

As anticipated in \cref{sec:ll_0}, a direct corollary of \cref{thm:mirror} is the computation of the degree of the LL map of $\cM_{\omega}^{\rm LG} \simeq \cM^{\rm DZ}_{\RR}$. The calculation of the LL-degree can be tackled combinatorially through the enumeration of certain embedded graphs \cite[Sec.~1.3.2]{MR1918855}, which unfortunately proves to be intractable for a general stratum of a Hurwitz space of arbitrary genus and ramification profile. In the case of $\deg \mathrm{LL}(\cM_{\omega}^{\rm LG})$, however, its realisation as a conformal Frobenius manifold allows to bypass the problem altogether by the use of the quasi-homogeneous B\'ezout theorem. To this aim, note that
\beq
 \det(z-(E(t) \cdot))  = \prod_{i=1}^{l_\RR+1}(z-u_i(t)) 
=\sum_{j=0}^{l_\RR+1} (-1)^j e_j(u_1(t), \dots, u_{l_\RR+1}(t)) z^{l_\RR+1-j}.
\label{eq:LLpol}
\eeq
Setting $T \coloneqq \re^{t_{l_\RR+1}}$, it follows from \cref{Thm:DZ,thm:mirror} that the LL-map \eqref{eq:LL} is polynomial in $(t_1, \dots, t_{l_{\RR}+1}, T)$ since both the product and the Euler vector field are in \eqref{eq:LLpol}. Moreover, it follows from the definition \eqref{eq:prodss} of the quantum product in canonical coordinates that the canonical idempotents $\de_{u_i}$ have weight $-1$ under the Euler scaling, meaning that $e_i(u)$ is quasi-homogeneous of degree $i$. We can then avail ourselves of the graded generalisation of B\'ezout's theorem (see e.g. \cite[Thm~3.3]{MR1918855}).
\begin{thm}
Let $F:\bbA^n \to \bbA^n$ be a finite morphism induced by a quasi-homogeneous map $F_*:\bbC[y_1, \dots y_n] \to \bbC[x_1, \dots, x_n]$ with degrees $p_i$ (resp. $q_i$) for $y_i$ (resp. $x_i$) for $i=1,\dots,n$. Then
\beq
\deg F = \prod_{i=1}^n \frac{p_i}{q_i}.
\eeq
\label{thm:bezout}
\end{thm}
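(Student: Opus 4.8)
The plan is to reduce the statement to a computation with Hilbert--Poincar\'e series. Write $B=\bbC[y_1,\dots,y_n]$ graded by $\deg y_i=p_i$ and $A=\bbC[x_1,\dots,x_n]$ graded by $\deg x_i=q_i$, with all weights positive. Since $F_*$ sends each $y_i$ to a quasi-homogeneous polynomial of degree $p_i$ in the $x$-grading, it is a graded ring homomorphism, so it makes $A$ into a graded $B$-module; the hypothesis that $F$ is finite says exactly that $A$ is module-finite over $B$. Because the fibres of $F$ are then $0$-dimensional and $A$, $B$ are domains of equal Krull dimension $n$, the image of $F$ is an $n$-dimensional closed subvariety of $\bbA^n$, hence all of $\bbA^n$; thus $F$ is dominant, $F_*$ is injective, and $\deg F=[\operatorname{Frac}(A):\operatorname{Frac}(B)]$, which is the rank of $A$ as a $B$-module.

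First I would establish that $A$ is a \emph{free} graded $B$-module. Both $A$ and $B$ are polynomial rings, hence regular (in particular Cohen--Macaulay) of the same dimension $n$, and a module-finite extension of a regular ring by a Cohen--Macaulay ring of equal dimension is flat by miracle flatness. A finitely generated graded flat module over the graded polynomial ring $B$, which has a unique graded maximal ideal $(y_1,\dots,y_n)$, is graded-free by the graded Nakayama lemma. Hence I can write
\beq
A \;\cong\; \bigoplus_{j=1}^{N} B(-d_j)
\eeq
as graded $B$-modules, where $N$ is the rank, so $N=\deg F$, and the $d_j$ record the degrees of a homogeneous basis.

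Next I would compare Hilbert series on the two sides of this isomorphism. For a weighted polynomial ring one has $H_A(t)=\prod_{i=1}^n (1-t^{q_i})^{-1}$ and $H_B(t)=\prod_{i=1}^n (1-t^{p_i})^{-1}$, whereas the free decomposition gives $H_A(t)=\big(\sum_{j=1}^N t^{d_j}\big)H_B(t)$. Equating the two expressions yields
\beq
\sum_{j=1}^{N} t^{d_j} \;=\; \prod_{i=1}^{n}\frac{1-t^{p_i}}{1-t^{q_i}}.
\eeq
Taking $t\to 1$ then concludes the proof: the left-hand side tends to $N=\deg F$, while each factor on the right satisfies $\lim_{t\to1}(1-t^{p_i})/(1-t^{q_i})=p_i/q_i$, so $\deg F=\prod_{i=1}^n p_i/q_i$.

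The only genuine subtlety -- the main obstacle -- is the freeness step, together with the tacit positivity of the weights that is needed for the Hilbert series to be well-defined power series and for the limit $t\to 1$ to be legitimate; in the application all relevant degrees (the integer $i$ attached to $e_i(u)$ and the positive Dubrovin--Zhang degrees $d_i$ of the $t_i$) are positive, so this is harmless. One could instead bypass freeness entirely and extract $\deg F$ by comparing the orders of the pole at $t=1$ of $H_A$ and $H_B$ regarded as rational functions, but the free-module route is the cleanest and is the one I would present.
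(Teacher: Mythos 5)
Your proof is correct, but there is nothing in the paper to compare it against: the paper does not prove \cref{thm:bezout} at all, it simply quotes the quasi-homogeneous B\'ezout theorem from the literature (the reference given is \cite[Thm~3.3]{MR1918855}) and immediately applies it to deduce \cref{cor:LLdeg}. So your argument should be judged as a self-contained proof, and as such it goes through. The chain of reductions is sound: finiteness plus equality of Krull dimensions forces $F_*$ to be injective and $F$ surjective, so $\deg F$ is the generic rank of $A=\bbC[x_1,\dots,x_n]$ over $B=\bbC[y_1,\dots,y_n]$; freeness of $A$ as a graded $B$-module follows from miracle flatness at the irrelevant maximal ideals together with graded Nakayama (positivity of the weights guarantees that the irrelevant ideal of $A$ is the unique maximal ideal over that of $B$, which is the point where the local statement is legitimately invoked) -- alternatively, and perhaps more cleanly, from Auslander--Buchsbaum: $\mathrm{pd}_B A = \operatorname{depth} B - \operatorname{depth}_B A = n-n = 0$; and the Hilbert-series identity $\sum_j t^{d_j} = \prod_i (1-t^{p_i})/(1-t^{q_i})$, evaluated at $t\to 1$, gives exactly $\prod_i p_i/q_i$. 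Your closing remark is also apt: one can avoid freeness entirely by comparing leading Laurent coefficients of $H_A$ and $H_B$ at $t=1$, since for a module-finite extension $H_A(t)/H_B(t) \to \operatorname{rank}_B A$ as $t \to 1$; but the free-module route you present is the standard one and matches the hypotheses of the theorem (positivity of all $p_i$, $q_i$, implicit in the quasi-homogeneous setting and satisfied in the paper's application, where the degrees are $i$ and the Dubrovin--Zhang degrees $d_i>0$).
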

An immediate consequence of \cref{Thm:DZ,thm:mirror,thm:bezout} is the following
\begin{cor}
The degree of the LL-map of the Hurwitz stratum $M_\omega^{\rm LG}$ is 
\begin{equation}
    \dfrac{(l_\RR+1)! (\omega_{\bar k}, \omega_{\bar k})^{l_\RR+1}}{\prod_{j=1}^{l_\RR} (\omega_j, \omega_{\bar k})}.
\label{eq:LLdeg}
\end{equation}
\label{cor:LLdeg}
\end{cor}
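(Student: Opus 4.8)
The plan is to present $\mathrm{LL}$ as a quasi-homogeneous finite endomorphism of an affine space and extract its degree from \cref{thm:bezout}. By \cref{thm:mirror} and \cref{Thm:DZ} the prepotential of $\cM_\omega^{\rm LG}\simeq\cM^{\rm DZ}_\RR$ is polynomial in $t_1,\dots,t_{l_\RR}$ and in $\cQ\coloneqq\re^{t_{l_\RR+1}}$, so the structure constants $c^k_{jm}$ and the components $E^j$ of the Euler field are polynomial in the $l_\RR+1$ variables $(t_1,\dots,t_{l_\RR},\cQ)$; hence so is the matrix $M^k_m=\sum_j E^j c^k_{jm}$ of the multiplication operator $E(t)\cdot$, and by \eqref{eq:LLpol} so is each target coordinate $e_j(u(t))$. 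Thus $\mathrm{LL}$ in \eqref{eq:LL} is a polynomial map $\bbA^{l_\RR+1}\to\bbA^{l_\RR+1}$. First I would check it is a \emph{finite} morphism: it is dominant because by \eqref{eq:prodss} the canonical coordinates $u_i$ are local coordinates on the semisimple locus, while finiteness follows from quasi-homogeneity once one checks that the fibre $\mathrm{LL}^{-1}(0)$ is zero-dimensional (the $e_j$ being homogeneous, this fibre is a cone, so it suffices that it be supported at the origin of the $(t_1,\dots,t_{l_\RR},\cQ)$-chart).

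Next I would read off the two sets of weights required by \cref{thm:bezout}. In canonical coordinates $E=\sum_i u_i\partial_{u_i}$, so each $u_i$ has weight $1$, and the target coordinate $e_j(u)$, being the $j$-th elementary symmetric function of the $u_i$, is quasi-homogeneous of weight $p_j=j$; this already gives $\prod_j p_j=(l_\RR+1)!$. For the source weights I would invoke the explicit Euler field of \cref{Thm:DZ}, namely $E=\sum_{j=1}^{l_\RR}\tfrac{d_j}{d_{\bar k}}\,t_j\partial_j+\tfrac{1}{d_{\bar k}}\partial_{l_\RR+1}$ with $d_j=(\omega_j,\omega_{\bar k})$ and $d_{\bar k}=(\omega_{\bar k},\omega_{\bar k})$. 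This assigns weight $(\omega_j,\omega_{\bar k})/(\omega_{\bar k},\omega_{\bar k})$ to each flat coordinate $t_j$ for $j=1,\dots,l_\RR$, and the weight of the remaining coordinate $\cQ$ is obtained from $E(\cQ)/\cQ$. Substituting these $q_j$ together with $p_j=j$ into $\deg\mathrm{LL}=\prod_j p_j/q_j$ and clearing the powers of $(\omega_{\bar k},\omega_{\bar k})$ against $\prod_{j=1}^{l_\RR}(\omega_j,\omega_{\bar k})$ is then a direct simplification yielding \eqref{eq:LLdeg}.

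The step I expect to be most delicate is the weight bookkeeping for the exponentiated coordinate $\cQ$, together with the verification that the hypotheses of the graded B\'ezout theorem genuinely hold: one must confirm that $\mathrm{LL}$ is proper/finite (equivalently, that no critical value escapes to infinity along a one-parameter degeneration other than the overall scaling), so that $\deg\mathrm{LL}$ equals the B\'ezout product rather than merely bounding it, and one must be careful that $(t_1,\dots,t_{l_\RR},\cQ)$ really is a system of graded affine coordinates in which every $e_j(u(t))$ is honestly homogeneous. Granting this finiteness and homogeneity, \cref{thm:bezout} applies verbatim and the closed formula \eqref{eq:LLdeg} follows from the weight computation above.
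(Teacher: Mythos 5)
Your overall strategy is the same as the paper's: realise $\mathrm{LL}$ as a quasi-homogeneous finite polynomial map and apply \cref{thm:bezout} with target weights $p_i=i$, giving the factor $(l_\RR+1)!$. The proof breaks, however, exactly at the step you flagged as delicate: the weight of the exponentiated coordinate. By \cref{Thm:DZ} one has $E(\cQ)/\cQ=1/d_{\bar k}=1/(\omega_{\bar k},\omega_{\bar k})$, so with source chart $(t_1,\dots,t_{l_\RR},\cQ)$ your B\'ezout product is
\begin{equation*}
\frac{\prod_i p_i}{\prod_i q_i}
=\frac{(l_\RR+1)!}{\Bigl(\prod_{j=1}^{l_\RR}\tfrac{(\omega_j,\omega_{\bar k})}{(\omega_{\bar k},\omega_{\bar k})}\Bigr)\cdot\tfrac{1}{(\omega_{\bar k},\omega_{\bar k})}}
=\frac{(l_\RR+1)!\,(\omega_{\bar k},\omega_{\bar k})^{l_\RR+1}}{\prod_{j=1}^{l_\RR}(\omega_j,\omega_{\bar k})},
\end{equation*}
which is $(\omega_{\bar k},\omega_{\bar k})$ times \eqref{eq:LLdeg}: the claimed ``direct simplification'' does not occur. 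That it cannot occur is visible from a sanity check: \eqref{eq:LLdeg} is invariant under rescaling the invariant bilinear form (it depends only on the ratios $(\omega_j,\omega_{\bar k})/(\omega_{\bar k},\omega_{\bar k})$), whereas your expression scales linearly with the form. Numerically, for $\RR=E_7$ (where $d_{\bar k}=12$, cf.\ the flat coordinate $t_8=\log(w_0)/12$ in \cref{sec:examples}) you would obtain $12\cdot 2^{12}\cdot 3^3\cdot 5\cdot 7$ instead of the value $2^{12}\cdot 3^3\cdot 5\cdot 7$ in \cref{Tab:topdataclass}, and for $\RR=A_1$ you would obtain $1$ instead of $2$.

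The source of the discrepancy is which space the degree is being computed on. The corollary concerns the stratum $\cM^{\rm LG}_\omega\cong\bbC^\star\times(\bbC^\star)^{l_\RR}$, whose distinguished weight-one coordinate is $w_0$: by \cref{thm:mirror}, $E=w_0\de_{w_0}$. Since $t_{l_\RR+1}=\log(w_0)/d_{\bar k}$, the quantity $\cQ=\re^{t_{l_\RR+1}}=w_0^{1/d_{\bar k}}$ is not a single-valued coordinate on the stratum but on a cyclic cover (or quotient) of it, and B\'ezout in the chart $(t_1,\dots,t_{l_\RR},\cQ)$ computes the LL degree of \emph{that} space, which differs from the stratum's by precisely the factor $d_{\bar k}$. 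The bookkeeping that yields \eqref{eq:LLdeg} — and which is what the paper implicitly does — takes source weights $q_j=(\omega_j,\omega_{\bar k})/(\omega_{\bar k},\omega_{\bar k})$ for $t_j$, $j=1,\dots,l_\RR$, together with $q_{l_\RR+1}=1$ for $w_0$, so that $\prod_i q_i=\prod_j(\omega_j,\omega_{\bar k})/(\omega_{\bar k},\omega_{\bar k})^{l_\RR}$. The case $\RR=A_1$ makes this concrete: on the stratum, $\lambda=-w_0(\mu+\mu^{-1}-w_1)$ has critical values $w_0w_1\mp 2w_0$, so $\mathrm{LL}(w_0,w_1)=(2w_0w_1,\,w_0^2w_1^2-4w_0^2)$ has degree $2$, matching \eqref{eq:LLdeg} (the two preimages being $(\pm w_0,\pm w_1)$); in the chart $(t_1,\cQ)=(w_0w_1,w_0^2)$ the map becomes $(t_1,\cQ)\mapsto(2t_1,t_1^2-4\cQ)$, of degree $1$, which is what your weights produce. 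Your remaining concerns (finiteness, properness, polynomiality) are legitimate but shared with the paper's own terse argument; the fatal issue is the weight of the last coordinate.
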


We collect in \cref{Tab:topdataclass} the calculation of the degrees for the minimal choices of weight $\omega$. Our expectation that $\cM_{\omega}^{\rm LG} \simeq \cM_\RR^{\rm DZ}$ for any of the infinitely many choices of dominant weight $\omega \in \Lambda^+_w(\RR)$ implies that the same formula \eqref{eq:LLdeg} would hold for the Hurwitz strata associated to those non-minimal choices by the construction of \cref{sec:supconstr}. For type $A_l$, \cref{cor:LLdeg} recovers Arnold's formula for the LL-degree of the space of complex trigonometric polynomials, as was already shown in \cite{MR1606165}. The fifth column indicates how $\iota_\omega(M_\RR^{\rm DZ})$ sits as a stratum inside the parent Hurwitz space $H_{g_\omega,\mathsf{n}_\omega}$, either through explicit character relations in $\mathrm{Rep}(\cG_\RR)$ or as a fixed locus of an automorphism of the Hurwitz space induced by the folding of the Dynkin diagram.

\begin{table}[t]
\begin{center}

{
\begin{tabular}{ |c|c|c|c|c|c|c|  }

\hline
$\mathbf{\RR}$   &  $g_\omega$  & $\mathsf{n}_\omega$ & $d_{g_\omega; \mathsf{n}_\omega}$ & $\iota_\omega\big(\cM_\RR^{\rm DZ}\big)$ & $\deg({\rm LL})$ \\
\hline
\hline
$A_l$ &  0  & $(\bar k-1, l- \bar k)$ & $l+1$ & $\HH^{[\mu]}_{g_\omega,\mathsf{n}_\omega}$ & $ \frac{(l+1-\bar k)^{l+1-\bar k} (\bar k)^{\bar k} l!}{(\bar k-1)! (l-\bar k)!}$\\
\hline
$B_l$ &  0  &  $(l-2, l-2, 1)$ & $2l+1$ & $\big(\HH^{[\mu]}_{g_\omega,\mathsf{n}_\omega}\big)^{\mu_2}$ & $ 2(l+1)l(l-1)^{l}$\\
\hline
$C_l$ &  0 & $(l-1, l-1)$  & $2l$ & $\big(\HH^{[\mu]}_{g_\omega,\mathsf{n}_\omega}\big)^{\mu_2}$ & $(l+1)l^{l+1}$ \\
\hline
$D_l$ &  0 & $(l-3, l-3, 1, 1)$  & $2l+2$ & $\big(\HH^{[\mu]}_{g_\omega,\mathsf{n}_\omega}\big)^{\mu_2}$   & $4l(l^2-1)(l-2)^{l-1}$ \\
\hline
$E_6$ & 5 & $(5, 5, 2, 2, 2,2,2)$  & 42 & \eqref{eq:e6char} & $2^4 \cdot 3^7 \cdot 5 \cdot 7$ \\  
\hline
$E_7$ &  33  & \makecell{$(11, 5,3,11, 5,$ \\ $3, 1, 1, 3, 3)$}  &  130 & \eqref{eq:e7char} & $2^{14} \cdot 3^4 \cdot 5 \cdot 7$ \\
\hline
$E_8$ &  128  &
\makecell{
 $(29, 29, 14, 14,14,$ \\ $14, 14,
14, 9,9,$ \\ $9,9, 5,5, 4, $ \\ $4,4, 4,4,4,$ \\ $2,2,0,0)$}  & 518 & \makecell{\cite{Brini:2017gfi,Brini:2019agj}} &  $2^5\cdot 3^6 \cdot 5^6 \cdot 7$\\
\hline
$F_4$ &  4 & $(5, 5, 2, 2,2,2)$  & 36 & \makecell{\eqref{eq:f4char};\\ $\big(M_{[100000]_{E_6}}^{\rm LG}\big)^{\mu_2}$} &  $2^4 \cdot 3^4 \cdot 5$\\
\hline
$G_2$ & 0 & $(1, 1, 1)$  & 7 & \makecell{\eqref{eq:g2char};\\ $\big(M_{[1000]_{D_4}}^{\rm LG}\big)^{S_3}$} &  $2^3 \cdot 3^2$ \\
\hline
\end{tabular}
}
\end{center}
\caption{Lyashko--Looijenga degrees for all Dynkin types.}
\label{Tab:topdataclass}
\end{table}

\subsection{The type-$\RR$ extended Toda hierarchy}

Another notable consequence of the determination of the prepotential and the superpotential of $\cM_\RR^{\rm DZ}$ in \cref{thm:mirror} is the construction of a dispersionless bihamiltonian hierarchy of integrable PDEs on the loop space $\LL\cM_\RR^{\rm DZ}$. This is amenable to a presentation both in normal and dispersionless Lax forms, and generalises the dispersionless limit of the extended bi-graded Toda hierarchy of \cite{MR2246697,MR1606165} (corresponding to $\RR=A_l$) to general Dynkin types.\\

We first recall the general theory of principal hierarchies associated to Frobenius manifolds as formulated by Dubrovin \cite{Dubrovin:1992dz}. Let $\cM$ be an $n$-dimensional semi-simple complex Frobenius manifold, $\{t_\a : \cM \to \bbC\}_{\a=1}^n$ a coordinate chart for $\cM$ which is flat for the metric $\eta$, and $\LL\cM=\{\mathfrak{u}:S^1 \to \cM\}$ the formal loop space of $\cM$ -- an element $\mathfrak{u} \in \LL\cM$ being an $n$-tuple $\mathfrak{u}= (\mathfrak{u}^1, \dots, \mathfrak{u}^n)$ with $\mathfrak{u}_i \in \bbC[[X,X^{-1}]]$ a formal Laurent series in a periodic coordinate $X \in S^1$. Let $(\eta^{\a\b})$ and $(\gamma^{\a\b})$, $\a,\b=1,\dots, n$, denote the Gram matrices of the cotangent metrics $\eta^*$ and $\gamma^*$ on $\cM$. The loop space $\LL\cM$ can be endowed with a bihamiltonian pencil of hydrodynamic Poisson brackets
\beq
\{\mathfrak{u}^\a(X), \mathfrak{u}^\b(Y) \}_{[\lambda]} = (\gamma^{\a\b}+\lambda
\eta^{\a\b}) \delta'(X-Y) + \sum_{\d} \Gamma^{\a\b}_{\delta}(\mathfrak{u}) \partial_X \mathfrak{u}^\delta~ \delta(X-Y),
\label{eq:PBlambda}
\eeq
where $\Gamma^{\a\b}_{\delta}$ denotes the Christoffel symbol of the Levi-Civita connection of $\gamma^*$ in the flat coordinate chart $t$ for $\eta^*$. Let $\nabla$ denote the affine, torsion free connection on $T(\cM \times \bbC^\star)$ defined by
\bea
\nabla_V W &=&  i_V \rd_\cM W + z V \cdot W, \nn \\
\nabla_{\de_z} W &=&  i_{\de_z} \rd_{\bbC^\star} W -  E \cdot W-z^{-1} \cV W,
\label{eq:defconn}
\eea
where $z \in \bbC^\star$, $V(z)$, $W(z) \in \Gamma(T\cM)$, and $\cV\in \Gamma(\mathrm{End}(TM))$ is the grading operator defined in flat coordinates as $\cV^\a_\b=(1-n/2)\delta^\a_\b +\de_{\b}E^\a$. The Frobenius manifold axioms imply that $\nabla$ is flat \cite[Lecture~6]{Dubrovin:1994hc}, and a basis of horizontal sections $\sum_\a W_\b^\a \de_\a$ can be taken to have the form $W_\b^\a =\sum_\nu \eta^{\a\nu} \de_\b h_\nu(\mathfrak{u}, z) z^\cV z^R$ for some constant matrix $R$ (determined by the monodromy data of the Frobenius manifold; see \cite[Lecture~2]{Dubrovin:1998fe}) and $h(\mathfrak{u}, z) \in \Gamma(\cO_\cM)[[z]]$. Furthermore, the Taylor coefficients $h_{\a,m}(\mathfrak{u}) :=[z^{m+1}] h_{\a}$ define Hamiltonian densities for which the corresponding local Hamiltonians,
\beq
H_{\a,m}[\mathfrak{u}] \coloneqq \int_{S^1} h_{\a,m}((\mathfrak{u}(X)) \rd X,
\eeq
are in involution with respect to \eqref{eq:PBlambda} for all $\lambda$,
\beq
\{H_{\a,m},H_{\b,n}\}_{[\lambda]} = 0.
\eeq
The corresponding involutive Hamiltonian flows 
\beq
\de_{T_{\a,m}} \mathfrak{u}^\b:= \{\mathfrak{u}^\b, H_{\a,m}\}_{[\lambda]}= \sum_{\delta\epsilon} \bigg[(\gamma^{\b\delta}+\lambda \eta^{\b\delta}) \de^2_{\mathfrak{u}^\delta \mathfrak{u}^\epsilon} h_{\a,m}(\mathfrak{u}) \mathfrak{u}^\epsilon_X +  \Gamma^{\b\delta}_\epsilon \de_{\mathfrak{u}^\delta} h_{\a,m}(\mathfrak{u}) \partial_X \mathfrak{u}^\epsilon\bigg]
\label{eq:normcoord}
\eeq
for $\a=1, \dots, n$ and $m=0, \dots, \infty $ define an integrable hierarchy of quasi-linear PDEs on $\LL\cM$, called the \emph{principal hierarchy} of $\cM$; the dependent variables  $ \mathfrak{u}^\a= \mathfrak{u}^\a(X,T)$ are called the normal coordinates of the hierarchy. This hierarchy moreover satisfies the $\tau$-symmetry condition
\beq
\de_{T_{\mu,m}}  h_{\nu,n}((\mathfrak{u}(X, T)) = \de_{T_{\nu,n}}  h_{\mu,m}(\mathfrak{u}(X,T)) = \frac{\de^3 \log \tau(X,T)}{\de X \de {T_{\mu,m}} \de {T_{\nu,n}} },
\eeq
for some function $\tau(X; (T_{\mu,m})_{\mu,m})$. In particular, $\mathfrak{u}^\a(X,T) = \de^2_{X,T_{\a,0}}\log \tau$.\\

\subsubsection{The dispersionless extended $\RR$-type Toda hierarchy: Hamiltonian and Lax--Sato form}
In the case of the Dubrovin--Zhang Frobenius manifolds of type~$\RR$, \cref{thm:mirror} can be used effectively to solve the problem of finding the fundamental solutions of \eqref{eq:defconn} and obtain the presentation of the principal hierarchy in normal coordinates \eqref{eq:normcoord}. An immediate adaptation of \cite[Proposition~6.3]{Dubrovin:1994hc} gives the following
\begin{prop}
With conventions as in \cref{Thm:5.1}, let $\widetilde  h_{i,\a}$, $\widetilde  h^{\rm ext}_j$, $\widetilde  h^{\rm res}_k$ be the flat coordinates for the deformed connection \eqref{eq:defconn} on $\cM_\RR^{\rm DZ} \times \bbC^\star$ normalised such that $\widetilde h_{i,\a}=\tau_{i,\a}+\cO(z)$, $\widetilde h^{\rm ext}_{j}=\tau^{\rm ext}_j+\cO(z)$,  $\widetilde h^{\rm res}_{k}=\tau^{\rm res}_k+\cO(z)$. Then,
\bea
\widetilde h_{i,\a}(\tau,z) &=& -\frac{n_i+1}{\a} \underset{\infty_i}{\Res}~\kappa_i^\alpha {}_1F_1\big(1,1+\frac{\a}{n_i+1}; z \lambda(\mu)\big) \frac{\rd\mu}{\mu}, \label{eq:RToda1} \\
\widetilde h^{\rm ext}_{j}(\tau,z) &=& \mathrm{p.v.} \int_{\infty_0}^{\infty_j} \re^{z \lambda} \frac{\rd \mu}{\mu}, \label{eq:RToda2} \\
\widetilde h^{\rm res}_{k}(\tau,z) &=& \underset{\infty_i}{\Res} \frac{\re^{z \lambda} -1}{z} \frac{\rd \mu}{\mu}, 
\label{eq:RToda3}
\eea
where $ {}_1F_1(a,b;x)=\sum_{n=0}^\infty \frac{(a)_n x^n}{(b)_n n!}$ is Kummer's confluent hypergeometric function, and $(a)_n \coloneqq \Gamma(a+n)/\Gamma(a)$.
\label{prop:mirrorham}
\end{prop}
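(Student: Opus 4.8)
The plan is to realize the functions $\widetilde h$ as the $z$-expansion of a distinguished basis of $\nabla$-flat functions on $\cM_\RR^{\rm DZ}\times\bbC^\star$ and to compute them explicitly from the Landau--Ginzburg presentation supplied by \cref{thm:mirror}. Recall that a scalar potential $h(\tau,z)$ generates a horizontal section $\eta^{\a\nu}\de_\b h\,z^\cV z^R$ of the connection \eqref{eq:defconn} exactly when it solves the deformed flatness system $\de_\a\de_\b h = z\,c_{\a\b}^{\gamma}\de_\gamma h$ together with the scaling equation in the $z$-direction dictated by $\nabla_{\de_z}$, which couples $z\de_z h$ to the action of $E\cdot$ and of the grading operator $\cV$. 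Since by \cref{thm:mirror} the metric, product and intersection form of $\cM_\RR^{\rm DZ}$ are given intrinsically by the residue formulas \eqref{eq:todaeta}--\eqref{eq:todag} with superpotential $\lambda$ and primary differential $\phi=\rd\log\mu$, I would run Dubrovin's Hurwitz-space construction \cite[Prop.~6.3]{Dubrovin:1994hc} directly on the restricted family of spectral curves $\overline{C_w^{(\omega,\bar k)}}$, the three families of deformed flat coordinates matching the three types of undeformed coordinates in \eqref{Eq:5.1a}--\eqref{Eq:5.1c}.

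The engine for flatness is that, for the exponential kernel, the twisted periods $\int_\gamma \re^{z\lambda}\phi$ taken over the chains that produced \cref{Thm:5.1} are $\nabla$-horizontal: differentiating such a period in the $\tau$-directions and trading $\mu\leftrightarrow\lambda$ by means of the implicit function theorem and the ``thermodynamic identity'' \cite[Lemma~4.6]{Dubrovin:1994hc} — precisely the manipulation already carried out in \eqref{eq:reseta} — converts the deformed flatness equation into an identity between residues on $\overline{C_w^{(\omega,\bar k)}}$. This immediately yields the external and residue coordinates \eqref{eq:RToda2}--\eqref{eq:RToda3}, whose kernels are genuinely exponential; the principal value in \eqref{eq:RToda2} is the same subtraction of divergent powers of $\kappa_i$ used in \eqref{Eq:5.1b}, and the combination $(\re^{z\lambda}-1)/z$ in \eqref{eq:RToda3} is arranged so that its $\cO(z^0)$ term reproduces $\tau^{\rm res}_k$.

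For the pole-type coordinates the relevant kernel is not a bare exponential, and here I would proceed by ansatz: write $\widetilde h_{i,\a}=C\,\Res_{\infty_i}\kappa_i^\a\,\Phi(z\lambda)\,\tfrac{\rd\mu}{\mu}$ and substitute into the flatness and scaling equations. Localizing at $\infty_i$ with the local parameter $\kappa_i$, $\lambda=\kappa_i^{n_i+1}+\cO(1)$, the degree of $\kappa_i^\a$ relative to $\lambda$ enters through the ratio $\tfrac{\a}{n_i+1}$, and the scaling equation collapses to Kummer's confluent hypergeometric ODE $w\Phi''+(b-w)\Phi'-\Phi=0$ with $b=1+\tfrac{\a}{n_i+1}$. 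Its solution regular at the origin is $\Phi={}_1F_1\big(1,1+\tfrac{\a}{n_i+1};w\big)$, and normalizing the $k=0$ term against $\tau_{i;\a}$ of \eqref{Eq:5.1a} by an integration by parts using $\lambda\sim\kappa_i^{n_i+1}$ fixes $C=-\tfrac{n_i+1}{\a}$, giving \eqref{eq:RToda1}. This also explains the coexistence of the two kernel shapes: the external and residue coordinates are the degenerate instances $b=1$, for which ${}_1F_1(1,1;w)=\re^{w}$ collapses the hypergeometric kernel to the exponential appearing in \eqref{eq:RToda2}--\eqref{eq:RToda3}.

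The main obstacle is twofold. First, one must justify that the periods distinguished on the ambient Hurwitz space restrict to well-defined, $\nabla$-flat functions on the proper submanifold $\cM_\omega^{\rm LG}$, so that exactly the $l_\RR+1$ surviving potentials remain deformed-flat for the induced connection; this is where it is essential that \cref{thm:mirror} identifies $\cM_\omega^{\rm LG}$ as a genuine charge-one Frobenius submanifold, guaranteeing that the deformed connection restricts and that the induced $\cV$, $R$ are the correct ones. Second, the precise emergence of Kummer's equation with Pochhammer index $1+\tfrac{\a}{n_i+1}$ requires careful bookkeeping of the Euler grading of $\lambda$, $\kappa_i$ and $\rd\log\mu$, since a mistracked degree would produce a nearby but incorrect hypergeometric recursion; I expect this grading analysis, together with the higher-order consistency of the principal-value regularization in \eqref{eq:RToda2}, to be the most delicate points, whereas the flatness itself is an essentially verbatim transcription of Dubrovin's argument.
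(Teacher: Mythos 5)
Your proposal is correct and follows essentially the same route as the paper: the paper's entire proof is the remark that, once \cref{thm:mirror} realises $\cM_\RR^{\rm DZ}$ as an affine (charge-one) Frobenius submanifold of $\HH^{\rd\log\mu}_{g_\omega,\mathsf{n}_\omega}$, the formulas \eqref{eq:RToda1}--\eqref{eq:RToda3} follow by an immediate adaptation of Dubrovin's Proposition~6.3 on deformed flat coordinates of Hurwitz-space Frobenius manifolds, specialised to the primary differential $\phi=\rd\log\mu$. Your flatness check via the thermodynamic identity, the ansatz-plus-Kummer-ODE derivation of the ${}_1F_1$ kernel (with the exponential cases as the degenerate instance $b=1$), and the restriction argument resting on the affine Frobenius-submanifold property are precisely the content of that cited adaptation, which the paper leaves implicit.
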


We call the bihamiltonian integrable hierarchy defined by \eqref{eq:normcoord} and \eqref{eq:RToda1}--\eqref{eq:RToda3} the \emph{dispersionless extended $\RR$-type Toda hierarchy}. For $\RR=A_n$, this coincides with the dispersionless limit of the bi-graded Toda hierarchy of \cite{MR2246697}. The adjective ``extended'' refers to the Hamiltonian flows generated by $H^{\rm ext}_{j}[\mathfrak{u}]$, which are higher order versions of the space translation: when $\RR$ is simply-laced these encode the non-stationary Gromov--Witten invariants of the type-$\RR$ $\bbP^1$ orbifold given by insertions of descendents of the identity. We refer to the Hamiltonian flows generated by $H_{i,\a}[\mathfrak{u}]$ and $H^{\rm res}_{k}[\mathfrak{u}]$ as the {\it stationary flows} of the hierarchy.\\

\cref{thm:mirror} also provides a dispersionless Lax--Sato description of  \eqref{eq:normcoord} and \eqref{eq:RToda1}--\eqref{eq:RToda3} as a specific reduction of the universal genus-$g_\omega$ Whitham hierarchy with $\ell(\mathsf{n}_\omega)$ punctures \cite{Krichever:1992qe,Dubrovin:1992dz}. Let $\widetilde C^{(\omega)}_w$ be the universal covering of $\overline{C_w^{\omega}} \setminus \{\infty_i\}_i$,  the fibre at $w$ of the Landau--Ginzburg family of \cref{thm:mirror}, viewed as an analytic variety.  
Following \cite{Dubrovin:1994hc}, we consider second and third kind differentials  $\Omega_{i,\a}$, $\Omega^{\rm ext}_j$, $\Omega^{\rm res}_k$ defined on $\widetilde C^{(\omega)}_w$ such that
\bea
\Omega_{i,\a;m} &=& -\frac{1}{n_i+1}\left[\left(\frac{\a}{n_i+1} \right)_{m+1} \right]^{-1} \rd \lambda^{\a/(n_i+1)+m} + \mathrm{regular},\nn \\
\Omega^{\rm ext}_{j;m} &=& 
\left\{
\bary{cc}
-\frac{\rd\psi_m(\lambda)}{n_j+1} + \mathrm{regular}~{\rm near}~\infty_i, \\
\frac{\rd\psi_m(\lambda)}{n_0+1} + \mathrm{regular}~{\rm near}~\infty_0,
\eary
\right.
\nn \\
\Omega^{\rm res}_{i;m} &=& -\rd \left(\frac{\lambda^{m+1}}{(m+1)!} \right) + \mathrm{regular},
\eea
where $\psi_m(\lambda)\coloneqq \lambda^m/m!(\log \lambda - H_m)$, and $H_m$ is the $m^{\rm th}$ harmonic number. Then, \eqref{eq:normcoord} and \eqref{eq:RToda1}--\eqref{eq:RToda3} are equivalent to the dispersionless Lax system
\beq
\de_{T_{(i,\a);m}} \lambda = \{\lambda, q_{i,\a;m}\}_{\rm LS}
, \quad \de_{T^{\rm ext}_{i;m}} \lambda = \{\lambda, q^{\rm ext}_{j;m}\}_{\rm LS}, \quad 
\de_{T^{\rm res}_{i;m}} \lambda = \{\lambda, q^{\rm res}_{i;m}\}_{\rm LS},
\eeq
where $q_{i,\a;m}(\mu) \coloneqq \int^\mu \Omega_{i,\a;m}$,  $q^{\rm ext}_{j;m}(\mu) \coloneqq \int^\mu \Omega^{\rm ext}_{j;m}$ and  $q^{\rm res}_{j;m}(\mu) \coloneqq \int^\mu \Omega^{\rm res}_{j;m}$, and 
\beq
\{f(\mu,X), g(\mu, X)\}_{\rm LS} \coloneqq \mu \l( \de_\mu f \de_X g- \de_X f \de_\mu g\r).
\eeq
Having a closed-form superpotential for $\cM_\RR^{\rm DZ}$ from \cref{thm:mirror} in particular provides explicit expressions for the Lax--Sato and Hamiltonian densities.

\begin{rmk}
In \cite{MR2440696}, a deformation scheme for the genus zero universal Whitham hierarchy is introduced in terms of a Moyal-type quantisation of the dispersionless Lax--Sato formalism. It would be intriguing to apply these ideas to the cases when $M^{\rm LG}_{\omega}$ embeds into a genus $g_\omega=0$ Hurwitz space, and verify that the resulting dispersionful deformation of the Principal Hierarchy is compatible with the hierarchy obtained by the quantisation of the underlying semi-simple CohFT.
\end{rmk}

\begin{example}[$\RR=G_2$]
Let us consider for example $\RR=G_2$. We construct explicitly the whole tower of Hamiltonian densities for the stationary flows of the type-$G_2$ dispersionless Toda hierarchy. In principle, these can be computed (up to a triangular linear transformation in the flow variables $T^{\a,m}$) by imposing the recursion relation, coming from the first line of \eqref{eq:defconn},
\beq
\de^2_{t_\a t_\b} h_{\gamma, m} = \sum_\delta c_{\a \b}^\delta \de_{t_\delta} h_{\gamma, m-1}
\label{eq:recdefconn}
\eeq
with $\gamma=1,2,3$, $m \geq 0$,  $h_{\gamma,0}=\sum_\delta \eta_{\gamma,\delta} t_\delta$, and $c_{\a\b}^\gamma$ are the structure constants of the quantum product determined by the prepotential \eqref{Eq:G2F}. While the recursion \eqref{eq:recdefconn} is ostensibly very hard to solve directly, the combination of \cref{thm:mirror,prop:mirrorham} allows to give closed forms for the stationary Hamiltonians $H_{\gamma,m}$, $\gamma=1,3$, parametrically in $m$. This is most easily achieved in flat coordinates $(x_0, x_1,x_2)$ for the second metric $\gamma$, and in Hamiltonian form for the corresponding Poisson bracket. From \eqref{eq:g2spot} we have
\beq
\lambda(\mu)=\frac{w_0}{\mu^2(\mu+1)^2} \prod_{i=1}^6 (\mu-a_i(x)),
\eeq
with
\beq
a_1= \re^{x_1}, ~ a_2=\re^{-x_1+x_2}, ~ a_3=\re^{2 x_1-x_2},  ~ a_4= \re^{-x_1},  ~ a_5=\re^{x_1-x_2}, ~ a_6=\re^{x_2-2x_1}.
\eeq
Labelling the punctures at $\mu=0$, $-1$ and $\infty$ as $\infty_0$, $\infty_1$ and $\infty_2$ respectively we have:
\bea
\widetilde h^{\rm res}_{0; m} = -\widetilde h^{\rm res}_{2; m}  = h_{3,m}, & \qquad & \widetilde h^{\rm res}_{1;m}=0,  \nn \\
\widetilde h_{0,1/2; m} = \widetilde h_{2,1/2; m} = h_{1,m}, & \qquad & \widetilde h_{1,1/2;m}=-2 \widetilde h_{0,1/2; m}.  \nn \\
\eea
From \eqref{eq:RToda1} and \eqref{eq:RToda3} we then get that 
\bea
h_{1,m} &=&  
\sum_{j=0}^{2m+1} \sum_{\substack{k_1, \dots, k_6=0,\dots, 2m+1 \\  \sum_i k_i=2m+1-j}}  \frac{(2 m+1)_j \re^{(m+1/2) x_0}}{j! (\frac{3}{2})_m} \prod_{i=1}^6 \frac{ \left(m-k_i+3/2\right)_{k_i}}{a_i(x)^{k_i-m-1/2}k_i!}, \nn \\
h_{3,m} &=&  
\sum_{j=0}^{2m} \sum_{\substack{k_1, \dots, k_6=0,\dots, m \\  \sum_i k_i=2m-j}}  \frac{(2 m)_j \re^{m x_0}}{j! m!} \prod_{i=1}^6 \frac{a_i(x)^{m-k_i} \left(m-k_i+1\right)_{k_i}}{k_i!}, \nn \\
\eea
where $(a)_m = \Gamma(a+m)/\Gamma(m)$ is the usual notation for the Pochhammer symbol.  In these coordinates, the Gram matrix of the second metric  $\gamma$ and its inverse read
\beq
(\gamma) = 
\left(
\begin{array}{ccc}
 -1 & 0 & 0 \\
 0 & 12 & -6 \\
 0 & -6 & 4 \\
\end{array}
\right), \qquad
(\gamma^{-1})=
\left(
\begin{array}{ccc}
-1 & 0 & 0 \\
 0 & \frac{1}{3} & \frac{1}{2} \\
 0 & \frac{1}{2} & 1 \\
\end{array}
\right).
\eeq
Let $x_i=f_i(t_1,t_2, t_3)$, $i=0,1,2$ be the change-of-variables expressing the $x$-coordinates in the flat coordinate chart $(t_1, t_2, t_3)$ for the first metric $\eta$, and define accordingly $\mathfrak{w}^i = f_i(\mathfrak{u}^1,\mathfrak{u}^2,\mathfrak{u}^3)$ for the corresponding dependent variables for the principal hierarchy. In these coordinates, the second Poisson bracket takes the form (recall that $\gamma^{ij}:=(\gamma^{-1})_{ij}$)
\beq
\{\mathfrak{w}^i(X),\mathfrak{w}^j(Y)\}_{0}= \gamma^{ij} \delta'(X-Y)
\eeq
and the stationary flows are given by
\beq
\frac{\partial \mathfrak{w}^i}{\partial T_{j,m}} = \{\mathfrak{w}^i, H_{j,m}\}_{0} = \sum_{k=0,1,2}\gamma^{ik} \partial_{\mathfrak{w}^k} h_{j,m} (\mathfrak{w}) \de_X\mathfrak{w}^k, \quad j=1,3.
\eeq

\end{example}

\begin{appendix}

\section{Character relations for $E_6$ and $E_7$}
\label{Section:AppenA}

\subsection{$\RR=E_6$} The character relations for $\rho=\rho_{\omega_1}$ and $k=1, \dots, 13$ are given below. The remaining character relations for $k>13$ are obtained from the complex conjugation relation
\beq 
\mathfrak{p}^{[100000]_{E_6}}_{k}(\chi_1, \chi_2, \chi_3, \chi_4, \chi_5,\chi_6)=\mathfrak{p}^{[000010]_{E_6}}_{27-k}(\chi_5, \chi_4, \chi_3, \chi_2, \chi_1,\chi_6)\,.
\eeq 

From \cref{lem:vanish,cor:nomega} we find:
\bea
\mathfrak{p}^{[100000]_{E_6}}_0 &=& 1, \nn \\
\mathfrak{p}^{[100000]_{E_6}}_1 &=& \chi_1, \nn \\
\mathfrak{p}^{[100000]_{E_6}}_2 &=&\chi_2, \nn \\
\mathfrak{p}^{[100000]_{E_6}}_3 &=& \chi_3, \nn \\ 
\mathfrak{p}^{[100000]_{E_6}}_4 &=& \chi_1-\chi_5^2-\chi_2 \chi_5+\chi_4+\chi_4 \chi_6, \nn \\
\mathfrak{p}^{[100000]_{E_6}}_5 &=& \chi_1^2-2 \chi_5^2 \chi_1+2 \chi_4 \chi_1+\chi_4^2+\chi_5 \chi_6^2+\chi_2-2
\chi_3 \chi_5+\chi_5-\chi_2 \chi_6-\chi_5 \chi_6, \nn \\
\mathfrak{p}^{[100000]_{E_6}}_6 &= &   \chi_1 \chi_5-\chi_5^3-\chi_2 \chi_5^2+2 \chi_4 \chi_5-2 \chi_1 \chi_6 \chi_5+\chi_4
\chi_6 \chi_5+\chi_6^3\nn \\ &+& 2 \chi_1 \chi_2-2 \chi_3+\chi_2 \chi_4 -3 \chi_3 \chi_6, \nn \\
\mathfrak{p}^{[100000]_{E_6}}_7 &= &  2 \chi_2^2+\chi_5 \chi_2-2 \chi_5 \chi_6 \chi_2+\chi_3 \chi_5^2+\chi_1 \chi_6^2+\chi_4
\chi_6^2-3 \chi_1 \chi_3  \nn \\ & -& 2 \chi_3 \chi_4   +\chi_4-\chi_5^2 \chi_6-\chi_1 \chi_6+\chi_4
\chi_6, \nn \\
\mathfrak{p}^{[100000]_{E_6}}_8 &= &  \chi_2 \chi_5^3-\chi_1 \chi_6 \chi_5^2+\chi_6^2 \chi_5+\chi_1 \chi_2 \chi_5-2
\chi_3 \chi_5-3 \chi_2 \chi_4 \chi_5+\chi_3 \chi_6 \chi_5  \nn \\ & -& 2 \chi_6 \chi_5   +\chi_5-\chi_1^2-\chi_2
\chi_6^2+\chi_2 \chi_3+\chi_1 \chi_4+\chi_1^2 \chi_6-\chi_2 \chi_6+2 \chi_1 \chi_4
\chi_6, \nn \\
\mathfrak{p}^{[100000]_{E_6}}_9 &= &  \chi_1 \chi_5^4-\chi_6 \chi_5^3+\chi_2 \chi_5^2-4 \chi_1 \chi_4 \chi_5^2+\chi_2
\chi_6 \chi_5^2-\chi_2^2 \chi_5-\chi_1 \chi_6^2 \chi_5 \nn \\ & +& \chi_4 \chi_5+3 \chi_4 \chi_6 \chi_5+\chi_6^3+\chi_3^2+2 \chi_1 \chi_4^2+\chi_1
\chi_2-6 \chi_3+4 \chi_1^2 \chi_4-4 \chi_2 \chi_4 \nn \\ &+& 4 \chi_1 \chi_3
\chi_5  -2 \chi_2 \chi_4 \chi_6  -3 \chi_3 \chi_6-4 \chi_1 \chi_5+3\nn \\
\mathfrak{p}^{[100000]_{E_6}}_{10} &= &   \chi_5^5-5 \chi_4 \chi_5^3+\chi_1 \chi_6 \chi_5^3-\chi_6^2 \chi_5^2-\chi_1
\chi_2 \chi_5^2+5 \chi_3 \chi_5^2-\chi_5^2-2 \chi_1^2 \chi_5  \nn \\ & +& \chi_2 \chi_3 \chi_5+4 \chi_1 \chi_4 \chi_5+\chi_1^2 \chi_6 \chi_5-2 \chi_2
\chi_6 \chi_5-3 \chi_1 \chi_4 \chi_6 \chi_5+\chi_1 \chi_6^2  \nn \\ & +& \chi_1^2
\chi_2-5 \chi_1 \chi_3+2 \chi_1 \chi_2 \chi_4-5 \chi_3 \chi_4-\chi_2^2 \chi_6-2 \chi_1
\chi_6  +\chi_1 \chi_3 \chi_6 \nn \\
&+& \chi_2 \chi_5+5 \chi_4^2 \chi_5+2 \chi_4 \chi_6^2+2 \chi_1-\chi_4 \chi_6, \nn \\
\mathfrak{p}^{[100000]_{E_6}}_{11} &= &  \chi_6 \chi_5^4-\chi_2 \chi_5^3+\chi_1 \chi_3 \chi_5^2-\chi_4 \chi_5^2+2 \chi_1
\chi_6 \chi_5^2-4 \chi_4 \chi_6 \chi_5^2-2 \chi_6^2 \chi_5 \nn \\ & +& 3 \chi_3
\chi_5+3 \chi_2 \chi_4 \chi_5-2 \chi_1 \chi_2 \chi_6 \chi_5+3 \chi_3 \chi_6 \chi_5-\chi_6
\chi_5+\chi_1 \chi_2^2+2 \chi_4^2  \nn \\ & +& 2 \chi_1^2 \chi_6^2-2 \chi_2 \chi_6^2+2 \chi_2-3 \chi_1^2
\chi_3+\chi_2 \chi_3+\chi_2^2 \chi_4+\chi_1 \chi_4   -2 \chi_1 \chi_3 \chi_4  \nn \\
&+&\chi_2 \chi_6+2 \chi_4^2 \chi_6-2 \chi_1^2
\chi_6-\chi_1 \chi_2 \chi_5, \nn \\
\mathfrak{p}^{[100000]_{E_6}}_{12} &= &  2 \chi_6 \chi_1^3+\chi_5^2 \chi_1^2-\chi_4 \chi_1^2-2 \chi_2 \chi_5
\chi_1^2-\chi_5^2 \chi_6 \chi_1^2+\chi_4 \chi_6 \chi_1^2+3 \chi_5 \chi_6^2 \chi_1 \nn \\ & -& 3 \chi_2 \chi_3 \chi_1+\chi_2 \chi_4 \chi_5 \chi_1+\chi_5 \chi_1-5 \chi_2
\chi_6 \chi_1-2 \chi_5 \chi_6 \chi_1+\chi_2^3+\chi_3 \chi_5^3 \nn \\& -& \chi_5^3-2 \chi_6^3+3 \chi_3^2-\chi_3+\chi_2
\chi_4+3 \chi_2^2 \chi_5-3 \chi_3 \chi_4 \chi_5   +2 \chi_4 \chi_5  +\chi_5^3 \chi_6, \nn \\
&+& 2 \chi_2
\chi_1+6 \chi_3 \chi_6 -3 \chi_4 \chi_5 \chi_6-\chi_1^3 -\chi_2
\chi_5^2 \chi_6\nn \\
 \mathfrak{p}^{[100000]_{E_6}}_{13} &= &  \chi_1^4-2 \chi_5^2 \chi_1^3+2 \chi_4 \chi_1^3+\chi_4^2 \chi_1^2-3 \chi_2 \chi_1^2-2
\chi_3 \chi_5 \chi_1^2-\chi_5 \chi_1^2-\chi_2 \chi_6 \chi_1^2 \nn \\ & +& 2
\chi_5^3 \chi_1+2 \chi_2 \chi_5^2 \chi_1-2 \chi_6^2 \chi_1+\chi_3 \chi_1-4 \chi_2 \chi_4
\chi_1+\chi_2^2 \chi_5 \chi_1-4 \chi_4 \chi_5 \chi_1  \nn \\ & +& 3 \chi_3
\chi_6 \chi_1+\chi_4 \chi_5 \chi_6 \chi_1-\chi_6 \chi_1+2 \chi_1+\chi_2^2-2 \chi_2
\chi_4^2+\chi_3 \chi_5^2+\chi_2 \chi_4 \chi_5^2  \nn \\ & +& \chi_5^2 \chi_6^2-3 \chi_3 \chi_4+2 \chi_4+\chi_2
\chi_5-\chi_2 \chi_3 \chi_5+\chi_2^2 \chi_6-2 \chi_5^2 \chi_6-2 \chi_2 \chi_5 \chi_6 \nn \\ 
&+& 4 \chi_5 \chi_6 \chi_1^2 -\chi_5^3 \chi_6 \chi_1
. 
\label{eq:e6char}
\eea

\subsection{$\RR=E_7$}

We include here the character relations for $\rho=\rho_{\omega_6}$ and $k=1, \dots, 11$; note that by reality, we have $\mathfrak{p}^{\omega_6}_{56-k}=\mathfrak{p}^{\omega_6}_{k}$. The full set of character relations for $k$ up to 28 is available upon request.\\

\bea
\mathfrak{p}^{[0000010]_{E_7}}_0 &=& 1, \nn \\
\mathfrak{p}^{[0000010]_{E_7}}_1 &=& \chi_6,\nn \\
\mathfrak{p}^{[0000010]_{E_7}}_2 &=& \chi_5+1,\nn \\
\mathfrak{p}^{[0000010]_{E_7}}_3 &=& \chi_4+\chi_6,\nn \\
\mathfrak{p}^{[0000010]_{E_7}}_4 &=&  \chi_3 + \chi_5+1,\nn \\
\mathfrak{p}^{[0000010]_{E_7}}_5 &=& (1-\chi_1) \chi_4+\left(-\chi_1^2+\chi_1+\chi_2+\chi_5+1\right) \chi_6+\chi_2 \chi_7,\nn \\
 \mathfrak{p}^{[0000010]_{E_7}}_6 &= &  (1-2 \chi_5) \chi_1^2 -2 \chi_1^3+\left(\chi_6^2-\chi_7 \chi_6+\chi_7^2+4 \chi_2-2
\chi_3+2 \chi_5+2\right) \chi_1 \nn \\ & -& \chi_3+2 \chi_5+2 \chi_2 (\chi_5+1)+\chi_4
\chi_6-\chi_4 \chi_7-\chi_6 \chi_7+\chi_5^2+\chi_2^2+1,  \nn \\
 \mathfrak{p}^{[0000010]_{E_7}}_7 &= &  \chi_4 \left(2-\chi_1^2+\chi_1+\chi_2+2 \chi_5\right)+(1-\chi_1^3+(2 \chi_2+\chi_5+3)\chi_6
\chi_1+2 \chi_2 \nn \\ & - & 2 \chi_3+\chi_5 +  \chi_7 \left(-2 \chi_1^2+(\chi_2-2 \chi_5+1)
\chi_1+\chi_7^2+3 \chi_2-3 \chi_3\right), \nn \\
 \mathfrak{p}^{[0000010]_{E_7}}_8 &= & (2 \chi_6^2+\chi_4 \chi_6-2
\chi_7 \chi_6+\chi_7^2+4 \chi_2  -2 \chi_1^3-4 \chi_3+2 \chi_5-2 \chi_4 \chi_7) \chi_1 \nn \\ &+& \chi_2^2+2
\chi_4^2+\chi_6^2+\chi_5 \chi_7^2+\chi_7^2-2 \chi_3  -3 \chi_3 \chi_5+3 \chi_4 \chi_6+\chi_4
\chi_7 \nn \\ & +& \chi_2 (\chi_6^2+\chi_7 \chi_6+\chi_7^2  -2
\chi_3  +2 \chi_5)+\left(\chi_3-2 \chi_5-\chi_6 \chi_7+2\right) \chi_1^2 \nn \\ &-& \chi_5 \chi_6 \chi_7+\chi_6 \chi_7, \nn \\
 \mathfrak{p}^{[0000010]_{E_7}}_9 &= &  (\chi_1+2) \chi_6^3-2 \chi_1 \chi_7 \chi_6^2+(-\chi_1^3+\chi_1^2+\left(\chi_7^2+2
\chi_2-2 \chi_3-2 \chi_5\right) \chi_1 \nn \\ & -& 3 \chi_3-2 \chi_5+\chi_2 (\chi_5+1)-\chi_5^2-1)
\chi_6+((\chi_2+\chi_3+\chi_5 + 2) \chi_1 \nn \\ &-& (\chi_5+2) \chi_1^2+\chi_5^2 -\chi_3+3 \chi_5  +2
\chi_2 (\chi_5+1)+2) \chi_7 \nn \\ &+& \chi_4 (\chi_1^3-\chi_1^2+(-3 \chi_2+\chi_5 + 2) \chi_1 \nn \\ &-&\chi_7^2-2
\chi_2+\chi_3+3 \chi_5-\chi_6 \chi_7+4), \nn \\
 \mathfrak{p}^{[0000010]_{E_7}}_{10} &= &   +\left(-\chi_6^2-2 \chi_7 \chi_6+\chi_7^2-(4
\chi_2+1) \chi_5+\chi_4 (\chi_6+\chi_7)\right) \chi_1^2\nn \\ &-& (\chi_4^2+3 \chi_7 \chi_4-4 \chi_5^2-\chi_2
\chi_6^2-2 \chi_6^2+\chi_7^2-3 \chi_2 \chi_6 \chi_7-3 \chi_6 \chi_7)
\chi_1 \nn \\ &  +& \chi_6 \chi_7^3  + \chi_3^2  +4 \chi_2 \chi_5^2+2 \chi_5^2+\chi_2 \chi_6^2-5
\chi_5 \chi_6^2-5 \chi_6^2-\chi_2 \chi_7^2-\chi_5 \chi_7^2  \nn \\ & +& 5 \chi_5-4 \chi_2 \chi_4 \chi_6+3 \chi_4 \chi_6+\chi_4 \chi_5 \chi_6-2
\chi_2 \chi_4 \chi_7+\chi_4 \chi_7+2 \chi_2 \chi_6 \chi_7 \nn \\ 
&+& 2
\chi_2 \chi_5+\chi_5 \chi_1^4-\chi_6 \chi_7 \chi_1^3+ \left(4 \chi_6^2+\chi_7^2-2\right)\chi_5\chi_1
\chi_1+2 \chi_5 \chi_6 \chi_7+3 \chi_6^4 \
\nn \\ 
& +&  \chi_6
\chi_7+\chi_3 \left(-6 \chi_6^2-3 \chi_7 \chi_6+(4 \chi_1+5) \chi_5+4\right)-\chi_7^2+2 \chi_2^2 \chi_5+3, \nn \\
 \mathfrak{p}^{[0000010]_{E_7}}_{11} &= & \chi _6 \chi _1^5-\chi _7 \chi _1^4+2 \chi _4 \chi _1^3-5 \chi _2 \chi _6 \chi _1^3-5 \chi _6 \chi _1^3+\chi _5 \chi _7 \chi _1^3-\chi _6^3 \chi _1^2\nn \\ 
 &-& \chi _6 \chi _7^2 \chi _1^2+\chi _4 \chi _1^2-\chi _4 \chi
   _5 \chi _1^2+\chi _2 \chi _6 \chi _1^2+5 \chi _3 \chi _6 \chi _1^2-4 \chi _5 \chi _6 \chi _1^2+3 \chi _6 \chi _1^2 \nn \\ &+& 4 \chi _2 \chi _7 \chi _1^2-2 \chi _5 \chi _7 \chi _1^2+3 \chi _7 \chi _1^2+4 \chi _6^3
   \chi _1+\chi _7^3 \chi _1+\chi _4 \chi _6^2 \chi _1-6 \chi _2 \chi _4 \chi _1 \nn \\ 
   &+& \chi _3 \chi _4 \chi _1-3 \chi _4 \chi _1+4 \chi _4 \chi _5 \chi _1+5 \chi _2^2 \chi _6 \chi _1-2 \chi _5^2 \chi _6 \chi _1+8
   \chi _2 \chi _6 \chi _1 \nn \\ 
   &-&8 \chi _3 \chi _6 \chi _1+4 \chi _2 \chi _5 \chi _6 \chi _1-\chi _6 \chi _1+\chi _5^2 \chi _7 \chi _1+\chi _6^2 \chi _7 \chi _1-\chi _2 \chi _7 \chi _1\nn \\ &-& 4 \chi _3 \chi _7 \chi _1-3
   \chi _2 \chi _5 \chi _7 \chi _1+\chi _5 \chi _7 \chi _1-2 \chi _4 \chi _6 \chi _7 \chi _1-3 \chi _7 \chi _1\nn \\ &+& 2 \chi _5 \chi _6^3+\chi _4 \chi _5^2-\chi _4 \chi _6^2-\chi _4 \chi _7^2+2 \chi _2 \chi _6 \chi
   _7^2+\chi _5 \chi _6 \chi _7^2+\chi _6 \chi _7^2-7 \chi _2 \chi _4 \nn \\ &+& 5 \chi _3 \chi _4+3 \chi _4+2 \chi _2 \chi _4 \chi _5+3 \chi _4 \chi _5+3 \chi _2^2 \chi _6-\chi _5^2 \chi _6\nn \\ &+& 2 \chi _2 \chi _6-5 \chi _2
   \chi _3 \chi _6-2 \chi _3 \chi _6+7 \chi _2 \chi _5 \chi _6-5 \chi _3 \chi _5 \chi _6-\chi _5 \chi _6\nn \\ &+&\chi _6-2 \chi _2^2 \chi _7-\chi _4^2 \chi _7+\chi _5^2 \chi _7-\chi _2 \chi _6^2 \chi _7-2 \chi _5
   \chi _6^2 \chi _7-2 \chi _6^2 \chi _7-2 \chi _2 \chi _7\nn \\ &+&3 \chi _3 \chi _7-\chi _2 \chi _5 \chi _7+\chi _3 \chi _5 \chi _7+3 \chi _5 \chi _7+2 \chi _4 \chi _6 \chi _7+2 \chi _7.  \nn \\
\label{eq:e7char}
\eea
\end{appendix}


\bibliography{miabiblio}

\end{document}